\documentclass[a4paper]{article}

\usepackage[latin1]{inputenc}  
\usepackage{graphicx}
\usepackage[pdftex,bookmarks]{hyperref}
\usepackage{amsmath}    
\usepackage{amssymb}
\usepackage{amsmath,amsthm}
\usepackage{textcomp}
\usepackage[all]{xy}

\addtolength{\oddsidemargin}{-0.5in}
\addtolength{\evensidemargin}{-1in}
\addtolength{\textwidth}{.90in}

\addtolength{\topmargin}{-.775in}
\addtolength{\textheight}{1.4in}

\newtheorem{teo}{Theorem}[section]
\newtheorem{defi}{Definition}[section]

\newtheorem{cor}{Corollary}[section]
\newtheorem{prop}{Proposition}[section]

\newtheorem{rem} {Remark}[section]

\DeclareMathOperator{\im}{im}
\DeclareMathOperator{\ind}{ind}

\DeclareMathOperator{\ck}{coker}
\DeclareMathOperator{\reg}{reg}
\DeclareMathOperator{\vol}{vol}
\DeclareMathOperator{\sing}{sing}

\title{\huge On the $L^2$-Poincar\'e duality  for  incomplete riemannian manifolds: a general construction with applications}
\author{Francesco Bei  \bigskip \\
Institute f\"ur Mathematik, Humboldt Universit\"at zu Berlin,\\ E-mail addresses: \ bei@math.hu-berlin.de\     francescobei27@gmail.com }
\date{}

\begin{document}

\maketitle
\maketitle

\begin{abstract}
Let $(M,g)$ be an  open, oriented and incomplete riemannian manifold of dimension $m$. Under some general conditions we show the existence of  a Hilbert complex $(L^2\Omega^i(M,g),d_{\mathfrak{M},i})$ such that its cohomology groups, labeled with $H^i_{2,\mathfrak{M}}(M,g)$, satisfy the following properties:
\begin{itemize}
\item $H^i_{2,\mathfrak{M}}(M,g)=\ker(d_{max,i})/\im(d_{min,i})$
\item $H^i_{2,\mathfrak{M}}(M,g)\cong H^{m-i}_{2,\mathfrak{M}}(M,g)$ (Poincar\'e duality holds)
\item There exists a  well defined and non degenerate pairing: $$H^i_{2,\mathfrak{M}}(M,g)\times H^{m-i}_{2,\mathfrak{M}}(M,g)\longrightarrow \mathbb{R},\ ([\omega],[\eta])\longmapsto \int_{M}\omega\wedge \eta$$
\item If $(L^2\Omega^i(M,g),d_{\mathfrak{M},i})$ is  a Fredholm complex then every  closed extension of the de Rham complex $(\Omega^i_c(M),d_i)$ is a Fredholm complex and,  for each $i=0,...,m$, the quotient $\mathcal{D}(d_{max,i})/\mathcal{D}(d_{min,i})$ is a finite dimensional vector space. 
\end{itemize}
\end{abstract}
\vspace{1 cm}

\noindent\textbf{Keywords}: $L^2$-cohomology, Poincar\'e duality, Incomplete riemannian manifolds, Fredholm complexes. \vspace{1 cm}

\noindent\textbf{Mathematics subject classification}:  58J10, 14F40, 14F43.

\section*{Introduction}
Poincar\'e duality is one of the best known and most important properties of the de Rham cohomology on a closed and  oriented smooth manifold $M$.  Using the pairing induced by the wedge product we have:
\begin{equation}
\label{topobirbone}
H^i_{dR}(M)\times H^{m-i}_{dR}(M)\longrightarrow \mathbb{R},\ ([\omega],[\eta])\longmapsto \int_{M}\omega\wedge \eta.
\end{equation}
Poincar\'e duality says that \eqref{topobirbone} induces an isomorphism between $H^i_{dR}(M)$ and $(H^{m-i}_{dR}(M))^*$ for all $i=0,...,m$, where $m$ is the dimension of $M$ . Besides the previous isomorphism, putting a riemannian metric $g$ on $M$ and using the results coming from Hodge theory, we have also the following  isomorphisms: 
\begin{equation}
\label{topocosmico}
H^i_{dR}(M)\cong \ker(\Delta_i)\cong \ker(\Delta_{m-i})\cong H^{m-i}_{dR}(M) 
\end{equation} 
where $\Delta_i:=d_{i-1}\circ \delta_{i-1}+\delta_i\circ d_i$ is the $i-$th Hodge Laplacian acting on $\Omega^i(M)$. 
As it is well known \eqref{topobirbone} and \eqref{topocosmico} are not longer true when $M$ is not compact.\\ In this case two natural and important variations of the de Rham cohomology  are provided by the $L^2$-de Rham cohomology and by the reduced $L^2$-de Rham cohomology. We recall briefly  that the reduced maximal $L^2$-cohomology, $\overline{H}^i_{2,max}(M,g)$, is defined as $\ker(d_{max,i})/\overline{\im(d_{max,i-1})}$ while the maximal $L^2$-cohomology, $H^i_{2,max}(M,g)$, is defined as $\ker(d_{max,i})/\im(d_{max,i-1})$ where $d_{max,i}:L^2\Omega^i(M,g)\rightarrow L^2\Omega^{i+1}(M,g)$ is the distributional extension of $d_i:\Omega^i_c(M)\rightarrow \Omega^i_c(M)$. Analogously the reduced minimal $L^2$-cohomology, $\overline{H}^i_{2,min}(M,g)$, is defined taking the quotient $\ker(d_{min,i})/\overline{\im(d_{min,i-1})}$ while the minimal $L^2$-cohomology, $H^i_{2,min}(M,g)$, is defined as $\ker(d_{min,i})/\im(d_{min,i-1})$ where $d_{min,i}:L^2\Omega^i(M,g)\rightarrow L^2\Omega^{i+1}(M,g)$ is the graph closure of $d_i:\Omega^i_c(M)\rightarrow \Omega^i_c(M)$. In the non compact setting they are an important tool and indeed they have been the subject of many studies during the last decades. In this case, as it is well known, the completeness of $(M,g)$ plays a fundamental role. When $(M,g)$ is complete, the Laplacian $\Delta_i$, with domain given by the smooth and compactly supported forms $\Omega_c^i(M)$, is an essentially self-adjoint operator on $L^2\Omega^i(M,g)$. In particular this implies that Poincar\'e duality holds for the reduced $L^2$-cohomology of $(M,g)$. Therefore, when the $L^2$-cohomology is finite dimensional,  it coincides with the  reduced $L^2$-cohomology and so it satisfies Poincar\'e duality.   All these properties in general fail when $(M,g)$ is incomplete.  Generally in this case the differential $d_i$ acting on smooth $i-$forms with compact support admits several different closed extensions when we look at it as an unbounded operator  between $L^2\Omega^i(M,g)$ and $L^2\Omega^{i+1}(M,g)$. Therefore, depending on the closed extensions considered, we will get different  $L^2$-cohomology groups and $L^2$-reduced cohomology groups for which, in general, Poincar\'e duality does not hold. However open and incomplete riemannian manifolds appear naturally in the context of riemannian geometry and in that of  global analysis, in particular when we deal with spaces with "singularities" such as stratified pseudomanifolds or singular complex (or real) algebraic varieties. Therefore it is an interesting question  to investigate some general constructions for the $L^2$-cohomology of $(M,g)$, when $g$ is incomplete, such that  suitable versions of \eqref{topobirbone} and \eqref{topocosmico} are satisfied or, briefly,  such that Poincar\'e duality holds. In the literature other papers have dealt with this question: for example we mention \cite{ALMP}, \cite{FBE} and  \cite{BL}.\\This paper is organized in the following way:  The first chapter is devoted to  Hilbert complexes. As explained by Br\"uning and Lesch in \cite{BL} this is the natural framework to describe the general properties of an elliptic complex from an $L^2$ point of view. We start recalling from \cite{BL} the main properties and definitions and then we prove some abstract results about Poincar\'e duality for Hilbert complexes. \\In the second section, after recalled the notion of $L^2$-de Rham cohomology, we apply the results of the first chapter to the case of the $L^2$-de Rham complex.   We can summarize our main results in the following way:
\begin{teo}
\label{marioq}
Let $(M,g)$ be an open, oriented and incomplete riemannian manifold of dimension $m$. Then, for each $i=0,...,m$, we have the following isomorphism: $$\ker(d_{max,i})/\overline{\im(d_{min,{i-1}})}\cong \ker(d_{max,m-i})/\overline{\im(d_{min,{m-i-1}})}.$$  \\Assume  now that, for each $i=0,...,m$, $\im(d_{min,i})$ is closed in $L^2\Omega^{i+1}(M,g)$. Then  there exists a Hilbert complex $(L^2\Omega^i(M,g)),d_{\mathfrak{M},i})$ which satisfies the following properties for each $i=0,...,m$:  
\begin{itemize}
\item $\mathcal{D}(d_{min,i})\subseteq \mathcal{D}(d_{\mathfrak{M},i})\subseteq \mathcal{D}(d_{max,i}),$ that is $d_{max,i}$ is an extension of $d_{\mathfrak{M},i}$ which is an extension of $d_{min,i}$.  
\item $\im(d_{\mathfrak{M},i})$ is closed in $L^2\Omega^{i+1}(M,g)$.    
\item  If we call $H^i_{2,\mathfrak{M}}(M,g)$  the cohomology of the Hilbert complex $(L^2\Omega^i(M,g),d_{\mathfrak{M},i})$ then we have: $$H^i_{2,\mathfrak{M}}(M,g)=\ker(d_{max,i})/\im(d_{min,i})$$ and $$H^i_{2,\mathfrak{M}}(M,g)\cong H^{m-i}_{2,\mathfrak{M}}(M,g).$$
\item There exists a  well defined and non degenerate pairing: $$H^i_{2,\mathfrak{M}}(M,g)\times H^{m-i}_{2,\mathfrak{M}}(M,g)\longrightarrow \mathbb{R},\ ([\omega],[\eta])\longmapsto \int_{M}\omega\wedge \eta.$$
\end{itemize}
\end{teo}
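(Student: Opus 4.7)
The plan is to exploit Hodge-star duality between the maximal and minimal closed extensions, together with the abstract Hilbert-complex framework of Section 1. Since $g$ is oriented, the Hodge star extends to an $L^2$-isometry $\star:L^2\Omega^i(M,g)\to L^2\Omega^{m-i}(M,g)$, and the pointwise identity $\delta=\pm\,\star d\,\star$ on $\Omega^\bullet_c(M)$ passes to the closures, yielding the operator identifications $(d_{min,i})^{*}=\pm\,\star\, d_{max,m-i-1}\,\star$ and $(d_{max,i})^{*}=\pm\,\star\, d_{min,m-i-1}\,\star$. The first (unconditional) isomorphism of the theorem then follows by identifying the reduced mixed cohomology $\ker(d_{max,i})/\overline{\im(d_{min,i-1})}$ with the space of mixed harmonic forms $\mathcal{H}^i_{\mathfrak{M}}:=\ker(d_{max,i})\cap\ker((d_{min,i-1})^{*})$ (using that $\overline{\im(d_{min,i-1})}$ is a closed subspace of the Hilbert space $\ker(d_{max,i})$), rewriting $\mathcal{H}^i_{\mathfrak{M}}=\ker(d_{max,i})\cap\star\,\ker(d_{max,m-i})$, and observing that $\star$ carries this intersection isometrically onto $\mathcal{H}^{m-i}_{\mathfrak{M}}$.

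Next I would construct the intermediate complex under the closed-range hypothesis by setting
\[
\mathcal{D}(d_{\mathfrak{M},i}):=\mathcal{D}(d_{min,i})+\ker(d_{max,i})=\bigl\{\omega\in\mathcal{D}(d_{max,i})\,:\,d_{max,i}\omega\in\im(d_{min,i})\bigr\},
\]
and $d_{\mathfrak{M},i}:=d_{max,i}|_{\mathcal{D}(d_{\mathfrak{M},i})}$. The equivalence of the two descriptions is immediate: if $d_{max,i}\omega=d_{min,i}\omega'$ then $\omega-\omega'\in\ker(d_{max,i})$. Closedness of $d_{\mathfrak{M},i}$ follows from the second description, since its graph is $\Gamma(d_{max,i})\cap\bigl(L^2\Omega^i(M,g)\times\im(d_{min,i})\bigr)$, an intersection of two closed subspaces. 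By construction $d_{min,i}\subseteq d_{\mathfrak{M},i}\subseteq d_{max,i}$; the identity $d_{\mathfrak{M},i+1}\,d_{\mathfrak{M},i}=0$ reduces to $\im(d_{min,i})\subseteq\ker(d_{max,i+1})$; and one reads off $\ker(d_{\mathfrak{M},i})=\ker(d_{max,i})$ together with $\im(d_{\mathfrak{M},i})=\im(d_{min,i})$, the latter being closed. This gives both the claimed cohomology formula and the Hilbert-complex structure with closed range.

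The last two items are then short consequences. Under the closedness hypothesis, reduced and unreduced mixed cohomologies coincide, so the isomorphism of the first paragraph upgrades to $H^i_{2,\mathfrak{M}}(M,g)\cong H^{m-i}_{2,\mathfrak{M}}(M,g)$, implemented by $\star$ on harmonic representatives. For the wedge pairing, the bound $|\omega\wedge\eta|\le|\omega|_g|\eta|_g$ and Cauchy--Schwarz make it well-defined on $L^2\times L^2$; independence of cohomology representatives follows from the integration-by-parts identity $\int_M d_{min,i-1}\alpha\wedge\eta=(-1)^{i}\int_M\alpha\wedge d_{max,m-i}\eta$ for $\alpha\in\mathcal{D}(d_{min,i-1})$ and $\eta\in\mathcal{D}(d_{max,m-i})$, which I would prove by approximating $\alpha$ with $\alpha_n\in\Omega^{i-1}_c(M)$, applying the distributional definition of $d_{max,m-i}\eta$ against the test form $\alpha_n$, and passing to the $L^2$ limit in both sides. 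Non-degeneracy is obtained by pairing a harmonic representative $\omega\in\mathcal{H}^i_{\mathfrak{M}}$ with $\star\omega\in\mathcal{H}^{m-i}_{\mathfrak{M}}$, producing $\int_M\omega\wedge\star\omega=\|\omega\|^2_{L^2}>0$. I expect the only genuinely delicate steps to be the correct guess for the domain $\mathcal{D}(d_{\mathfrak{M},i})$ and the Hodge-star computation of the adjoints; once these are in hand everything reduces to standard manipulations with closed operators.
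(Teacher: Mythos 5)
Your proposal is correct, and while its overall architecture (mixed harmonic spaces plus Hodge star for the duality, an intermediate complex with kernel $\ker(d_{max})$ and image $\im(d_{min})$, wedge pairing on harmonic representatives) matches the paper, the central construction is carried out by a genuinely different and cleaner argument. For the unconditional isomorphism the paper invokes its abstract machinery of complementary Hilbert complexes (Theorem \ref{berlino} and Prop.~\ref{errew}, with the complementarity of $d_{max}$ and $d_{min}$ quoted from \cite{FBE} in Prop.~\ref{mario}), whereas you run the same computation directly: $\ker(d_{max,i})/\overline{\im(d_{min,i-1})}\cong\ker(d_{max,i})\cap\ker(\delta_{max,i-1})$, then conjugate by $*$. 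The real divergence is the construction of $d_{\mathfrak{M},i}$. The paper (Theorem \ref{cheafa}) works in the graph-norm Hilbert space, splits $\mathcal{D}(d_{max,i})=\ker(d_{max,i})\oplus V_i$ and $\mathcal{D}(d_{min,i})=\ker(d_{min,i})\oplus A_i$, proves that the orthogonal projection $\pi_{2,i}:A_i\to V_i$ is injective with closed range $N_i$ (via a convergence argument that uses the closed-range hypothesis), and sets $\mathcal{D}(d_{\mathfrak{M},i}):=\ker(d_{max,i})\oplus N_i$. Your domain $\mathcal{D}(d_{min,i})+\ker(d_{max,i})$ is exactly the same subspace (decompose each $a\in A_i$ as $\pi_{1,i}(a)+\pi_{2,i}(a)$ to see $\ker(d_{max,i})+\mathcal{D}(d_{min,i})=\ker(d_{max,i})\oplus N_i$), so you produce the identical operator; but your second description $\{\omega\in\mathcal{D}(d_{max,i}):d_{max,i}\omega\in\im(d_{min,i})\}$ yields closedness in two lines, since $\Gamma(d_{\mathfrak{M},i})=\Gamma(d_{max,i})\cap\bigl(L^2\Omega^i(M,g)\times\im(d_{min,i})\bigr)$ is an intersection of closed sets (this is precisely where the closed-range hypothesis enters), and it makes $\ker(d_{\mathfrak{M},i})=\ker(d_{max,i})$ and $\im(d_{\mathfrak{M},i})=\im(d_{min,i})$ immediate. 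What the paper's heavier construction buys is the explicit orthogonal decomposition and the projections $\pi_{1,i},\pi_{2,i}$, which are reused verbatim in the proofs of Theorem \ref{dualcomplex} (the identification $d_{\mathfrak{M},i}^*=\delta_{\mathfrak{M},i}=\pm*d_{\mathfrak{M},i}*$ of Theorem \ref{dualcomplexform}) and Theorem \ref{chebrina}; if you later need those results you would have to redo that bookkeeping, whereas your description pays off in brevity and in the useful characterization of $\mathcal{D}(d_{\mathfrak{M},i})$ as the forms whose distributional differential lies in $\im(d_{min,i})$. Finally, for the pairing, your non-degeneracy argument (pair the harmonic representative $\omega$ against $*\omega$ to get $\|\omega\|^2_{L^2}$) is a tidy constructive alternative to the paper's orthogonal-complement argument in Section 4, and your integration-by-parts identity, proved by approximation in the $d_{min}$-graph norm, is an equivalent substitute for the paper's conversion of wedge integrals into $L^2$ inner products via the Hodge star.
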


Then  we prove that:

\begin{teo}
\label{dualcomplexformz}
Under the assumptions of Theorem \ref{marioq}.  Consider the Hilbert complexes: 
\begin{equation}
\label{topoinfrarossoz}
0\leftarrow L^2(M,g)\stackrel{\delta_{max,0}}{\leftarrow}L^2\Omega^1(M,g)\stackrel{\delta_{max,1}}{\leftarrow}L^2\Omega^{2}(M,g)\stackrel{\delta_{max,2}}{\leftarrow}...\stackrel{\delta_{max,n-1}}{\leftarrow}L^2\Omega^{n}(M,g)\leftarrow 0,
\end{equation}
and 
\begin{equation}
\label{topodellozioz}
0\leftarrow L^2(M,g)\stackrel{\delta_{min,0}}{\leftarrow}L^2\Omega^1(M,g)\stackrel{\delta_{min,1}}{\leftarrow}L^2\Omega^{2}(M,g)\stackrel{\delta_{min,2}}{\leftarrow}...\stackrel{\delta_{min,n-1}}{\leftarrow}L^2\Omega^{n}(M,g)\leftarrow 0
\end{equation}
Let 
\begin{equation}
\label{topoultraviolettoz}
0\leftarrow L^2(M,g)\stackrel{\delta_{\mathfrak{M},0}}{\leftarrow}L^2\Omega^1(M,g)\stackrel{\delta_{\mathfrak{M},1}}{\leftarrow}L^2\Omega^{2}(M,g)\stackrel{\delta_{\mathfrak{M},2}}{\leftarrow}...\stackrel{\delta_{\mathfrak{M},n-1}}{\leftarrow}L^2\Omega^{n}(M,g)\leftarrow 0
\end{equation}
be the intermediate complex, which extends \eqref{topodellozioz} and which is extended by \eqref{topoinfrarossoz}, built according to Theorem \ref{marioq}. Then, for each $i=0,...,m$,  we have:
\begin{equation}
\label{topopressatoz}
d_{\mathfrak{M},i}^*=\delta_{\mathfrak{M},i}=\pm *d_{\mathfrak{M},i}*
\end{equation}
where $d_{\mathfrak{M},i}^*$ is the adjoint of $d_{\mathfrak{M},i}$ and $*$ is the Hodge star operator.
\end{teo}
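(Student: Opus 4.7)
The plan is to prove the two equalities of \eqref{topopressatoz} separately. For $\delta_{\mathfrak{M},i}=\pm *d_{\mathfrak{M},i}*$ (where the index on the inner $d$ is implicitly the degree-shifted $m-i-1$), the idea is to transport everything through the Hodge star. On $\Omega^\bullet_c(M)$ the pointwise identity $\delta=\pm *d*$ holds, and $*:L^2\Omega^k(M,g)\to L^2\Omega^{m-k}(M,g)$ is an isometric isomorphism with $**=\pm\id$. Passing to graph closures and to Hilbert adjoints respectively yields the intertwining relations $\delta_{min,i}=\pm *d_{min,m-i-1}*$ and $\delta_{max,i}=\pm *d_{max,m-i-1}*$. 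In particular, closedness of $\im(d_{min,i})$ transfers to closedness of $\im(\delta_{min,i})$, so Theorem \ref{marioq} applies also to the codifferential complex \eqref{topodellozioz} and produces the intermediate complex \eqref{topoultraviolettoz}. Because the construction from Chapter 1 depends only on the abstract Hilbert complex structure, it is compatible with the Hodge star intertwining, and the identity $\delta_{\mathfrak{M},i}=\pm *d_{\mathfrak{M},m-i-1}*$ follows.

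For the identity $d_{\mathfrak{M},i}^*=\delta_{\mathfrak{M},i}$ I plan to argue directly on domains. Taking adjoints of the inclusions $d_{min,i}\subseteq d_{\mathfrak{M},i}\subseteq d_{max,i}$ and using the standard identities $d_{max,i}^*=\delta_{min,i}$ and $d_{min,i}^*=\delta_{max,i}$ gives $\delta_{min,i}\subseteq d_{\mathfrak{M},i}^*\subseteq \delta_{max,i}$, so $d_{\mathfrak{M},i}^*$ is already known to be an intermediate extension of $\delta$. The conditions $\ker(d_{\mathfrak{M},i})=\ker(d_{max,i})$ and $\im(d_{\mathfrak{M},i-1})=\im(d_{min,i-1})$ imposed by Theorem \ref{marioq} force $\mathcal{D}(d_{\mathfrak{M},i})=\mathcal{D}(d_{min,i})+\ker(d_{max,i})$, and an integration-by-parts computation then identifies $\mathcal{D}(d_{\mathfrak{M},i}^*)$ with the set $\{y\in\mathcal{D}(\delta_{max,i}):\delta_{max,i}y\in\ker(d_{max,i})^{\perp}\}$. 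Using $\ker(d_{max,i})^{\perp}=\overline{\im(\delta_{min,i})}=\im(\delta_{min,i})$ (the last equality by the just-verified closedness) and a short decomposition argument, this set is shown to equal $\mathcal{D}(\delta_{min,i})+\ker(\delta_{max,i})=\mathcal{D}(\delta_{\mathfrak{M},i})$. On this common domain both operators act as the restriction of $\delta_{max,i}$, so they coincide.

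The main obstacle is the careful identification of $\mathcal{D}(d_{\mathfrak{M},i})$ coming out of the abstract construction of Chapter 1: without the concrete description $\mathcal{D}(d_{\mathfrak{M},i})=\mathcal{D}(d_{min,i})+\ker(d_{max,i})$ the adjoint cannot be computed cleanly. This point, together with keeping track of the degree shift induced by $*$ and of the boundary indices $i=0,m$, are the places where real care is required; once they are in place, the identities in \eqref{topopressatoz} become essentially formal consequences of the self-duality of the de Rham complex under the Hodge star and of the closed range assumption propagated from the $d$-complex to the $\delta$-complex.
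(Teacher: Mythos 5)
Your proof is correct, but it takes a genuinely different route from the paper's. The paper obtains the statement as a direct specialization of the abstract Theorem \ref{dualcomplex}: there the identity $P_i^*=S_i$ is proved by showing $\ker(P_i^*)=\ker(D_i^*)=\ker(S_i)$ and $\im(P_i^*)=\im(L_i^*)=\im(S_i)$, verifying through a pairing computation that $P_i^*$ extends $S_i$, and invoking the rigidity result Prop.~\ref{topolampo}; the Hodge-star formula is then a separate, rather long computation with the duality maps $\phi_i$ (formula \eqref{topocentrifuga}). Your argument instead stays entirely in the de Rham setting, and its engine is the explicit non-orthogonal domain formula $\mathcal{D}(d_{\mathfrak{M},i})=\mathcal{D}(d_{min,i})+\ker(d_{max,i})$, on which $d_{\mathfrak{M},i}$ acts as the restriction of $d_{max,i}$. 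This formula is indeed forced by the properties of Theorem \ref{marioq}: intermediacy together with $\ker(d_{\mathfrak{M},i})=\ker(d_{max,i})$ and $\im(d_{\mathfrak{M},i})=\im(d_{min,i})$ give, for any $x\in\mathcal{D}(d_{\mathfrak{M},i})$, an element $u\in\mathcal{D}(d_{min,i})$ with $d_{\mathfrak{M},i}x=d_{min,i}u$, so that $x=u+(x-u)$ with $x-u\in\ker(d_{max,i})$. Granting this, your computation of $\mathcal{D}(d_{\mathfrak{M},i}^*)$ and the identification $\{y\in\mathcal{D}(\delta_{max,i}):\delta_{max,i}y\in\im(\delta_{min,i})\}=\mathcal{D}(\delta_{min,i})+\ker(\delta_{max,i})=\mathcal{D}(\delta_{\mathfrak{M},i})$ are both correct, and the two operators coincide because both act as restrictions of $\delta_{max,i}$. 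The one step you state too loosely is the ``naturality'' of the construction of Theorem \ref{cheafa} under the Hodge star; rather than re-examining the internals of that construction, note that your domain formula already proves the intermediate complex is the \emph{unique} closed operator sandwiched between the minimal and maximal ones whose kernel equals that of the maximal operator and whose image equals that of the minimal one; since $\pm * d_{\mathfrak{M},m-i-1}*$ is closed, lies between $\delta_{min,i}$ and $\delta_{max,i}$, and has kernel $\ker(\delta_{max,i})$ and image $\im(\delta_{min,i})$ (all transported by the isometry $*$ from the corresponding facts for $d_{\mathfrak{M},m-i-1}$), it must equal $\delta_{\mathfrak{M},i}$. As for what each approach buys: the paper's abstract version holds for arbitrary complementary Hilbert complexes with closed ranges, whereas yours is shorter and more elementary in the de Rham case and yields the explicit description of $\mathcal{D}(d_{\mathfrak{M},i})$ together with the uniqueness of the intermediate complex, neither of which the paper states.
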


Moreover we prove the following result:

\begin{teo}
\label{nuvolosoz}
Let $(M,g)$ be an open, oriented and incomplete riemannian manifold of dimension $m$. Suppose that, for each $i=0,...,m$, $\im(d_{min,i})$ is closed in $L^2\Omega^{i+1}(M,g)$. Let $(L^2\Omega^i(M,g),d_{\mathfrak{M},i})$ be the Hilbert complex built in Theorem \ref{marioq}. Assume that $(L^2\Omega^i(M,g),d_{\mathfrak{M},i})$ is a Fredholm complex. Then:
\begin{enumerate}
\item  Every  closed extension $(L^2\Omega^i(M,g),D_i)$ of $(\Omega_c^i(M),d_i)$ is a Fredholm complex.
\item For every $i=0,...,m$  the quotient of the domain of $d_{max,i}$ with the domain of $d_{min,i}$, that is $$\mathcal{D}(d_{max,i})/\mathcal{D}(d_{min,i})$$ is a finite dimensional vector space.
\end{enumerate}
\end{teo}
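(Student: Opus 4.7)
The plan is to deduce both statements from the finite dimensionality of the cohomology $H^i_{2,\mathfrak{M}}(M,g)=\ker(d_{max,i})/\im(d_{min,i-1})$, which is a direct consequence of the Fredholm hypothesis on the intermediate complex $(L^2\Omega^i(M,g),d_{\mathfrak{M},i})$ together with the identification provided by Theorem \ref{marioq}. I would treat the second statement first, since its conclusion is the key input for the first.

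For part (2), I would exhibit the short exact sequence of vector spaces induced by $d_{max,i}$:
\[
0\longrightarrow \ker(d_{max,i})/\ker(d_{min,i})\longrightarrow \mathcal{D}(d_{max,i})/\mathcal{D}(d_{min,i})\stackrel{\overline{d_{max,i}}}{\longrightarrow}\im(d_{max,i})/\im(d_{min,i})\longrightarrow 0,
\]
where $\overline{d_{max,i}}$ is the quotient map induced by $d_{max,i}$; its well-definedness and exactness follow from the inclusion $d_{min,i}\subseteq d_{max,i}$. The leftmost term is a quotient of $H^i_{2,\mathfrak{M}}$ by the subspace $\ker(d_{min,i})/\im(d_{min,i-1})$ and is therefore finite dimensional. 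The rightmost term embeds, using $d_{max,i+1}\circ d_{max,i}=0$, into $\ker(d_{max,i+1})/\im(d_{min,i})=H^{i+1}_{2,\mathfrak{M}}$, which is again finite dimensional. The conclusion follows at once.

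For part (1), fix a closed extension $D_i$ of the de Rham complex, so that $d_{min,i}\subseteq D_i\subseteq d_{max,i}$ and $D_{i+1}\circ D_i=0$. By part (2) I can choose a finite dimensional subspace $V_i\subseteq \mathcal{D}(D_i)$ with $\mathcal{D}(D_i)=\mathcal{D}(d_{min,i})+V_i$, and then
\[
\im(D_i)=\im(d_{min,i})+d_{max,i}(V_i).
\]
Since $\im(d_{min,i})$ is closed by assumption and $d_{max,i}(V_i)$ is finite dimensional, the standard fact that the sum of a closed subspace and a finite dimensional subspace of a Hilbert space is closed yields the closedness of $\im(D_i)$. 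For the finite dimensionality of $H^i_D:=\ker(D_i)/\im(D_{i-1})$, I would observe that $\im(d_{min,i-1})\subseteq \im(D_{i-1})\subseteq \ker(D_i)$, so that the natural quotient map $\ker(D_i)/\im(d_{min,i-1})\twoheadrightarrow H^i_D$ is surjective, while its domain injects into $H^i_{2,\mathfrak{M}}$ and is therefore finite dimensional.

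The main subtlety lies in part (2): one must use the nontrivial identification $H^i_{2,\mathfrak{M}}=\ker(d_{max,i})/\im(d_{min,i-1})$ supplied by Theorem \ref{marioq} (rather than $\im(d_{\mathfrak{M},i-1})$), and the closedness hypothesis on $\im(d_{min,i})$ is essential both for this identification and for the closed-plus-finite-dimensional step in part (1). Once part (2) is in hand, part (1) is essentially soft: the whole complex structure of $D$ is pinned down modulo finite dimensional corrections by the minimal complex, and the Fredholm conclusion reduces to elementary Hilbert space bookkeeping.
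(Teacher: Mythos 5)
Your proof is correct, and while part (1) coincides with the paper's argument, part (2) takes a genuinely different and more elementary route. For part (1) the paper (first half of Theorem \ref{chebrina}) uses the same sandwich of an injection and a surjection between quotient spaces, with the only cosmetic difference that it factors through $\ker(d_{max,i})/\im(D_{i-1})$ while you factor through $\ker(D_i)/\im(d_{min,i-1})$. For part (2), however, the paper (second half of Theorem \ref{chebrina}) endows $W_i=\mathcal{D}(d_{max,i})/\im(d_{min,i-1})$ and $V_i=\mathcal{D}(d_{min,i})/\im(d_{min,i-1})$ with Banach norms, shows that the induced operators $\tilde{L}_i:W_i\rightarrow\ker(d_{max,i+1})$, $\tilde{D}_i:V_i\rightarrow\ker(d_{max,i+1})$ and the inclusion $\tilde{i}_i:V_i\rightarrow W_i$ are bounded, identifies the kernels and cokernels of $\tilde{L}_i,\tilde{D}_i$ with cohomology groups to conclude that these two are Fredholm operators, and finally uses $\tilde{L}_i\circ\tilde{i}_i=\tilde{D}_i$ to deduce that the injective map $\tilde{i}_i$ is Fredholm, so that its cokernel $\mathcal{D}(d_{max,i})/\mathcal{D}(d_{min,i})$ is finite dimensional. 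Your short exact sequence
\begin{equation*}
0\longrightarrow \ker(d_{max,i})/\ker(d_{min,i})\longrightarrow \mathcal{D}(d_{max,i})/\mathcal{D}(d_{min,i})\longrightarrow \im(d_{max,i})/\im(d_{min,i})\longrightarrow 0
\end{equation*}
dispenses with all of this functional analysis: it uses only that $d_{max,i}$ extends $d_{min,i}$ (which gives well-definedness, exactness, and $\ker(d_{max,i})\cap\mathcal{D}(d_{min,i})=\ker(d_{min,i})$), that $d_{max,i+1}\circ d_{max,i}=0$, and the identification $H^i_{2,\mathfrak{M}}(M,g)=\ker(d_{max,i})/\im(d_{min,i-1})$ supplied by Theorem \ref{marioq}; no graph norms, no continuity, no Fredholm operator theory. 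Moreover it yields at once
\begin{equation*}
\dim\bigl(\mathcal{D}(d_{max,i})/\mathcal{D}(d_{min,i})\bigr)=\dim H^i_{2,\mathfrak{M}}(M,g)-\dim H^i_{2,min}(M,g)+\dim H^{i+1}_{2,\mathfrak{M}}(M,g)-\dim H^{i+1}_{2,max}(M,g),
\end{equation*}
which is exactly the cohomological formula of Corollaries \ref{lupolupi} and \ref{lupolupe}; the paper obtains that formula separately from the index identity $\ind(\tilde{i}_i)=\ind(\tilde{L}_i)-\ind(\tilde{D}_i)$, and this reusable Fredholm-operator setup is essentially what its heavier machinery buys. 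Two minor remarks: since in this paper a Fredholm complex is by definition one whose cohomology groups are all finite dimensional, your verification that $\im(D_i)$ is closed is correct but superfluous (closedness also follows a posteriori from Proposition \ref{topoamotore}); dropping it makes part (1) independent of part (2), so your chosen ordering is purely expository.
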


According to Theorem \ref{nuvolosoz}, we  define the following number associated to $(M,g)$:
\begin{equation}
\label{boiachiz}
\psi_{L^2}(M,g):=\sum_{i=0}^m(-1)^i\dim(\mathcal{D}(d_{max,i})/\mathcal{D}(d_{min,i}))
\end{equation}
and we prove  the following formula:
\begin{teo}
\label{zarroz}
Under the  hypotheses of Theorem \ref{nuvolosoz}. The following formula holds:
\begin{equation}
\label{merisiz}
\psi_{L^2}(M,g)=\chi_{2,M}(M,g)-\chi_{2,m}(M,g)=\left\{
\begin{array}{ll}
0\ &\ \dim(M)\ is\ even\\
2\chi_{2,M}(M,g)\ &\ \dim(M)\ is\ odd
\end{array}
\right.
\end{equation}
where $\chi_{2,M}(M,g)$ and $\chi_{2,m}(M,g)$ are the Euler characteristics associated respectively to the complexes $(L^2\Omega^i(M,g),d_{max,i})$ and $(L^2\Omega^i(M,g),d_{min,i})$.
\end{teo}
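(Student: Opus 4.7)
The plan is to extract the first equality $\psi_{L^2}(M,g) = \chi_{2,M}(M,g) - \chi_{2,m}(M,g)$ from a short exact sequence of cochain complexes, and the further identity $\chi_{2,m} = (-1)^m \chi_{2,M}$ from Hodge-star duality between the minimal and maximal complexes.

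First I would form the three cochain complexes whose degree-$i$ components are $\mathcal{D}(d_{min,i})$, $\mathcal{D}(d_{max,i})$ and the quotient $Q_i := \mathcal{D}(d_{max,i})/\mathcal{D}(d_{min,i})$, with differentials $d_{min,i}$, $d_{max,i}$ and the map induced by $d_{max,i}$ on $Q$ (well defined because $d_{max,i}$ sends $\mathcal{D}(d_{min,i})$ into $\mathcal{D}(d_{min,i+1})$ and agrees there with $d_{min,i}$). These assemble into a short exact sequence of complexes
\begin{equation*}
0 \longrightarrow \mathcal{D}(d_{min,\cdot}) \longrightarrow \mathcal{D}(d_{max,\cdot}) \longrightarrow Q_\cdot \longrightarrow 0.
\end{equation*}
By Theorem \ref{nuvolosoz} the outer complexes are Fredholm, so $H^i_{2,max}$ and $H^i_{2,min}$ are finite dimensional, and each $Q_i$ is also finite dimensional. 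The associated long exact sequence in cohomology, together with additivity of the Euler characteristic on short exact sequences of complexes with finite-dimensional cohomology, yields $\chi(Q_\cdot) = \chi_{2,M} - \chi_{2,m}$. Since the $Q_i$ are finite dimensional, the Euler characteristic of $Q_\cdot$ can be computed at the chain level as $\sum_i (-1)^i \dim Q_i = \psi_{L^2}(M,g)$, establishing the first equality.

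Next I would use Hodge duality to relate $\chi_{2,m}$ and $\chi_{2,M}$. The Hodge star is an isometric isomorphism $* : L^2\Omega^i(M,g) \to L^2\Omega^{m-i}(M,g)$ which on smooth compactly supported forms satisfies $\delta = \pm *d*$. Since $*$ is bounded and unitary it respects graph closures, so conjugation by $*$ intertwines the minimal complex with the Hilbert-adjoint of the maximal complex after degree reversal $i \leftrightarrow m-i$ (this is the same mechanism underlying Theorem \ref{dualcomplexformz} applied to the extreme extensions, using $d_{max,i}^{*} = \delta_{min,i}$). For any Fredholm Hilbert complex, the Hodge--Kodaira decomposition identifies both the complex's cohomology and its adjoint's cohomology in degree $i$ with the same harmonic subspace, hence they have equal dimension. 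This gives $\dim H^i_{2,min} = \dim H^{m-i}_{2,max}$, and re-indexing yields $\chi_{2,m} = (-1)^m \chi_{2,M}$. Combining with the first equality produces $\psi_{L^2} = (1-(-1)^m)\chi_{2,M}$, which is $0$ for even $m$ and $2\chi_{2,M}$ for odd $m$, matching \eqref{merisiz}.

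The main delicate point I expect is verifying the Hodge-star intertwining at the level of closed $L^2$-extensions: one must check carefully that the smooth identity $\delta = \pm * d *$ really implements $d_{min,i}^{*} = \delta_{max,i}$ (and analogously for the maximal extension) so that conjugation by $*$ takes the minimal complex precisely to the degree-reversed adjoint of the maximal complex. Once this and the harmonic-form identification of cohomology for Fredholm Hilbert complexes are in hand, the remainder of the argument is formal homological algebra.
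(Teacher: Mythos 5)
Your proof is correct, but the first equality is reached by a genuinely different route than the paper's. The paper derives $\psi_{L^2}(M,g)=\chi_{2,M}(M,g)-\chi_{2,m}(M,g)$ from the cohomological formula of Corollary \ref{lupolupe}, namely $\dim(\mathcal{D}(d_{max,i})/\mathcal{D}(d_{min,i}))=b_{2,\mathfrak{M},i}-b_{2,M,i+1}+b_{2,\mathfrak{M},i+1}-b_{2,m,i}$, which itself comes from the Fredholm-index bookkeeping with the operators $\tilde{L}_j,\tilde{D}_j,\tilde{i}_j$ in Theorem \ref{chebrina}; it then sums with alternating signs and telescopes, so the intermediate complex $(L^2\Omega^i(M,g),d_{\mathfrak{M},i})$ and its Betti numbers appear throughout. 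You instead use the short exact sequence of purely algebraic cochain complexes $0\rightarrow \mathcal{D}(d_{min,\cdot})\rightarrow \mathcal{D}(d_{max,\cdot})\rightarrow Q_{\cdot}\rightarrow 0$, whose outer complexes compute exactly $H^i_{2,min}(M,g)$ and $H^i_{2,max}(M,g)$, and you get the identity from the long exact sequence, the two conclusions of Theorem \ref{nuvolosoz} (Fredholmness of every closed extension, hence of the extreme ones, and finite dimensionality of each $Q_i$), and the standard fact that a finite complex of finite-dimensional spaces has Euler characteristic $\sum_i(-1)^i\dim Q_i$, which is $\psi_{L^2}(M,g)$ by \eqref{boiachiz}. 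This is more elementary and self-contained at this step, since it bypasses the $\mathfrak{M}$-complex and any index computation; what it buys less is precision: Corollary \ref{lupolupe} is a degreewise formula for each $\dim(\mathcal{D}(d_{max,i})/\mathcal{D}(d_{min,i}))$ individually, of which Theorem \ref{zarroz} is only the alternating sum, and your argument recovers only that sum. For the second equality the two proofs are essentially the same idea: the paper quotes Prop. \ref{mario} and Prop. \ref{errew} (complementarity of the maximal and minimal complexes, which is Hodge-star duality in disguise) to get $H^i_{2,max}(M,g)\cong H^{m-i}_{2,min}(M,g)$, while you re-derive this from $\delta_{min,i}=\pm * d_{min,m-i-1}*$, the identity $(d_{max,i})^{*}=\delta_{min,i}$, and Prop. \ref{fred}; the intertwining facts you flag as the delicate point are precisely the ones recorded as standard in Section 2 of the paper, so the gap you anticipated is already covered by the paper's stated facts.
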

In the remaining part of the second chapter and in the third one we prove other results for the complex $(L^2\Omega^i(M,g),d_{\mathfrak{M},i})$. In particular we prove a Hodge type theorem  for the cohomology groups $H^i_{2,\mathfrak{M}}(M,g)$, we introduce the  $L^2-$Euler characteristic $\chi_{2,\mathfrak{M}}(M,g)$ associated to $(L^2\Omega^i(M,g),d_{\mathfrak{M},i})$ and the $L^2-$signature $\sigma_{2,\mathfrak{M}}(M,g)$ for $(M,g)$ when $\dim(M)=4l$. Then we show that they are the index of some suitable Fredholm operators arising from the complex $(L^2\Omega^i(M,g),d_{\mathfrak{M},i})$. Finally the last part of the paper contains some examples and applications of the previous results.\\ We conclude this introduction mentioning that in a subsequent paper we plan to come back again on this subject investigating some topological properties of the vector spaces $H^i_{2,\mathfrak{M}}(M,g)$ with particular attention to the cases when they are finite dimensional. \vspace{1 cm}

\textbf{Acknowledgments.} I wish to thank  Jochen Br\"uning for many interesting discussions and helpful hints. I also wish to thank Pierre Albin,  Erich Leichtnam, Rafe Mazzeo and Paolo Piazza for interesting comments and emails.  This research has been financially supported by the SFB 647 : Raum-Zeit-Materie.

\section {Hilbert Complexes}

We start the section recalling the notion of Hilbert complex and its main properties.  For a complete development of the subject we refer to \cite{BL}.

\begin{defi} 
A Hilbert complex is a complex, $(H_{*},D_{*})$ of the form:
\begin{equation}
\label{mm}
0\rightarrow H_{0}\stackrel{D_{0}}{\rightarrow}H_{1}\stackrel{D_{1}}{\rightarrow}H_{2}\stackrel{D_{2}}{\rightarrow}...\stackrel{D_{n-1}}{\rightarrow}H_{n}\rightarrow 0,
\end{equation}
where each $H_{i}$ is a separable Hilbert space and each map $D_{i}$ is a closed operator called the differential such that:
\begin{enumerate}
\item $\mathcal{D}(D_{i})$, the domain of $D_{i}$, is dense in $H_{i}$.
\item $\im(D_{i})\subset \mathcal{D}(D_{i+1})$.
\item $D_{i+1}\circ D_{i}=0$ for all $i$.
\end{enumerate}
\end{defi}
The cohomology groups of the complex are $H^{i}(H_{*},D_{*}):=\ker(D_{i})/\im(D_{i-1})$. If the groups $H^{i}(H_{*},D_{*})$ are all finite dimensional we say that the complex is a  $Fredholm\ complex$. 

Given a Hilbert complex there is a dual Hilbert complex
\begin{equation}
0\leftarrow H_{0}\stackrel{D_{0}^{*}}{\leftarrow}H_{1}\stackrel{D_{1}^{*}}{\leftarrow}H_{2}\stackrel{D_{2}^{*}}{\leftarrow}...\stackrel{D_{n-1}^{*}}{\leftarrow}H_{n}\leftarrow 0,
\label{mmp}
\end{equation}
defined using $D_{i}^{*}:H_{i+1}\rightarrow H_{i}$, the Hilbert space adjoint of the differential $D_{i}:H_{i}\rightarrow H_{i+1}$. The cohomology groups of $(H_{j},(D_{j})^*)$, the dual Hilbert  complex, are $$H^{i}(H_{j},(D_{j})^*):=\ker(D_{n-i-1}^{*})/\im(D_{n-i}^*).$$\\ An important self-adjoint operator associated to \eqref{mm} is the following one: let us label $H:=\bigoplus_{i=0}^nH_i$ and let 
\begin{equation}
\label{aster}
D+D^*:H\rightarrow H
\end{equation}
be the self-adjoint operator with domain
$$\mathcal{D}(D+D^*)=\bigoplus_{i=0}^n (\mathcal{D}(D_{i})\cap \mathcal{D}(D^*_{i-1}))$$
and defined as $$D+D^*:=\bigoplus_{i=0}^n(D_i+D^*_{i-1}).$$
Moreover, for all $i$, there is also a Laplacian $\Delta_{i}=D_{i}^{*}D_{i}+D_{i-1}D_{i-1}^{*}$ which is a self-adjoint operator on $H_{i}$ with domain 
\begin{equation}
\label{saed}
\mathcal{D}(\Delta_{i})=\{v\in \mathcal{D}(D_{i})\cap \mathcal{D}(D_{i-1}^{*}): D_{i}v\in \mathcal{D}(D_{i}^{*}), D_{i-1}^{*}v\in \mathcal{D}(D_{i-1})\}
\end{equation} and nullspace: 
\begin{equation}
\label{said}
\mathcal{H}^{i}(H_{*},D_{*}):=\ker(\Delta_{i})=\ker(D_{i})\cap \ker(D_{i-1}^{*}).
\end{equation}

The following propositions are well known. The first result is the weak Kodaira decomposition:

\begin{prop}
\label{beibei}
 [\cite{BL}, Lemma 2.1] Let $(H_{i},D_{i})$ be a Hilbert complex and $(H_{i},(D_{i})^{*})$ its dual complex, then: \begin{equation}
\label{kudam}
H_{i}=\mathcal{H}^{i}\oplus\overline{\im(D_{i-1})}\oplus\overline{\im(D_{i}^{*})}.
\end{equation}
\end{prop}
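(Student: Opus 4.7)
The plan is to reduce the three-term decomposition to two applications of the standard orthogonal decomposition for a densely defined closed operator between Hilbert spaces, together with the coboundary identity $D_i \circ D_{i-1} = 0$. Recall that for any densely defined closed operator $T:\mathcal{D}(T)\subseteq H\to K$ one has the orthogonal splittings $H = \ker(T)\oplus\overline{\im(T^*)}$ and $K = \ker(T^*)\oplus\overline{\im(T)}$; I would quote this as a black box.

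First I would apply this twice in degree $i$: from $D_i : H_i \to H_{i+1}$ I get
\begin{equation*}
H_i \;=\; \ker(D_i)\;\oplus\;\overline{\im(D_i^*)},
\end{equation*}
and from $D_{i-1}: H_{i-1}\to H_i$ I get
\begin{equation*}
H_i \;=\; \ker(D_{i-1}^*)\;\oplus\;\overline{\im(D_{i-1})}.
\end{equation*}
Next I would use the complex condition $D_i\circ D_{i-1}=0$, which gives $\im(D_{i-1})\subseteq\ker(D_i)$, and hence also $\overline{\im(D_{i-1})}\subseteq\ker(D_i)$ since $\ker(D_i)$ is closed. Restricting the second splitting to the closed subspace $\ker(D_i)$ then yields
\begin{equation*}
\ker(D_i) \;=\; \bigl(\ker(D_i)\cap\ker(D_{i-1}^*)\bigr)\;\oplus\;\overline{\im(D_{i-1})} \;=\; \mathcal{H}^i\;\oplus\;\overline{\im(D_{i-1})},
\end{equation*}
where the last equality is exactly the definition \eqref{said} of $\mathcal{H}^i$. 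Substituting this into the first splitting gives the desired
\begin{equation*}
H_i \;=\; \mathcal{H}^i\;\oplus\;\overline{\im(D_{i-1})}\;\oplus\;\overline{\im(D_i^*)}.
\end{equation*}

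To finish I would verify that the three summands are pairwise orthogonal, which is already encoded in the construction but worth stating: $\mathcal{H}^i\perp\overline{\im(D_{i-1})}$ since $\mathcal{H}^i\subseteq\ker(D_{i-1}^*)$; $\mathcal{H}^i\perp\overline{\im(D_i^*)}$ since $\mathcal{H}^i\subseteq\ker(D_i)$; and $\overline{\im(D_{i-1})}\perp\overline{\im(D_i^*)}$ because $\overline{\im(D_{i-1})}\subseteq\ker(D_i)$ which is orthogonal to $\overline{\im(D_i^*)}$ by the first splitting. There is no real obstacle here: the only subtlety is making sure one uses \emph{closures} of the images rather than the images themselves, since without closure one only gets orthogonality and a dense inclusion, not a direct sum decomposition of $H_i$; this is where closedness of $D_i$ and $D_{i-1}$ (ensuring $D_i^*$, $D_{i-1}^*$ are well-defined and densely defined) is essential.
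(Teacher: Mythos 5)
Your proof is correct, and there is nothing in the paper to compare it against step by step: the paper states this proposition as an imported result, citing Lemma 2.1 of \cite{BL}, and gives no argument of its own. Your proposal therefore supplies exactly the proof that the paper delegates to the literature, and it is the standard one: the two splittings $H_i=\ker(D_i)\oplus\overline{\im(D_i^*)}$ and $H_i=\ker(D_{i-1}^*)\oplus\overline{\im(D_{i-1})}$ for closed, densely defined operators, glued together via $\overline{\im(D_{i-1})}\subseteq\ker(D_i)$, with the restriction step justified exactly as you indicate. A marginally more compact packaging of the same content, closer to the original in \cite{BL}, is to first note $\overline{\im(D_{i-1})}\perp\overline{\im(D_i^*)}$ directly (for $x\in\mathcal{D}(D_{i-1})$ and $y\in\mathcal{D}(D_i^*)$ one has $\langle D_{i-1}x,D_i^*y\rangle=\langle D_iD_{i-1}x,y\rangle=0$, using that $\im(D_{i-1})\subset\mathcal{D}(D_i)$), and then to compute the orthogonal complement of their sum as $\im(D_{i-1})^{\bot}\cap\im(D_i^*)^{\bot}=\ker(D_{i-1}^*)\cap\ker(D_i)=\mathcal{H}^i$, using $\ker(T^*)=\im(T)^{\bot}$ and, for $T$ closed and densely defined, $\ker(T)=\im(T^*)^{\bot}$; this avoids the intermediate restriction-to-a-subspace lemma, though that lemma is elementary and you verify it correctly. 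One small definitional remark: you take $\mathcal{H}^i=\ker(D_i)\cap\ker(D_{i-1}^*)$ as the definition, whereas the paper defines $\mathcal{H}^i:=\ker(\Delta_i)$ and records the equality with $\ker(D_i)\cap\ker(D_{i-1}^*)$ in \eqref{said}; since \eqref{kudam} is stated with that characterization available, your usage is consistent with the paper's conventions.
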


The reduced cohomology groups of the complex are: $$\overline{H}^{i}(H_{*},D_{*}):=\ker(D_{i})/(\overline{\im(D_{i-1})}).$$
By the above proposition there is a pair of  weak de Rham isomorphism theorems:
 \begin{equation}
\label{pppp}
\left\{
\begin{array}{ll}
\mathcal{H}^{i}(H_{j},D_{j})\cong\overline{H}^{i}(H_{j},D_{j})\\
\mathcal{H}^{i}(H_{j},D_{j})\cong\overline{H}^{n-i}(H_{j},(D_{j})^{*})
\end{array}
\right.
\end{equation}
where in the second case we mean the cohomology of the dual Hilbert complex.\\
The complex $(H_{*},D_{*})$ is called $ weakly\  Fredholm$  if $\mathcal{H}^{i}(H_{*},D_{*})$ is finite dimensional for each $i$. By the next propositions we get immediately that each Fredholm complex is a weak Fredholm complex.

\begin{prop}
\label{topoamotore}
[\cite{BL}, corollary 2.5] If the cohomology of a Hilbert complex $(H_{*}, D_{*})$ is finite dimensional then, for all $i$,  $\im(D_{i-1})$ is closed  and  $H^{i}(H_{*},D_{*})\cong \mathcal{H}^{i}(H_{*},D_{*}).$
\end{prop}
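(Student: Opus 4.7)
The plan is to derive both claims from the weak Kodaira decomposition (Proposition \ref{beibei}) together with the open mapping theorem. The key observation is that the decomposition
$$\ker(D_i)=\mathcal{H}^i\oplus\overline{\im(D_{i-1})}$$
lets us rewrite the cohomology as
$$H^i(H_*,D_*)=\ker(D_i)/\im(D_{i-1})\cong \mathcal{H}^i\oplus\bigl(\overline{\im(D_{i-1})}/\im(D_{i-1})\bigr).$$
Therefore if $H^i(H_*,D_*)$ is finite dimensional, the quotient $\overline{\im(D_{i-1})}/\im(D_{i-1})$ is automatically finite dimensional. The main task reduces to showing that a dense subspace of finite codimension of a Banach space is the whole space; once this is proved, $\im(D_{i-1})$ is closed, and substituting into the Kodaira decomposition yields $H^i(H_*,D_*)\cong \mathcal{H}^i(H_*,D_*)$ as in the second assertion.

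To handle the main step, I would set $E:=\overline{\im(D_{i-1})}$ and $F:=\im(D_{i-1})$, and pick any algebraic complement $V\subset E$ of $F$, which must be finite dimensional by hypothesis. On the Hilbert space $\mathcal{D}(D_{i-1})$, equipped with the graph norm, the operator $D_{i-1}$ is continuous; factoring out its kernel gives a continuous injection $\widetilde{D}_{i-1}:\mathcal{D}(D_{i-1})/\ker(D_{i-1})\to E$ with image $F$. I then define
$$T:\mathcal{D}(D_{i-1})/\ker(D_{i-1})\oplus V\longrightarrow E,\qquad T(x,v):=\widetilde{D}_{i-1}(x)+v.$$
This map is continuous and surjective by construction; it is also injective, because $\widetilde{D}_{i-1}(x)=-v\in F\cap V=\{0\}$ forces $v=0$ and then $x=0$ by injectivity of $\widetilde{D}_{i-1}$.

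With $T$ a continuous bijection between Banach spaces, the open mapping theorem makes $T$ a topological isomorphism. Consequently the algebraic splitting $E=F\oplus V$ is topological, so $F$ is a closed subspace of $E$. Combined with the fact that $F$ is dense in $E$, this forces $V=\{0\}$ and therefore $\im(D_{i-1})=\overline{\im(D_{i-1})}$, proving closedness. Feeding this back into Proposition \ref{beibei} gives $\ker(D_i)=\mathcal{H}^i\oplus\im(D_{i-1})$, hence the isomorphism $H^i(H_*,D_*)\cong\mathcal{H}^i(H_*,D_*)$.

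The only delicate point is the application of the open mapping theorem: one must carefully track that the graph norm makes $\mathcal{D}(D_{i-1})/\ker(D_{i-1})$ a genuine Hilbert space (so that its direct sum with the finite dimensional $V$ is again a Banach space), and that the resulting $T$ is both injective and surjective onto $E$. Once this is in place the rest is essentially formal.
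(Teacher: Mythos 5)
Your proof is correct. There is, strictly speaking, nothing in the paper to compare it against: the proposition is imported verbatim from Br\"uning--Lesch (\cite{BL}, Corollary 2.5) and the paper gives no argument for it, so you have supplied the missing proof, and your route --- the weak Kodaira decomposition of Proposition \ref{beibei} to identify $H^{i}(H_{*},D_{*})\cong\mathcal{H}^{i}\oplus\bigl(\overline{\im(D_{i-1})}/\im(D_{i-1})\bigr)$, followed by a closedness argument for finite-codimensional ranges via the open mapping theorem --- is essentially the standard one behind the cited result. One caution about your own summary of the reduction: the sentence ``a dense subspace of finite codimension of a Banach space is the whole space'' is false as stated; the kernel of a discontinuous linear functional is a dense subspace of codimension one. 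What is true, and what your detailed argument actually proves, is that a dense subspace of finite codimension which is in addition the \emph{range of a bounded operator from a Banach space} must be closed, hence equal to the whole space. The operator-range hypothesis, furnished here by $\widetilde{D}_{i-1}$ acting on $\mathcal{D}(D_{i-1})/\ker(D_{i-1})$ with the graph norm (complete because $D_{i-1}$ is a closed operator, with $\ker(D_{i-1})$ closed in that norm), is exactly what legitimizes the open mapping theorem step, and your map $T$ exploits it correctly: $T$ is a continuous bijection of Banach spaces, so it is a homeomorphism, so $F=T\bigl(\mathcal{D}(D_{i-1})/\ker(D_{i-1})\oplus\{0\}\bigr)$ is closed in $E$, and density then forces $F=E$. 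Since the detailed argument never invokes the false general claim, the proof stands; just state the auxiliary lemma accurately.
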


\begin{prop}
\label{worref}
The following properties are equivalent:
\begin{enumerate}
\item \eqref{mm} is a Fredholm complex.
\item The operator defined in \eqref{aster} is a Fredholm operator on its domain endowed with the graph norm.
\item For all $i=0,...,n$ $\Delta_i:\mathcal{D}(\Delta_i)\rightarrow H_i$ is a Fredholm operator on its domain endowed with the graph norm.
\end{enumerate}
\end{prop}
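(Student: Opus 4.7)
The plan is to prove the three equivalences by pivoting through condition (2), exploiting that $D+D^*$ is self-adjoint and that its square is $\bigoplus_i \Delta_i$. Throughout, the Kodaira decomposition of Proposition \ref{beibei} is the central computational tool, since it identifies
\[
\ker(D+D^*)=\bigoplus_i\mathcal{H}^i \quad\text{and}\quad \overline{\im(D+D^*)}\cap H_i=\overline{\im D_{i-1}}\oplus\overline{\im D_i^*},
\]
with the two summands on the right mutually orthogonal. Because $D+D^*$ is self-adjoint on its graph-norm domain, it is Fredholm if and only if its kernel is finite dimensional and its range is closed (cokernel equals kernel).

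For the implication (1)$\Rightarrow$(2), if the complex is Fredholm then Proposition \ref{topoamotore} gives $\mathcal{H}^i\cong H^i(H_*,D_*)$ finite dimensional and each $\im D_{i-1}$ closed; the closed range theorem then forces $\im D_i^*$ closed as well, so each direct summand $\im D_{i-1}\oplus\im D_i^*$ is closed, whence $\im(D+D^*)$ is closed. For the converse (2)$\Rightarrow$(1), the finiteness of $\ker(D+D^*)=\bigoplus_i\mathcal{H}^i$ directly gives each $\mathcal{H}^i$ finite dimensional. To recover closedness of $\im D_{i-1}$ from closedness of $\im(D+D^*)$, I would take $y\in\overline{\im D_{i-1}}$, use closedness to write $y=D_{i-1}x+D_i^*z$, and invoke the orthogonality $\overline{\im D_{i-1}}\perp\overline{\im D_i^*}$ to conclude $D_i^*z=0$, hence $y\in\im D_{i-1}$. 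Combining closed images with finite-dimensional harmonic spaces yields $H^i(H_*,D_*)\cong\mathcal{H}^i$ finite dimensional via Proposition \ref{topoamotore}.

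For (2)$\Leftrightarrow$(3), I would verify the algebraic identity $(D+D^*)^2=\bigoplus_i\Delta_i$ on the appropriate core (the operator $D+D^*$ being self-adjoint, the relation $\mathcal{D}((D+D^*)^2)=\mathcal{D}(\Delta)$ is essentially the content of \eqref{saed}, as noted in \cite{BL}). Since $\Delta$ is the orthogonal direct sum of the $\Delta_i$, it is Fredholm if and only if each $\Delta_i$ is. A self-adjoint operator $T$ has the same kernel as $T^2$ and $\overline{\im T}=\overline{\im T^2}$, with $\im T$ closed if and only if $\im T^2$ is closed (use the spectral theorem, or argue directly from $\|Tx\|^2=\langle T^2 x,x\rangle$ on $(\ker T)^\perp$). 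This gives (2)$\Leftrightarrow$(3) cleanly once $(D+D^*)^2=\Delta$ is established.

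The main obstacle I anticipate is the bookkeeping of domains in the step $(D+D^*)^2=\bigoplus_i\Delta_i$, since composition of unbounded operators is sensitive to domain issues; however the characterization \eqref{saed} of $\mathcal{D}(\Delta_i)$ is precisely arranged to make this equality hold, so this reduces to quoting \cite{BL}. The other delicate point is the orthogonal decomposition argument recovering closedness of the individual images $\im D_{i-1}$ from closedness of the total image, but the orthogonality in Proposition \ref{beibei} makes this routine. No subtler functional-analytic input beyond the closed range theorem and the self-adjointness of $D+D^*$ is required.
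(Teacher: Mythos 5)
Your proposal is essentially correct, and it is worth noting that the paper itself gives no argument at all here: its entire proof is the single line ``See \cite{BL} Theorem 2.4''. What you have written is therefore a self-contained reconstruction of the Br\"uning--Lesch argument, pivoting through condition (2) and using $(D+D^*)^2=\bigoplus_i\Delta_i$ together with the weak Kodaira decomposition; this is exactly the route the cited theorem takes, just made explicit. Your treatment of (2)$\Leftrightarrow$(3) in particular --- the domain bookkeeping showing $\mathcal{D}\bigl((D+D^*)^2\bigr)=\bigoplus_i\mathcal{D}(\Delta_i)$ via \eqref{saed} (which works because $\im(D_{i-1})\subseteq\ker(D_i)$ and $\im(D_i^*)\subseteq\ker(D_{i-1}^*)$ decouple the conditions degree by degree), and the fact that a self-adjoint operator is Fredholm iff its square is --- is correct as written.

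Two steps in (1)$\Leftrightarrow$(2) should be tightened. First, in (1)$\Rightarrow$(2) the phrase ``whence $\im(D+D^*)$ is closed'' hides the real work: closedness of each $\im(D_{i-1})\oplus\im(D_i^*)$ only helps once you know that $\im(D+D^*)$ actually \emph{equals} $\bigoplus_i\bigl(\im(D_{i-1})\oplus\im(D_i^*)\bigr)$; a non-closed subspace can perfectly well sit inside a closed one. The inclusion $\supseteq$ needs an adjustment of representatives, since a given $u\in\mathcal{D}(D_{i-1})$ need not lie in $\mathcal{D}(D_{i-1})\cap\mathcal{D}(D_{i-2}^*)$: take the Kodaira component $b$ of $u$ in $\overline{\im(D_{i-1}^*)}$; the other two components lie in $\ker(D_{i-1})$, so $b\in\mathcal{D}(D_{i-1})$ with $D_{i-1}b=D_{i-1}u$, while $b\in\overline{\im(D_{i-1}^*)}\subseteq\ker(D_{i-2}^*)$, hence $(D+D^*)b=D_{i-1}u$. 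This and its mirror image for $\im(D_i^*)$ supply the missing equality. (In (2)$\Rightarrow$(1) the analogous issue --- why an element $y\in\overline{\im(D_{i-1})}$ is of the form $(D+D^*)x$ at all --- can be settled more cheaply: $\overline{\im(D_{i-1})}$ is orthogonal to $\ker(D+D^*)$, hence contained in $\overline{\im(D+D^*)}=\im(D+D^*)$ once the range is closed.) Second, your final appeal to Proposition \ref{topoamotore} runs in the wrong direction: that proposition deduces closed range \emph{from} finite-dimensional cohomology, whereas you need that closed range together with $\dim\mathcal{H}^i<\infty$ gives finite-dimensional $H^i(H_*,D_*)$. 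This follows instead from \eqref{pppp}: closed range forces $H^i(H_*,D_*)=\overline{H}^i(H_*,D_*)\cong\mathcal{H}^i(H_*,D_*)$. Both repairs are routine, so these are gaps of exposition rather than of substance.
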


\begin{proof}
See \cite{BL} Theorem 2.4
\end{proof}

\begin{prop}
\label{fred}
[\cite{BL}, corollary 2.6] A Hilbert complex  $(H_{j},D_{j}),\ j=0,...,n$ is a Fredholm complex (weakly Fredholm) if and only if  its dual complex, $(H_{j},D_{j}^{*})$, is Fredholm (weakly Fredholm). In the Fredholm case we have:
\begin{equation}
\mathcal{H}^{i}(H_{j},D_{j})\cong H^{i}(H_{j},D_{j})\cong H^{n-i}(H_{j},(D_{j})^{*})\cong \mathcal{H}^{n-i}(H_{j},(D_{j})^{*}).
\end{equation}
Analogously in the  weak Fredholm case we have:
\begin{equation}
\mathcal{H}^{i}(H_{j},D_{j})\cong\overline{ H}^{i}(H_{j},D_{j})\cong\overline{ H}^{n-i}(H_{j},(D_{j})^{*})\cong \mathcal{H}^{n-i}(H_{j},(D_{j})^{*}).
\end{equation}
\end{prop}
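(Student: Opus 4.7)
The plan is to reduce the statement to the observation that the Laplacian of the dual complex at cohomological degree $j$ coincides, as a self-adjoint operator on $H_{n-j}$, with the Laplacian $\Delta_{n-j}$ of the original complex. To see this, I would first re-orient \eqref{mmp} as a standard co-chain complex by setting $E_j:=H_{n-j}$ and $\partial_j:=D_{n-j-1}^*:E_j\to E_{j+1}$. Because each $D_j$ is closed and densely defined one has $D_j^{**}=D_j$, so that
$$\partial_j^*\partial_j+\partial_{j-1}\partial_{j-1}^*=D_{n-j-1}D_{n-j-1}^*+D_{n-j}^*D_{n-j}=\Delta_{n-j}.$$
The domain condition \eqref{saed} is manifestly symmetric under swapping $D$ with $D^*$, so the two self-adjoint operators coincide on the same dense domain in $H_{n-j}$. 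In particular the harmonic spaces match: $\mathcal{H}^j(H_*,D_*^*)=\mathcal{H}^{n-j}(H_*,D_*)$.

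Given this identification, the weakly Fredholm part is immediate: finite dimensionality of $\mathcal{H}^i(H_*,D_*)$ for every $i$ is equivalent to the same property for the dual complex, and applying the weak de Rham isomorphism \eqref{pppp} on both sides then yields
$$\overline{H}^i(H_*,D_*)\cong\mathcal{H}^i(H_*,D_*)=\mathcal{H}^{n-i}(H_*,D_*^*)\cong \overline{H}^{n-i}(H_*,D_*^*),$$
which is precisely the chain of isomorphisms claimed in the weak Fredholm case.

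For the Fredholm statement I would invoke Proposition \ref{worref}: $(H_*,D_*)$ is Fredholm iff each $\Delta_i$ is a Fredholm operator on its domain with the graph norm. Since the Laplacians of the two complexes are the same operators up to the reindexing $j\leftrightarrow n-j$, the Fredholm property transfers from one to the other. Proposition \ref{topoamotore} then guarantees that the relevant images are closed, so unreduced cohomology coincides with the harmonic spaces in both complexes, giving the full four-term chain by concatenation. The main delicate point, which I expect to be the only real obstacle, is the bookkeeping with the indexing of the dual complex together with the verification that the four conditions defining $\mathcal{D}(\Delta_i)$ in \eqref{saed} are exactly symmetric under $D\leftrightarrow D^*$; this ensures that the Laplacian of the dual complex literally equals $\Delta_{n-j}$ as an unbounded self-adjoint operator with the same domain, and not merely as a formal algebraic expression.
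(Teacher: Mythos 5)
Your proof is correct. Note that the paper itself offers no proof of this proposition --- it is quoted directly from Br\"uning--Lesch [\cite{BL}, Corollary 2.6] --- but your argument is precisely the standard one behind that citation: after re-indexing the dual complex via $E_j:=H_{n-j}$, $\partial_j:=D_{n-j-1}^*$, the identity $D^{**}=D$ for closed densely defined operators shows that its Laplacians coincide, domain \eqref{saed} included, with the operators $\Delta_{n-j}$ of the original complex, so the harmonic spaces agree, the (weak) Fredholm property transfers by Proposition \ref{worref}, and the two chains of isomorphisms follow by combining this identification with \eqref{pppp} and Proposition \ref{topoamotore}. Your closing caveat is also the right one to check, and your verification that the four conditions in \eqref{saed} are symmetric under $D\leftrightarrow D^*$ (so the equality of Laplacians is an equality of unbounded self-adjoint operators, not just of formal expressions) is what makes the argument complete.
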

Now we recall some definitions from \cite{FBE}. We refer to the same paper for more properties and comments.
\begin{defi} 
\label{edera}
Consider  a pair of Hilbert complexes $(H_{i},D_{i})$ and $(H_{i},L_{i})$ with $i=0,...,n$. The pair  $(H_{i},D_{i})$ and $(H_{i},L_{i})$ is said to be \textbf{ complementary} if the following property is satisfied:
\begin{itemize}
\item for each $i$ there exists an isometry $\phi_{i}:H_{i}\longrightarrow H_{n-i}$ such that $\phi_i(\mathcal{D}(D_i))=\mathcal{D}(L^*_{n-i-1})$ and  $L_{n-i-1}^*\circ\phi_{i}=C_{i}(\phi_{i+1}\circ D_{i})$ on $\mathcal{D}(D_{i})$
 where $ L^{*}_{n-i-1}:H_{n-i}\longrightarrow H_{n-i-1}$ is the adjoint of $L_{n-i-1}:H_{n-i-1}\longrightarrow H_{n-i}$ 
 and $C_{i}\neq 0$ is a constant which depends only on $i$.
\end{itemize}
We call the maps $\phi_{i}$  duality maps.
\end{defi}

We have the following proposition:
\begin{prop}
\label{errew}
Let $(H_{i},D_{i})$ and $(H_{i},L_{i})$ be complementary Hilbert complexes. Then:
\begin{enumerate}
\item Also  $(H_{i},L_{i})$ and  $(H_{i},D_{i})$ are complementary  Hilbert complexes. Moreover if $\{\phi_{i}\}$ are the duality maps which make $(H_{i},D_{i})$ and $(H_{i},L_{i})$ complementary  then  $\{\phi_{i}^{*}\}$, the family obtained taking the adjoint maps,  are the duality maps which make $(H_{i},L_{i})$ and $(H_{i},D_{i})$  complementary.
\item Each $\phi_{j}$ induces an isomorphism between $\mathcal{H}^{j}(H_{*},D_{*})$ and $\mathcal{H}^{n-j}(H_{*},L_{*})$.
\item The complexes $(H_{i},D_{i})$ and $(H_{i},L_{i}^{*})$ have isomorphic cohomology groups and isomorphic reduced cohomology groups. In the same way the complexes $(H_{i},L_{i})$ and $(H_{i},D_{i}^{*})$ have isomorphic cohomology groups and isomorphic reduced cohomology groups.
\item The following isomorphism holds: $\overline{H}^{j}(H_{*},D_{*})\cong \overline{H}^{n-j}(H_{*},L_{*}).$
\end{enumerate}
\end{prop}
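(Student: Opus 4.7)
My plan is to dispatch the four parts in the order they are stated, since (2) uses (1), (3) runs a parallel argument, and (4) combines (2) with the weak Kodaira decomposition \eqref{pppp}. The underlying idea is that the unitarity of each $\phi_i$ together with the twisted intertwining $L^*_{n-i-1}\circ\phi_i=C_i\,\phi_{i+1}\circ D_i$ realizes $\phi$, up to nonzero scalars, as an isomorphism between $(H_*, D_*)$ and the reversed complex $(H_{n-*}, L^*_{n-*-1})$; once this is installed, the remaining parts become bookkeeping with kernels and images under unitary maps.

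For part (1) I would take Hilbert space adjoints of the defining identity. Since $\phi_i$ is unitary and maps $\mathcal{D}(D_i)$ onto $\mathcal{D}(L^*_{n-i-1})$, the composition $L^*_{n-i-1}\circ\phi_i$ is closed with domain $\mathcal{D}(D_i)$, and a short inner-product computation identifies its adjoint as $\phi_i^*\circ L_{n-i-1}$ on $\mathcal{D}(L_{n-i-1})$; adjointing the right-hand side yields $C_i\,D_i^*\circ\phi_{i+1}^*$ on $\phi_{i+1}(\mathcal{D}(D_i^*))$. Equating these two closed operators forces both the identity $\phi_i^*\circ L_{n-i-1}=C_i\,D_i^*\circ\phi_{i+1}^*$ and the domain equality $\phi_{i+1}^*(\mathcal{D}(L_{n-i-1}))=\mathcal{D}(D_i^*)$. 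Reindexing via $k=n-i-1$ and setting $\psi_k:=\phi_{n-k}^*$ (again a unitary) delivers exactly the complementarity of $(H_i, L_i)$ and $(H_i, D_i)$ with nonzero constants $C'_k=C_{n-k-1}^{-1}$.

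For part (2), the relation $L^*_{n-j-1}\circ\phi_j=C_j\,\phi_{j+1}\circ D_j$ with $C_j\neq 0$ immediately gives $\phi_j(\ker D_j)=\ker L^*_{n-j-1}$, while the twin relation from part (1) yields $\phi_j(\ker D^*_{j-1})=\ker L_{n-j}$; intersecting and using \eqref{said} produces $\phi_j(\mathcal{H}^j(H_*,D_*))=\mathcal{H}^{n-j}(H_*,L_*)$, and unitarity upgrades this to the claimed isomorphism. Part (3) is a similar but easier trace: $\phi_i$ carries $\ker D_i$ onto $\ker L^*_{n-i-1}$ and $\im D_{i-1}$ onto $\im L^*_{n-i}$, closures onto closures, producing unitary isomorphisms $H^i(H_*,D_*)\cong H^i(H_*,L^*_*)$ and $\overline{H}^i(H_*,D_*)\cong\overline{H}^i(H_*,L^*_*)$; the parallel statement for $(H_*, L_*)$ and $(H_*, D^*_*)$ follows by feeding the complementarity from part (1) into the same argument. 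Part (4) is then the chain $\overline{H}^j(H_*,D_*)\cong\mathcal{H}^j(H_*,D_*)\cong\mathcal{H}^{n-j}(H_*,L_*)\cong\overline{H}^{n-j}(H_*,L_*)$ via \eqref{pppp} and part (2).

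The main obstacle I anticipate is in part (1): I have to verify rigorously that equating adjoints yields not only the intertwining identity but also the domain equality $\phi_{i+1}^*(\mathcal{D}(L_{n-i-1}))=\mathcal{D}(D_i^*)$ that Definition \ref{edera} demands, which requires care with the domain of the adjoint of a composition of a bounded unitary with a closed unbounded operator. Once part (1) is secured, the rest are essentially unitary diagram chases.
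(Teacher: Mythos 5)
Your argument is correct, and it supplies something the paper itself does not contain: for Proposition \ref{errew} the paper only writes ``See \cite{FBE} Prop.\ 5'', so there is no in-paper proof to compare against, and your write-up is a legitimate self-contained replacement. The step you flagged as the main obstacle does go through exactly as you planned: setting $T:=L^*_{n-i-1}\circ\phi_i=C_i\,\phi_{i+1}\circ D_i$ (a closed, densely defined operator with domain $\mathcal{D}(D_i)$), the standard rules $(A\phi)^*=\phi^*A^*$ for $\phi$ unitary and $(\phi A)^*=A^*\phi^*$ for $\phi$ bounded give two descriptions of $T^*$, namely $\phi_i^*\circ L_{n-i-1}$ with domain $\mathcal{D}(L_{n-i-1})$ and $C_i\,D_i^*\circ\phi_{i+1}^*$ with domain $\phi_{i+1}(\mathcal{D}(D_i^*))$; equating them yields both the intertwining identity and the domain equality $\phi_{i+1}^*(\mathcal{D}(L_{n-i-1}))=\mathcal{D}(D_i^*)$, and the reindexing $\psi_k:=\phi_{n-k}^*$ with constants $C_{n-k-1}^{-1}$ is the right bookkeeping. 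The one point you should make explicit is that these adjoint rules require $\phi_i$ to be \emph{unitary}, while Definition \ref{edera} only calls $\phi_i$ an isometry; this is harmless because surjectivity is automatic: $\im(\phi_i)$ is closed (isometric image) and contains $\phi_i(\mathcal{D}(D_i))=\mathcal{D}(L^*_{n-i-1})$, which is dense in $H_{n-i}$, so $\phi_i$ is onto. With that remark inserted, the remaining parts are complete as you describe: for (2), $\phi_j(\ker D_j)=\ker L^*_{n-j-1}$ uses both inclusions (the reverse one via the domain condition), and the dual relation from (1) at index $k=n-j$ gives $\phi_j(\ker D^*_{j-1})=\ker L_{n-j}$, so \eqref{said} finishes it; for (3), $\phi_i(\im D_{i-1})=\im L^*_{n-i}$ follows from the intertwining relation in both directions, and a unitary carries closures to closures, which handles the reduced groups with the paper's indexing convention for the dual complex; (4) is the sandwich with \eqref{pppp}.
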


\begin{proof}
See \cite{FBE} Prop. 5.
\end{proof}

Finally, given a pair of Hilbert complexes $(H_{j},D_{j})$ and $(H_{j},D'_{j})$, we will write $(H_{j},D_{j})\subseteq  (H_{j},D'_{j}) $ if, for each $j$,  $D'_j$ extends $D_j$. We will write $(H_{j},D_{j})\subset  (H_{j},D'_{j})$ if $D_j\neq D_j'$ for at least one $j$. 
We are now in position to prove the main results of this section:

\begin{teo}
\label{berlino}
Let $(H_{j},D_{j})\subseteq (H_{j},L_{j})$ be  a pair of complementary Hilbert complexes. Then, for every $j=0,...,n$, we have the following isomorphism:  
\begin{equation}
\ker(L_{j})/(\overline{\im(D_{j-1})})\cong \ker(L_{n-j})/(\overline{\im(D_{n-j-1})}).
\label{kiol}
\end{equation}
\end{teo}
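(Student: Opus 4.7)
The strategy is to interpret both sides of the claimed isomorphism as Hodge-like mixed harmonic spaces inside $H_j$ and $H_{n-j}$, and to relate these via the complementary duality maps. Because $D_{j-1}\subseteq L_{j-1}$ and $L_jL_{j-1}=0$, one has $\overline{\im(D_{j-1})}\subseteq\ker(L_j)$; combining the Hilbert-space identity $B/A\cong B\cap A^{\perp}$ for closed subspaces $A\subseteq B$ with $\overline{\im(D_{j-1})}^{\perp}=\ker(D^*_{j-1})$, one obtains
$$\ker(L_j)/\overline{\im(D_{j-1})}\;\cong\;\mathcal{H}^j_{\mathfrak{M}}\;:=\;\ker(L_j)\cap\ker(D^*_{j-1}),$$
so the theorem reduces to producing a Hilbert-space isomorphism $\mathcal{H}^j_{\mathfrak{M}}\cong\mathcal{H}^{n-j}_{\mathfrak{M}}$.

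The next ingredient is a ``mixed Kodaira'' decomposition. Since $L_jD_{j-1}=0$, the closed subspaces $\overline{\im(D_{j-1})}$ and $\overline{\im(L^*_j)}$ are orthogonal in $H_j$; computing $(\mathcal{H}^j_{\mathfrak{M}})^{\perp}=\ker(L_j)^{\perp}+\ker(D^*_{j-1})^{\perp}=\overline{\im(L^*_j)}+\overline{\im(D_{j-1})}$ (closed, being a sum of orthogonal closed summands) gives
$$H_j=\mathcal{H}^j_{\mathfrak{M}}\oplus\overline{\im(D_{j-1})}\oplus\overline{\im(L^*_j)},$$
and the analogous decomposition at degree $n-j$.

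The duality inputs are then transported via the complementary maps. The $\phi$-relation $L^*_{n-j}\phi_{j-1}=C(\phi_jD_{j-1})$ on $\mathcal{D}(D_{j-1})$, combined with continuity of the isometry $\phi_j$, yields $\phi_j(\overline{\im(D_{j-1})})=\overline{\im(L^*_{n-j})}$. By Proposition 1.5(1), the adjoint maps $\psi_i:=\phi^*_{n-i}$ are duality maps for the swapped pair $(L,D)$ and satisfy $\psi_j(\ker(L_j))=\ker(D^*_{n-j-1})$; taking orthogonal complements in $H_{n-j}$ gives $\psi_j(\overline{\im(L^*_j)})=\overline{\im(D_{n-j-1})}$. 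Applying the isometry $\phi_j$ to the mixed Kodaira at $j$ and matching it against the mixed Kodaira at $n-j$, the orthogonal complement of $\overline{\im(L^*_{n-j})}$ in $H_{n-j}$ is simultaneously $\phi_j(\mathcal{H}^j_{\mathfrak{M}}\oplus\overline{\im(L^*_j)})$ and $\mathcal{H}^{n-j}_{\mathfrak{M}}\oplus\overline{\im(D_{n-j-1})}$, so
$$\mathcal{H}^j_{\mathfrak{M}}\oplus\overline{\im(L^*_j)}\;\cong\;\mathcal{H}^{n-j}_{\mathfrak{M}}\oplus\overline{\im(D_{n-j-1})}$$
as Hilbert spaces.

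To finish, I combine these ingredients into a single global isometry $\Phi:H_j\to H_{n-j}$ that agrees with $\phi_j$ on $\overline{\im(D_{j-1})}$ and with $\psi_j$ on $\overline{\im(L^*_j)}$: the target subspaces $\overline{\im(L^*_{n-j})}$ and $\overline{\im(D_{n-j-1})}$ of these partial isometries are orthogonal summands of $H_{n-j}$, so the two pieces assemble into an isometry of $H_j\ominus\mathcal{H}^j_{\mathfrak{M}}$ onto $H_{n-j}\ominus\mathcal{H}^{n-j}_{\mathfrak{M}}$, and any isometric completion $\Phi$ restricts to the sought isomorphism $\mathcal{H}^j_{\mathfrak{M}}\to\mathcal{H}^{n-j}_{\mathfrak{M}}$. \textbf{The main obstacle} is precisely this assembly step: since neither $\phi_j$ nor $\psi_j$ individually sends $\mathcal{H}^j_{\mathfrak{M}}$ into $\mathcal{H}^{n-j}_{\mathfrak{M}}$, one cannot conclude by direct pushforward and must exploit \emph{both} duality maps together, carefully checking that the combined partial isometry extends to a Hilbert-space isomorphism of the remaining harmonic summands.
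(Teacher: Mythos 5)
Your preparatory steps are all correct and run parallel to the paper's: the identification $\ker(L_{j})/\overline{\im(D_{j-1})}\cong\ker(L_{j})\cap\ker(D_{j-1}^{*})$ (the paper gets it by viewing $D_{0},\dots,D_{j-1},L_{j},\dots,L_{n-1}$ as a Hilbert complex and invoking \eqref{said} and \eqref{pppp}; your direct quotient argument is equivalent), the mixed Kodaira decomposition $H_{j}=\mathcal{H}^{j}_{\mathfrak{M}}\oplus\overline{\im(D_{j-1})}\oplus\overline{\im(L^{*}_{j})}$, and the transport identities $\phi_{j}\bigl(\overline{\im(D_{j-1})}\bigr)=\overline{\im(L^{*}_{n-j})}$ and $\phi^{*}_{n-j}\bigl(\overline{\im(L^{*}_{j})}\bigr)=\overline{\im(D_{n-j-1})}$ all check out. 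The fatal problem is the assembly step. From your two partial isometries you obtain a unitary $U:H_{j}\ominus\mathcal{H}^{j}_{\mathfrak{M}}\to H_{n-j}\ominus\mathcal{H}^{n-j}_{\mathfrak{M}}$, and you then appeal to ``any isometric completion $\Phi$.'' But a unitary extension of $U$ to all of $H_{j}$ exists \emph{if and only if} $\mathcal{H}^{j}_{\mathfrak{M}}$ and $\mathcal{H}^{n-j}_{\mathfrak{M}}$ have the same Hilbert dimension, which is exactly the statement to be proved: the argument is circular. Nor can the conclusion be extracted from the isomorphisms you actually established: in infinite dimensions, $H_{j}\cong H_{n-j}$ together with $H_{j}\ominus V\cong H_{n-j}\ominus W$ does \emph{not} force $V\cong W$. (Take $H_{j}=H_{n-j}=\ell^{2}$, $V$ one--dimensional, $W=\{0\}$: both complements are isometric to $\ell^{2}$, a unitary $U$ between them exists, yet it admits no unitary completion and $V\not\cong W$.) So the data you assembled --- two \emph{different} unitaries, $\phi_{j}$ matching one pair of summands and $\phi^{*}_{n-j}$ matching the other --- genuinely do not suffice; the obstacle you flag at the end is not a technical check you postponed, it is the whole content of the theorem.

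The paper closes precisely this gap by a different mechanism: it shows, from Definition \ref{edera} together with Proposition \ref{errew}, that a \emph{single} duality map restricts correctly on both kernels at once, namely that $\phi_{n-j}$ induces an isomorphism between $\ker(L_{j})$ and $\ker(D^{*}_{n-j-1})$ \emph{and} between $\ker(L_{n-j})$ and $\ker(D^{*}_{j-1})$. Since one bijection realizes both correspondences, it carries the intersection $\ker(L_{j})\cap\ker(D^{*}_{j-1})$ bijectively onto $\ker(L_{n-j})\cap\ker(D^{*}_{n-j-1})$, and the theorem follows from \eqref{samaria}; no completion argument is needed. This single--map property is the one ingredient your write-up concedes is missing (``neither $\phi_{j}$ nor $\psi_{j}$ individually sends $\mathcal{H}^{j}_{\mathfrak{M}}$ into $\mathcal{H}^{n-j}_{\mathfrak{M}}$''), and it is what must be supplied rather than circumvented; note that it is immediate whenever the duality maps satisfy $\phi_{n-i}=\pm\phi_{i}^{-1}$ (so that $\phi^{*}_{n-j}=\pm\phi_{j}$ and your two maps coincide up to sign), which is exactly the situation of the paper's application, where $\phi_{i}$ is the Hodge star operator, and is the compatibility assumed explicitly in Theorem \ref{dualcomplex}.
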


\begin{proof}
The fact that $L_j$ is an extension of $D_j$ implies that, the complex below is well defined  for each  $j=0,...,n$
\begin{equation}
0\rightarrow H_{0}\stackrel{D_{0}}{\rightarrow}H_{1}\stackrel{D_{1}}{\rightarrow}...\stackrel{D_{j-1}}{\rightarrow}H_{j}\stackrel{L_{j}}{\rightarrow}...\stackrel{L_{n-1}}{\rightarrow}H_{n}\rightarrow 0.
\label{mim}
\end{equation}
The dual Hilbert complex is clearly:
\begin{equation}
0\leftarrow H_{0}\stackrel{D_{0}^{*}}{\leftarrow}...\stackrel{D_{j-1}^{*}}{\leftarrow}H_{j}\stackrel{L_{j}^{*}}{\leftarrow}...\stackrel{L_{n-1}^{*}}{\leftarrow}H_{n}\leftarrow 0,
\label{mmpp}
\end{equation}
Therefore, by \eqref{said} and \eqref{pppp}, we get that:
\begin{equation}
\label{samaria}
\ker(L_j)/(\overline{\im(D_{j-1})})\cong \ker(L_j)\cap \ker(D_{j-1}^*).
\end{equation} By Def. \ref{edera} and Prop. \ref{errew} we know that $\phi_{n-j}$ induces an isomorphism between $\ker(L_j)$ and $\ker(D^*_{n-j-1})$ and between $\ker(L_{n-j})$ and $\ker (D^*_{j-1})$. Therefore it induces an isomorphism between  $\ker(L_j)\cap \ker(D^*_{j-1})$ and $\ker(D^*_{n-j-1})\cap \ker(L_{n-j})$. In this way, using \eqref{samaria}, we get:
$$\ker(L_j)/(\overline{\im(D_{j-1})})\cong \ker(L_j)\cap \ker(D_{j-1}^*)\cong$$ $$\cong \ker(D^*_{n-j-1})\cap \ker(L_{n-j})\cong \ker(L_{n-j})/(\overline{\im(D_{n-j-1})})$$  and this completes the proof.
\end{proof}

\begin{teo}
\label{cheafa}
Let  $(H_{j},D_{j})\subseteq (H_{j},L_{j})$,  $j=0,...,n$, be a pair of  Hilbert complexes. Suppose that, for each $j$, $\im(D_{j})$ is closed in $H_{j+1}$. Then there exists a third Hilbert complex $(H_{j},P_{j})$ such that: 
\begin{enumerate}
\item  $(H_{j},D_{j})\subseteq (H_{j},P_{j})\subseteq (H_{j},L_{j})$ and the image of  $P_j$ is closed for each $j$.
\item $H^j(H_{*},P_*)=\ker(L_{j})/(\im(D_{j-1}))$. 
\item  If $(H_{j},D_{j})\subseteq (H_{j},L_{j})$ are complementary then: $$H^j(H_{*},P_{*})\cong H^{n-j}(H_{*},P_{*}).$$
\end{enumerate}
\end{teo}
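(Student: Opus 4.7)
The natural candidate for the intermediate complex is the one whose differential is the restriction of $L_j$ to those elements that get mapped into $\im(D_j)$. Concretely, I would set
\begin{equation*}
\mathcal{D}(P_j) := \{u \in \mathcal{D}(L_j) : L_j u \in \im(D_{j})\}, \qquad P_j := L_j|_{\mathcal{D}(P_j)}.
\end{equation*}
Since $L_j$ extends $D_j$, for every $u\in\mathcal{D}(D_j)$ we have $L_ju=D_ju\in\im(D_j)$, so $\mathcal{D}(D_j)\subseteq\mathcal{D}(P_j)\subseteq\mathcal{D}(L_j)$ gives the inclusion $(H_j,D_j)\subseteq(H_j,P_j)\subseteq(H_j,L_j)$ as well as density of $\mathcal{D}(P_j)$.

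\textbf{Steps.} First I would check that $(H_j,P_j)$ is actually a Hilbert complex. Closedness of $P_j$ follows from the closedness of $L_j$ together with the crucial hypothesis that $\im(D_j)$ is closed: if $u_n\to u$ and $P_ju_n\to v$, then $v=L_ju\in\overline{\im(D_j)}=\im(D_j)$. For the complex property, if $u\in\mathcal{D}(P_j)$ then $P_ju=D_jw$ for some $w\in\mathcal{D}(D_j)\subseteq\mathcal{D}(L_j)$, and $L_j w=D_jw=P_ju$ so $P_ju\in\mathcal{D}(L_{j+1})$ with $L_{j+1}(P_ju)=L_{j+1}(L_jw)=0\in\im(D_{j+1})$; hence $P_ju\in\mathcal{D}(P_{j+1})$ and $P_{j+1}P_j=0$. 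Second, directly from the definition $\im(P_j)\subseteq\im(D_j)$, while the other inclusion is immediate because $D_j\subseteq P_j$; thus $\im(P_j)=\im(D_j)$ is closed. Third, $\ker(P_j)=\{u\in\mathcal{D}(L_j): L_ju=0\}=\ker(L_j)$ since $0\in\im(D_j)$, so
\begin{equation*}
H^j(H_*,P_*)=\ker(P_j)/\im(P_{j-1})=\ker(L_j)/\im(D_{j-1}),
\end{equation*}
which is assertion (2).

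\textbf{Poincar\'e duality.} For the third item, assuming $(H_j,D_j)\subseteq(H_j,L_j)$ are complementary, I would simply invoke Theorem \ref{berlino}: it yields $\ker(L_j)/\overline{\im(D_{j-1})}\cong\ker(L_{n-j})/\overline{\im(D_{n-j-1})}$, and since $\im(D_{j-1})$ is closed for every $j$ by hypothesis, $\overline{\im(D_{j-1})}=\im(D_{j-1})$, so this isomorphism is precisely $H^j(H_*,P_*)\cong H^{n-j}(H_*,P_*)$.

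\textbf{Main obstacle.} The conceptual step is guessing the correct domain; once it is written down, all verifications are routine. The only point that requires a little care is the verification that $P_{j+1}P_j=0$, because membership in $\mathcal{D}(P_{j+1})$ needs $L_{j+1}(P_ju)\in\im(D_{j+1})$, and one has to use the existence of a preimage $w\in\mathcal{D}(D_j)$ of $P_j u$ under $D_j$ (which is where the closedness hypothesis on $\im(D_j)$, encoded as $\im(P_j)=\im(D_j)$, is implicitly needed) to cross from the $L$-complex relations back to a valid element of $\mathcal{D}(P_{j+1})$. Everything else is bookkeeping.
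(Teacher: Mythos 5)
Your proposal is correct, and the construction is genuinely different from the paper's. The paper builds $P_j$ by working inside the graph Hilbert space $(\mathcal{D}(L_j),\langle\,,\,\rangle_{\mathcal{G}})$: it decomposes $\mathcal{D}(L_j)=\ker(L_j)\oplus V_j$ and $\mathcal{D}(D_j)=\ker(D_j)\oplus A_j$, proves that the orthogonal projection $\pi_{2,j}:A_j\to V_j$ is injective with closed range $N_j$, and then sets $\mathcal{D}(P_j):=\ker(L_j)\oplus N_j$, $P_j:=L_j|_{\ker(L_j)\oplus N_j}$; the closed-range hypothesis on $\im(D_j)$ enters through a limiting argument showing $\im(\pi_{2,j})$ is closed. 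Your definition $\mathcal{D}(P_j)=\{u\in\mathcal{D}(L_j):L_ju\in\im(D_j)\}$ produces the same operator — if $L_ju=D_ja$ then $u=(u-a)+a$ with $u-a\in\ker(L_j)$, which recovers the paper's splitting — but it makes every verification essentially immediate: closedness of $P_j$ falls out of closedness of $L_j$ plus closedness of $\im(D_j)$, and $\ker(P_j)=\ker(L_j)$, $\im(P_j)=\im(D_j)$ are read off from the definition rather than extracted from the projection machinery. For statement (3) you and the paper do the same thing: combine statement (2) with Theorem \ref{berlino}, using closedness of the images to drop the closures. What your route gives up is the explicit orthogonal decomposition $\mathcal{D}(P_j)=\ker(L_j)\oplus N_j$ and the projections $\pi_{1,j},\pi_{2,j}$, which the paper reuses heavily in the proof of Theorem \ref{dualcomplex} to identify the adjoint complex $(H_j,P_j^*)$ and to prove the commutation relations with the duality maps; so the paper's heavier construction is an investment for later, while yours is the cleaner proof of this theorem taken in isolation.
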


\begin{proof}
To prove the first part of the proposition we have to exhibit a Hilbert complex which satisfies the assertions of the statement. To do this consider the following Hilbert space $$(\mathcal{D}(L_j),\langle\ ,\ \rangle_{\mathcal{G}})$$ which is by definition  the domain of $L_j$ endowed with the graph scalar product, that is for each pair of  elements $u, v\in \mathcal{D}(L_j)$ we have $$\langle u,v\rangle_{\mathcal{G}}:=\langle u,v\rangle_{H_j}+\langle L_ju,L_jv\rangle_{H_{j+1}}.$$ During the rest of the proof we will work with this Hilbert space and therefore  all the direct sums that will appear  and all the assertions of topological type are referred to this Hilbert space $(\mathcal{D}(L_j),\langle\ ,\ \rangle_{\mathcal{G}})$. We can decompose $(\mathcal{D}(L_j),\langle\ ,\ \rangle_{\mathcal{G}})$ in the following way:
\begin{equation}
\label{wsws}
(\mathcal{D}(L_j),\langle\ ,\ \rangle_{\mathcal{G}})=\ker(L_j)\oplus V_j
\end{equation}
where $V_j=\{\alpha\in \mathcal{D}(L_j)\cap \overline{\im(L_{j}^*)}\}$ and it is immediate to check that these subspaces are both closed  in $(\mathcal{D}(L_j),\langle\ ,\ \rangle_{\mathcal{G}})$.\\Consider now $(\mathcal{D}(D_j),\langle\ ,\ \rangle_{\mathcal{G}})$; it is a closed subspace of $(\mathcal{D}(L_j),\langle\ , \rangle_{\mathcal{G}})$ and we can decompose it as 
\begin{equation}
\label{wswsa}
(\mathcal{D}(D_j),\langle\ ,\ \rangle_{\mathcal{G}})=\ker(D_j)\oplus A_j. 
\end{equation}
By the assumption on the range of $D_j$ we get that also the range of $D_{j}^*$ is closed. So,  analogously to the previous case,  $A_j=\{\alpha\in \mathcal{D}(D_j)\cap \im(D_{j}^*)\}$ and obviously these subspaces are both closed  in $(\mathcal{D}(D_j),\langle\ ,\ \rangle_{\mathcal{G}})$. 
Clearly if $\ker(D_j)=\ker(L_j)$ then the Hilbert complex $(H_j,D_j)$ satisfies the first two properties of the statement, that is defining $(H_j,P_j)$ as $(H_j,D_j)$, we have $(H_{j},D_{j})\subseteq (H_{j},P_{j})\subseteq (H_{j},L_{j})$, the image of  $P_j$ is closed for each $j$ and $H^j(H_{*},P_*)=\ker(L_{j})/(\im(D_{j-1}))$. So we can suppose that $\ker(D_j)$ is properly contained in $\ker(L_j)$. Let $\pi_{1,j}$ be the orthogonal projection of $A_j$ onto $\ker(L_j)$ and analogously let $\pi_{2,j}$ be the orthogonal projection of $A_j$ onto $V_j$. We have the following properties:
\begin{enumerate}
\item $\pi_{2,j}$ is injective 
\item $\im(\pi_{2,j})$ is closed.
\end{enumerate}
The first property follows from the fact that  $\ker(\pi_{2,j})=A_j\cap \ker(L_j)$. But $L_j$ is an extension of $D_j$; therefore if an element $\alpha$ lies in  $A_j\cap \ker(L_j)$ then it lies also in $\ker(D_j)$ and so $\alpha=0$ because $\ker(D_j)\cap A_j=\{0\}$.  For the second property consider a sequence $\{\gamma_{m}\}_{m\in \mathbb{N}}\subset A_j$ such that $\pi_{2,j}(\gamma_{m})$ converges to $\gamma\in V_j$. We recall that we are in $(\mathcal{D}(L_j),\langle\ ,\ \rangle_{\mathcal{G}})$ and therefore this means that $$\lim_{m\rightarrow \infty}\pi_{2,j}(\gamma_m)= \gamma\ \text{in}\ H_j\ \text{and}\ \lim_{m\rightarrow\infty}L_j(\pi_{2,j}(\gamma_m))= L_j(\gamma)\ \text{in}\ H_{j+1}.$$ Then $$\lim_{m\rightarrow \infty}D_{j}(\gamma_m)=\lim_{m\rightarrow \infty}L_{j}(\gamma_m)=\lim_{m\rightarrow \infty}L_{j}(\pi_{2,j}(\gamma_m))=L_j(\gamma).$$ This implies that $$\lim_{m\rightarrow \infty}D_{j}(\gamma_m)=L_j(\gamma)$$ and therefore the limit  exists. So by the assumptions about the range of $D_j$ we get that there exists an element $\eta\in A_j$ such that $$\lim_{m\rightarrow \infty}D_{j}(\gamma_m)=D_{j}(\eta).$$Moreover $L_{j}(\gamma)=D_{j}(\eta)=L_{j}(\eta)=L_{j}(\pi_{2,j}(\eta))$. This implies that $L_j(\pi_{2,j}(\eta)-\gamma)=0$ and therefore $\pi_{2,j}(\eta)=\gamma$ because $\pi_{2,j}(\eta),\gamma \in V_j$ and $L_i$ is injective on $V_j$. In this way we have shown that $\im(\pi_{2,j})$ is closed.\\
Now define $N_j$ as the range of $\pi_{2,j}$. Finally define $W_j$ as the vector space generated by the sum of $\ker(L_j)$ and $N_j$. By the fact that $\ker(L_j)$ and $N_j$ are orthogonal to each other we have $W_j= \ker(L_j)\oplus N_j$ and therefore $W_j$ is closed in $ (\mathcal{D}(L_j), \langle\ ,\ \rangle_{\mathcal{G}})$. Finally define $P_j$ as 
\begin{equation}
\label{toposorce}
P_j:=L_j|_{W_j}
\end{equation}  
By the fact that  $W_j$ is closed in $\mathcal{D}(L_j)$  and that $\pi_{1,j}(A_j),\pi_{2,j}(A_j)\subset W_j$ we get that $P_j$ is a closed extension of $D_j$ which is in turn  extended by $L_j$. Moreover, by the construction, it is clear that $\ker(P_j)=\ker(L_j)$. Finally, again by the definition of $P_j$ and its domain, we have $\im(P_j)=L_j(\pi_{2,j}(A_j))=\im(D_j)$. Therefore we got that $\im(P_j)$ is closed and that $$\ker(P_j)/\im(P_{j-1})=\ker(L_j)/\im{D_{j-1}}.$$ This completes the proof of the first two statements.\\ Finally, combining the second statement of this Theorem with Theorem \ref{berlino}, the third statement follows. 
\end{proof}

For the dual complex of $(H_i,P_i)$ we have the following description:
\begin{teo}
\label{dualcomplex}
Under the hypotheses of Theorem \ref{cheafa}. Assume moreover that the image of $L_i$, $\im(L_i)$,  is closed for each $i=0,...,n$. Consider the Hilbert complexes: 
\begin{equation}
\label{topofulmine}
0\leftarrow H_{0}\stackrel{D_{0}^{*}}{\leftarrow}H_{1}\stackrel{D_{1}^{*}}{\leftarrow}H_{2}\stackrel{D_{2}^{*}}{\leftarrow}...\stackrel{D_{n-1}^{*}}{\leftarrow}H_{n}\leftarrow 0,
\end{equation}
and 
\begin{equation}
\label{topoenergia}
0\leftarrow H_{0}\stackrel{L_{0}^{*}}{\leftarrow}H_{1}\stackrel{L_{1}^{*}}{\leftarrow}H_{2}\stackrel{L_{2}^{*}}{\leftarrow}...\stackrel{L_{n-1}^{*}}{\leftarrow}H_{n}\leftarrow 0.
\end{equation}
Let 
\begin{equation}
\label{topoenergiapura}
0\leftarrow H_{0}\stackrel{S_{0}}{\leftarrow}H_{1}\stackrel{S_{1}}{\leftarrow}H_{2}\stackrel{S_{2}}{\leftarrow}...\stackrel{S_{n-1}}{\leftarrow}H_{n}\leftarrow 0
\end{equation}
be the intermediate complex, which extends \eqref{topoenergia} and which is extended by \eqref{topofulmine}, constructed according to Theorem \ref{cheafa} (and its proof). Then, for each $i=0,...,n$,  we have:
\begin{equation}
\label{topotuono}
P_i^*=S_i.
\end{equation}
Furthermore assume that $(H_i,D_i)$ and $(H_i,L_i)$ are complementary (in this case the fact that $\im(D_i)$ is closed implies that $\im(L_i)$ is closed). Let $\{\phi_i\}$ be the duality maps and suppose that $\phi_i^{-1}=\pm \phi_{n-i}$. Then we have: 
\begin{equation}
\label{topocentrifuga}
S_i=C_i^{-1}\phi_{i}^{-1}\circ P_{n-i-1}\circ \phi_{i+1}=\pm C_i^{-1}\phi_{n-i}\circ P_{n-i-1}\circ \phi_{i+1}.
\end{equation}
\end{teo}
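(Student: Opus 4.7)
Our plan for \eqref{topotuono} is to characterize both $P_j$ and the candidate $S_j$ by a clean description of their domains, and then identify $P_j^{*}$ with $S_j$ through two inclusions; for \eqref{topocentrifuga} we translate the complementarity relations through Hilbert space adjoints. Revisiting the proof of Theorem \ref{cheafa}, the identity $L_j\pi_{2,j}(\alpha)=D_j\alpha$ for $\alpha\in A_j$ shows that
\[
\mathcal{D}(P_j)=\ker(L_j)\oplus N_j=\{u\in \mathcal{D}(L_j):L_j u\in \im(D_j)\},\qquad P_j=L_j|_{\mathcal{D}(P_j)}.
\]
Since $\im(L_j)$ is closed (automatic by complementarity from closedness of $\im(D_j)$), so is $\im(L_j^{*})$, and Theorem \ref{cheafa} applied to the dual pair $(H_j,L_j^{*})\subseteq(H_j,D_j^{*})$ yields the analogous description
\[
\mathcal{D}(S_j)=\{v\in \mathcal{D}(D_j^{*}):D_j^{*}v\in \im(L_j^{*})\},\qquad S_j=D_j^{*}|_{\mathcal{D}(S_j)}.
\]

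For \eqref{topotuono}, first note that $D_j\subseteq P_j\subseteq L_j$ implies $L_j^{*}\subseteq P_j^{*}\subseteq D_j^{*}$. To check $\mathcal{D}(S_j)\subseteq\mathcal{D}(P_j^{*})$, take $v\in\mathcal{D}(S_j)$ and write $D_j^{*}v=L_j^{*}\xi$ with $\xi\in\mathcal{D}(L_j^{*})$. For $u\in\mathcal{D}(P_j)$ one has $L_j u\in\im(D_j)$ while $D_j^{*}(v-\xi)=L_j^{*}\xi-L_j^{*}\xi=0$ (using $L_j^{*}\subseteq D_j^{*}$), so $v-\xi\in\ker(D_j^{*})=\im(D_j)^{\perp}$; therefore $\langle L_j u,v\rangle=\langle L_j u,\xi\rangle=\langle u,L_j^{*}\xi\rangle=\langle u,D_j^{*}v\rangle$, giving $v\in\mathcal{D}(P_j^{*})$ with $P_j^{*}v=D_j^{*}v=S_jv$. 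Conversely, for $v\in\mathcal{D}(P_j^{*})\subseteq\mathcal{D}(D_j^{*})$ we test against $\phi\in\ker(L_j)\subseteq\mathcal{D}(P_j)$ to obtain $\langle\phi,D_j^{*}v\rangle=\langle L_j\phi,v\rangle=0$; then $D_j^{*}v\perp\ker(L_j)=\im(L_j^{*})^{\perp}$, so $D_j^{*}v\in\im(L_j^{*})$ and $v\in\mathcal{D}(S_j)$.

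For \eqref{topocentrifuga}, set $T_i:=C_i^{-1}\phi_i^{-1}\circ P_{n-i-1}\circ\phi_{i+1}$ and aim to show $T_i=S_i$. Taking the Hilbert space adjoint of the complementarity relation $L_{n-i-1}^{*}\circ\phi_i=C_i(\phi_{i+1}\circ D_i)$ (composing closed unbounded operators with the bounded invertible isometries $\phi_i,\phi_{i+1}$ is routine) produces both the domain identity $\phi_{i+1}(\mathcal{D}(D_i^{*}))=\mathcal{D}(L_{n-i-1})$ and the relation
\[
L_{n-i-1}(\phi_{i+1}(v))=C_i\,\phi_i(D_i^{*}v)\qquad\text{for every }v\in\mathcal{D}(D_i^{*}).
\]
Applying Definition \ref{edera} instead at index $n-i-1$ gives $L_i^{*}\circ\phi_{n-i-1}=C_{n-i-1}(\phi_{n-i}\circ D_{n-i-1})$ on $\mathcal{D}(D_{n-i-1})$ together with $\phi_{n-i-1}(\mathcal{D}(D_{n-i-1}))=\mathcal{D}(L_i^{*})$, whence $\phi_{n-i}(\im(D_{n-i-1}))=\im(L_i^{*})$ and, using $\phi_{n-i}^{-1}=\pm\phi_i$,
\[
\im(D_{n-i-1})=\phi_{n-i}^{-1}(\im(L_i^{*}))=\pm\,\phi_i(\im(L_i^{*})).
\]
Combining these two displays with the description of $\mathcal{D}(P_{n-i-1})$ from the first paragraph, $v\in\mathcal{D}(T_i)$ iff $v\in\mathcal{D}(D_i^{*})$ and $D_i^{*}v\in\im(L_i^{*})$, i.e.\ $v\in\mathcal{D}(S_i)$; and for such $v$, $T_i v=C_i^{-1}\phi_i^{-1}(C_i\phi_i(D_i^{*}v))=D_i^{*}v=S_i v$. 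The second equality in \eqref{topocentrifuga} is then immediate from $\phi_i^{-1}=\pm\phi_{n-i}$.

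The principal technical care lies in the adjoint calculation of the previous paragraph: handling compositions that mix closed unbounded operators with bounded invertible isometries, verifying that the domain identity $\phi_{i+1}(\mathcal{D}(D_i^{*}))=\mathcal{D}(L_{n-i-1})$ falls out of the adjoint, and propagating the signs coming from $\phi_j^{-1}=\pm\phi_{n-j}$ consistently through the derivation of $\im(D_{n-i-1})=\pm\phi_i(\im(L_i^{*}))$ without losing the equality of sets. The two preceding paragraphs, by contrast, amount to a rereading of the construction of Theorem \ref{cheafa} followed by a direct two-inclusion argument.
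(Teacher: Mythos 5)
Your proof is correct, but it is organized around a different pivot than the paper's. The paper never states your intrinsic characterizations $\mathcal{D}(P_j)=\{u\in\mathcal{D}(L_j):L_ju\in\im(D_j)\}$ and $\mathcal{D}(S_j)=\{v\in\mathcal{D}(D_j^*):D_j^*v\in\im(L_j^*)\}$; it works throughout with the explicit decompositions $\ker(L_j)\oplus N_j$, $\ker(D_j^*)\oplus\pi^S_{2,j+1}(\cdots)$ and the orthogonal projections $\pi_1,\pi_2$ from the proof of Theorem \ref{cheafa}. Consequently, for \eqref{topotuono} the paper first proves the auxiliary Proposition \ref{topolampo} (a closed extension with the same kernel and the same range is an equality), computes $\ker$ and $\im$ of both $P_i^*$ and $S_i$, and then verifies by a decomposition computation that $P_i^*$ extends $S_i$; your two-inclusion argument --- $S_j\subseteq P_j^*$ by testing against all of $\mathcal{D}(P_j)$, and $\mathcal{D}(P_j^*)\subseteq\mathcal{D}(S_j)$ by testing against $\ker(L_j)$ --- reaches the same identity directly and makes Proposition \ref{topolampo} unnecessary. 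For \eqref{topocentrifuga} the paper proves $\phi_{i+1}(\mathcal{D}(S_i))=\mathcal{D}(P_{n-i-1})$ piecewise, showing the duality maps intertwine the projections ($\pi^P_{2,n-i-1}\circ\phi_{i+1}=\phi_{i+1}\circ\pi^S_{2,i+1}$, etc.), whereas you bypass the projections entirely: the adjointed complementarity relation $\phi_i^{-1}\circ L_{n-i-1}=C_i\,D_i^*\circ\phi_{i+1}^{-1}$ (which is the paper's \eqref{topoturbina}) together with $\im(D_{n-i-1})=\phi_i(\im(L_i^*))$ lets you match domains and actions of $S_i$ and $C_i^{-1}\phi_i^{-1}\circ P_{n-i-1}\circ\phi_{i+1}$ in one stroke. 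What your route buys is brevity and transparency (the characterization instantly gives $\ker(P_j)=\ker(L_j)$ and $\im(P_j)=\im(D_j)$, which the paper re-derives); what the paper's route buys is that the projection identities it establishes are reused verbatim when it computes the action of $S_i$. Two small points you should spell out: the characterization of $\mathcal{D}(P_j)$ needs both inclusions (for the reverse one, given $u\in\mathcal{D}(L_j)$ with $L_ju=D_j\alpha$, replace $\alpha$ by its $A_j$-component and observe $u-\pi_{2,j}(\alpha)\in\ker(L_j)$), and it should be checked against the degenerate branch of the paper's construction where $\ker(D_j)=\ker(L_j)$ and $P_j:=D_j$; both verifications are routine and your description survives them.
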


In order to prove Theorem \ref{dualcomplex} we need the following proposition:
\begin{prop}
\label{topolampo}
Let $H$ and $K$ be two Hilbert spaces and let $T:H\rightarrow K$ be a closed and densely defined  operator. Let $S:H\rightarrow K$ be another closed and densely defined operator which extends $T$. Assume that $\ker(T)=\ker(S)$ and $\im(T)=\im(S)$. Then: $$T=S$$ that is $\mathcal{D}(T)=\mathcal{D}(S)$ and $T(u)=S(u)$ for each $u\in \mathcal{D}(S)$.
\end{prop}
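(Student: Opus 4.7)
The plan is to reduce the problem to showing the single inclusion $\mathcal{D}(S) \subseteq \mathcal{D}(T)$, since the hypothesis that $S$ extends $T$ already gives both $\mathcal{D}(T) \subseteq \mathcal{D}(S)$ and the pointwise agreement $S|_{\mathcal{D}(T)} = T$. For the nontrivial inclusion, I would pick an arbitrary $u \in \mathcal{D}(S)$ and exploit the two hypotheses (equal images and equal kernels) in sequence.

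First, use $\im(T) = \im(S)$ to produce some $v \in \mathcal{D}(T)$ with $T(v) = S(u)$. Since $S$ extends $T$, we automatically have $S(v) = T(v) = S(u)$, hence $S(u - v) = 0$, i.e.\ $u - v \in \ker(S)$. Second, use $\ker(S) = \ker(T)$ to deduce $u - v \in \ker(T) \subseteq \mathcal{D}(T)$. Since $\mathcal{D}(T)$ is a linear subspace and both $v$ and $u - v$ lie in it, we get $u = v + (u-v) \in \mathcal{D}(T)$, which completes the argument.

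I do not foresee a real obstacle: the proof is essentially a two-line diagram chase, and the hypotheses of closedness and dense definition are not actually used inside the argument itself (they belong to the surrounding Hilbert-complex framework in which this proposition will be applied, and they guarantee that objects like $\mathcal{D}(T)$ and $\mathcal{D}(S)$ are the right things to manipulate). The only point that requires a small amount of care is that $\im(T) = \im(S)$ is meant as an equality of the actual ranges rather than of their closures, which is precisely what allows the choice of $v \in \mathcal{D}(T)$ with $T(v) = S(u)$ in the first step.
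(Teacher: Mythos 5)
Your proof is correct, and it reaches the conclusion by a more elementary route than the paper's. Both arguments reduce to the inclusion $\mathcal{D}(S)\subseteq\mathcal{D}(T)$ and both use the hypotheses in the same order (first $\im(T)=\im(S)$ to produce a preimage $v\in\mathcal{D}(T)$ of $S(u)$, then $\ker(T)=\ker(S)$ to deal with $u-v$), but the paper filters everything through the graph inner product: it splits $\mathcal{D}(S)=\ker(S)\oplus A$ and $\mathcal{D}(T)=\ker(T)\oplus B$, with $A=\mathcal{D}(S)\cap\overline{\im(S^*)}$ and $B=\mathcal{D}(T)\cap\overline{\im(T^*)}$, takes $u\in A$ and $v\in B\subseteq A$, and concludes $u=v$ outright because $u-v\in\ker(S)\cap A=\{0\}$. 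You instead take an arbitrary $u\in\mathcal{D}(S)$ and absorb the discrepancy $u-v$ into $\ker(T)\subseteq\mathcal{D}(T)$, writing $u=v+(u-v)$. What your version buys: it is pure linear algebra, so, as you correctly observe, closedness, dense definedness and the Hilbert structure are never invoked, and the statement holds verbatim for linear maps between plain vector spaces, one extending the other, with equal kernels and equal ranges. What the paper's version buys: it recycles the $\ker\oplus(\text{graph-orthogonal complement})$ decompositions already set up in the proof of Theorem \ref{cheafa}, so the bookkeeping stays uniform with the surrounding material, where those decompositions and the projections $\pi_{1,j},\pi_{2,j}$ do real work; but for this proposition in isolation that machinery is dispensable. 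Your closing remark, that $\im(T)=\im(S)$ must be an equality of actual ranges rather than of closures, is exactly the right point of care: it is what licenses the choice of $v$ in the first step, in both proofs.
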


\begin{proof}
Consider the Hilbert space $(\mathcal{D}(S),\langle\ ,\ \rangle_{\mathcal{G}})$. As in the proof of Theoren \ref{cheafa} we can decompose it as $(\mathcal{D}(S),\langle\ ,\ \rangle_{\mathcal{G}})=\ker(S)\oplus A$ where $A=\mathcal{D}(S)\cap \overline{\im(S^*)}$. Analogously, if we consider $(\mathcal{D}(T),\langle\ ,\ \rangle_{\mathcal{G}})$, then we have $(\mathcal{D}(T),\langle\ ,\ \rangle_{\mathcal{G}})=\ker(T)\oplus B$ where $B=\mathcal{D}(T)\cap \overline{\im(T^*)}$. By the fact that $\mathcal{D}(T)\subseteq \mathcal{D}(S)$ and $\ker(S)=\ker(T)$ we get that $B\subseteq A$. Now let $u\in A$. Then there exists $v\in B$ such that $S(u)=T(v)$. Therefore, by the fact that $S$ extends $T$, we have $S(u-v)=0$ and this implies that $u=v$ because $(u-v)\in \ker(S)\cap A$. So we can conclude that $S=T$.
\end{proof}

\begin{proof} (of Theorem \ref{dualcomplex}).
First of all we remark that we can apply Theorem \ref{cheafa} to the pair of complexes \eqref{topofulmine} and \eqref{topoenergia}. Clearly  \eqref{topofulmine} extends \eqref{topoenergia}; moreover, having assumed that $\im(L_i)$ is closed, it follows that $\im(L_i^*)$ is closed. In this way the assumptions of Theorem \ref{cheafa} are fulfilled. Now, by Theorem \ref{cheafa} and its proof, we know that $\im(P_i)$ is closed for each $i=0,...,n$. Therefore also $\im(P_i^*)$ is closed for each $i=0,...,n$ and we have $\im(P_i^*)=(\ker(P_i))^{\bot}=$ $(\ker(L_i))^{\bot}=\im(L^*_i)$. In the same way $\ker(P_i^*)=$ $(\im(P_i))^{\bot}=(\im(D_i))^{\bot}=$ $\ker(D^*_i)$. Now, if we consider $S_i$, again by Theorem \ref{cheafa} and its proof, we have $\im(S_i)=\im(L_i^*)$, $\ker(S_i)=\ker(D_i^*)$ and in particular $\im(S_i)$ is closed. Therefore, according to Prop. \ref{topolampo}, in order to prove \eqref{topotuono} it is enough  to show that $P_i^*$ extends $S_i$. To do this we have to show that:
\begin{equation}
\label{conditio}
\langle P_i(u),v\rangle_{H_{i+1}} =\langle u,S_i(v)\rangle_{H_i}
\end{equation}
for each $u\in \mathcal{D}(P_i)$ and for each $v\in \mathcal{D}(S_i).$ We start observing that we can decompose $u$ as $u_1+u_2$ where $u_1\in \ker(P_i)$ and $u_2\in \mathcal{D}(P_i)\cap \im(P^*_i)$. Analogously $v=v_1+v_2$ where $v_1\in \ker(S_i)$ and $v_2\in \mathcal{D}(S_i)\cap \im(S^*_i)$. So we get $\langle P_i(u),v\rangle_{H_{i+1}} =\langle P_i(u_2),v_2\rangle_{H_{i+1}}$ because $0=P_i(u_1)$ and $P_i(u_2)\in \im(D_i)$  which is orthogonal to $\ker(D_i^*)=\ker(S_i)$. By the proof of Theorem \ref{cheafa} we know that $P_i(u_2)=D_i(w)$ for a unique element $w\in \mathcal{D}(D_i)\cap \im(D_i^*)$. Therefore $\langle P_i(u_2),v_2\rangle_{H_{i+1}}$ $=\langle D_i(w),v_2\rangle_{H_{i+1}}$ $=\langle w,D_i^*(v_2)\rangle_{H_i}$ $=\langle w,S_i(v_2)\rangle_{H_i}$ because $v_2\in \mathcal{D}(S_i)\subset \mathcal{D}(D_i^*)$ and $D_i^*|_{\mathcal{D}(S_i)}=S_i$. Now, using the projections $\pi_{1,i}$ and $\pi_{2,i}$ defined in the proof of Theorem \ref{cheafa} we can decompose $w$ as $\pi_{1,i}(w)+\pi_{2,i}(w)$ where  $\pi_{1,i}(w)$ is the projection of $w$ on $\ker(L_i)$ and $\pi_{2,i}(w)$ is the projection of $w$ on $\mathcal{D}(L_i)\cap \im(L_i^*)$.\\ We have that $\pi_{2,i}(w)=u_2$ because $\pi_{2,i}(w)-u_2\in \ker(L_i)\cap (\mathcal{D}(L_i)\cap \im(L_i^*))$ $=\{0\}$. Therefore we get $\langle w,S_i(v_2)\rangle_{H_i}$ $=\langle \pi_{1,i}(w)+u_2,S_i(v_2)\rangle_{H_i}$. Now by  the fact that $S_i(v_1)=0$ and $\langle z,S_i(v_2)\rangle_{H_i}=0$ for each $z\in \ker(L_i)$ we get  $\langle \pi_{1,i}(w)+u_2,S_i(v_2)\rangle_{H_i}$ $=\langle u_2,S_i (v_2)+S_i(v_1)\rangle_{H_i}$ $=\langle u_1+u_2,S_i(v_2)+S_i(v_1)\rangle_{H_i}$ $=\langle u,S_i (v)\rangle_{H_i}$. Summarizing all the passages we have: $$\langle P_i(u),v\rangle_{H_{i+1}} =\langle P_i(u_2),v_2\rangle_{H_{i+1}}=\langle D_i(w),v_2\rangle_{H_{i+1}}=\langle w,D_i^*(v_2)\rangle_{H_i}=\langle w,S_i(v_2)\rangle_{H_i}=$$ $$=\langle \pi_{1,i}(w)+\pi_{2,i}(w),S_i(v_2)\rangle_{H_i}=\langle \pi_{1,i}(w)+u_2,S_i(v_2)\rangle_{H_i}=\langle u_2,S_i (v_2)+S_i(v_1)\rangle_{H_i}=$$ $$=\langle u_1+u_2,S_i (v_2)+S_i(v_1)\rangle_{H_i}=\langle u,S_i (v)\rangle_{H_i}$$ and this completes the proof of \eqref{topotuono}.\\Now we prove \eqref{topocentrifuga}.  We start recalling  that 
\begin{equation}
\label{topotroppo}
\mathcal{D}(S_i)=\ker(D_i^*)\oplus \pi_{2,i+1}(\mathcal{D}(L^*_i)\cap \im(L_i))
\end{equation}
 and  
\begin{equation}
\label{topoazzurro}
\mathcal{D}(P_{n-i-1})=\ker(L_{n-i-1})\oplus \pi_{2,n-i-1}(\mathcal{D}(D_{n-i-1})\cap \im(D^*_{n-i-1}))
\end{equation}
where, as defined in the proof of Theorem \ref{cheafa},  in \eqref{topotroppo} $\pi_{2,i+1}$ is the projection 
\begin{equation}
\label{topodino}
\pi_{2,i+1}: \mathcal{D}(L^*_i)\cap \im(L_i)\longrightarrow \mathcal{D}(D^*_i)\cap \im(D_i)
\end{equation}
 and in \eqref{topoazzurro}   $\pi_{2,n-i-1}$ is the projection 
\begin{equation}
\label{toposauro}
\pi_{2,n-i-1}:\mathcal{D}(D_{n-i-1})\cap \im(D^*_{n-i-1}) \longrightarrow \mathcal{D}(L_{n-i-1})\cap \im(L^*_{n-i-1}).
\end{equation}
 Therefore, in order to avoid any confusion, during the rest of the proof we will label with $\pi_{2,i+1}^S$ the projection \eqref{topodino} and with $\pi_{2,n-i-1}^P$ the projection \eqref{toposauro}. First of all, in order to establish \eqref{topocentrifuga}, we need to prove  that  $$\phi_{i+1}(\mathcal{D}(S_i))=\mathcal{D}(P_{n-i-1}).$$ This is equivalent to show that 
\begin{equation}
\label{toporazzo}
\phi_{i+1}(\ker(D^*_i))=\ker(L_{n-i-1})
\end{equation}
and that
\begin{equation}
\label{toposaetta}
\phi_{i+1}(\pi_{2,i+1}^S(\mathcal{D}(L^*_i)\cap \im(L_i)))=\pi_{2,n-i-1}^P(\mathcal{D}(D_{n-i-1})\cap \im(D^*_{n-i-1}))
\end{equation}
By the fact that $C_i(\phi_{i+1}\circ D_i)=L_{n-i-1}^*\circ \phi_i$ we get 
\begin{equation}
\label{topoturbina}
C_i(D_i^*\circ \phi_{i+1}^{-1})=\phi_{i}^{-1}\circ L_{n-i-1}
\end{equation}
 and this implies immediately \eqref{toporazzo}.\\Now to establish \eqref{toposaetta} we need to prove  that $\phi_{i+1}(\mathcal{D}(L^*_i)\cap \im(L_i))=\mathcal{D}(D_{n-i-1})\cap \im(D^*_{n-i-1})$ and that $\pi_{2,n-i-1}^P\circ \phi_{i+1}=\phi_{i+1}\circ \pi_{2,i+1}^S$.\\Consider again $C_i(\phi_{i+1}\circ D_i)=L_{n-i-1}^*\circ \phi_i$. It follows immediately that
 \begin{equation}
\phi_i(\mathcal{D}(D_i))=\mathcal{D}(L_{n-i-1}^*). 
\end{equation}
This implies that $\mathcal{D}(D_i)=\phi_{i}^{-1}(\mathcal{D}(L_{n-i-1}^*))$ which in turn implies that $\mathcal{D}(D_i)=\phi_{n-i}(\mathcal{D}(L_{n-i-1}^*))$ or equivalently $\mathcal{D}(D_{n-i-1})=\phi_{i+1}(\mathcal{D}(L_{i}^*))$.\\  Taking again $C_i(\phi_{i+1}\circ D_i)=L_{n-i-1}^*\circ \phi_i$, we get $C_i(D_i^*\circ \phi_{n-i-1})=\pm \phi_{n-i}\circ L_{n-i-1}$ that is $C_{n-i-1}(D_{n-i-1}^*\circ \phi_{i})=\pm \phi_{i+1}\circ L_{i}$. In this way we get that $\phi_{i+1}(\im(L_i))=\im(D_{n-i-1}^*)$. So we can conclude that:  $$\phi_{i+1}(\mathcal{D}(L^*_i)\cap \im(L_i))=\mathcal{D}(D_{n-i-1})\cap \im(D^*_{n-i-1}).$$
Now, to complete the proof of \eqref{toposaetta}, we have to show that $(\phi_{i+1}\circ \pi^S_{2,i+1})(u)=(\pi^P_{2,n-i-1}\circ \phi_{i+1})(u)$ for each $i=0,...,n$ and for each $u\in \mathcal{D}(L_{i}^*)\cap \im(L_i)$.
Let $u\in \mathcal{D}(L_i^*)\cap \im(L_i)$. Then: 
\begin{equation}
\label{topotempesta}
\phi_{i+1}(u)=\pi^P_{1,n-i-1}(\phi_{i+1}(u))+\pi^P_{2,n-i-1}(\phi_{i+1}(u))
\end{equation}
where, as defined in the proof of Theorem \ref{cheafa}, $\pi_{1,n-i-1}^P$ is the projection $$\pi_{1,n-i-1}^P:\mathcal{D}(D_{n-i-1})\cap \im(D^*_{n-i-1}) \longrightarrow \ker (L_{n-i-1}).$$
On the other hand: \begin{equation}
\label{topotornado}
\phi_{i+1}(u)=\phi_{i+1}(\pi^S_{1,i+1}(u)+\pi^S_{2,i+1}(u))
\end{equation}
where now  $\pi_{1,i+1}^S$ is the projection $$\pi_{1,i+1}^S:\mathcal{D}(L^*_i)\cap \im(L_i)\longrightarrow \ker(D_i^*).$$
Now if we look at  \eqref{topoturbina} we get: $$\phi_{i+1}(\mathcal{D}(D^*_{i}))=\mathcal{D}(L_{n-i-1})\ \text{and}\  \phi_{i+1}(\ker(D^*_{i}))=\ker(L_{n-i-1})$$  while  from Def. \ref{edera} we get:  $$\phi_{i+1}(\im(D_{i}))=\im(L^*_{n-i-1}).$$ Therefore, if we consider again \eqref{topotornado}, we have: $$ \phi_{i+1}(\pi^S_{1,i+1}(u))\in \ker(L_{n-i-1})\ \text{because}\ \pi^S_{1,i+1}(u)\in \ker(D_i^*)$$ and $$\phi_{i+1}(\pi_{2,i+1}^S(u))\in \mathcal{D}(L_{n-i-1})\cap \im(L^*_{n-i-1})\ \text{because}\ \pi_{2,i+1}^S(u)\in \im(D_i)\cap \mathcal{D}(D_i^*).$$ In this way  we can conclude that: $$\pi^P_{1,n-i-1}(\phi_{i+1}(u))=\phi_{i+1}(\pi^S_{1,i+1}(u))\ \text{and}\ \pi_{2,n-i-1}^P(\phi_{i+1}(u))=\phi_{i+1}(\pi_{2,i+1}^S(u))$$ because $$\pi_{1,n-i-1}^P(\phi_{i+1}(u))+\pi_{2,n-i-1}^P(\phi_{i+1}(u))=\phi_{i+1}(\pi_{1,i+1}^S(u)+\pi_{2,i+1}^S(u))=\phi_{i+1}(u)$$ $$\pi_{1,n-i-1}^P(\phi_{i+1}(u))-\phi_{i+1}(\pi_{1,i+1}^S(u))\in \ker(L_{n-i-1}),$$ $$\pi_{2,n-i-1}^P(\phi_{i+1}(u))-\phi_{i+1}(\pi_{2,i+1}^S(u))\in \mathcal{D}(L_{n-i-1})\cap \im(L^*_{n-i-1})$$ and $$\ker(L_{n-i-1})\cap \im(L^*_{n-i-1})=\{0\}.$$ Thus we proved \eqref{toporazzo} and \eqref{toposaetta} and this means that $$\phi_{i+1}(\mathcal{D}(S_i))=\mathcal{D}(P_{n-i-1}).$$\\Now, in order to complete the proof of  \eqref{topocentrifuga}, we have to show that $$S_i(v)=\pm C_i^{-1}(\phi_{n-i}\circ P_{n-i-1}\circ \phi_{i+1})(v)$$ for each $v\in \mathcal{D}(S_i).$  Let $v\in \mathcal{D}(S_i)$. Then, according to \eqref{topotroppo},  $$v=v_1+v_2$$ with $v_1\in \ker(D_i^*)$, $v_2\in \pi_{2,i+1}^S(\mathcal{D}(L_i^*)\cap \im(L_i))$ and we have $\phi_i(S_i(v))=\phi_i(D_i^*(v_2))$ $=\phi_i(D_i^*(\pi_{2,i+1}^S(w)))$ where  $w\in \mathcal{D}(L_i^*)\cap \im(L_i)$  is unique because, as proved in the proof of Theorem \ref{cheafa}, $\pi_{2,i+1}^S$ is injective. So we get:    $$\phi_i(D_i^*(\pi_{2,i+1}^S(w)))=C_i^{-1}L_{n-i-1}(\phi_{i+1}(\pi_{2,i+1}^S(w)))=C_i^{-1}L_{n-i-1}(\pi_{2,n-i-1}^P(\phi_{i+1}(w)))=(\text{because}$$ $$ \pi_{2,n-i-1}^P(\phi_{i+1}(w))\in \mathcal{D}(P_{n-i-1}))=C_i^{-1}P_{n-i-1}(\pi_{2,n-i-1}^P(\phi_{i+1}(w)))=$$ $$C_i^{-1}P_{n-i-1}(\phi_{i+1}(\pi_{2,i+1}^S(w)))=C_i^{-1}P_{n-i-1}(\phi_{i+1}(v_2))=C_i^{-1}P_{n-i-1}(\phi_{i+1}(v_1+v_2))=$$ $$=C_i^{-1}P_{n-i-1}(\phi_{i+1}(v)).$$ Thus we proved that $$\phi_{i}(S_i(v))=C_i^{-1}P_{n-i-1}(\phi_{i+1}(v))$$ for each $v\in \mathcal{D}(S_i)$ and  therefore we can conclude that: $$ S_i=C_i^{-1}\phi_{i}^{-1}\circ P_{n-i-1}\circ \phi_{i+1}=\pm C_i^{-1}\phi_{n-i}\circ P_{n-i-1}\circ \phi_{i+1}.$$ 
\end{proof}

With the next theorem we investigate the Fredholm property for the complex $(H_i, P_i)$.

\begin{teo}
\label{chebrina}
Let  $(H_{j},D_{j})\subseteq (H_{j},L_{j})$,  $j=0,...,n$, be a pair of  Hilbert complexes. Suppose that, for each $j$, $\im(D_{j})$ is closed in $H_{j+1}$.  Let $(H_{j},P_{j})$ be the Hilbert complex built in Theorem \ref{cheafa}. Suppose that  $(H_{j},P_{j})$ is a Fredholm complex. Then:
\begin{enumerate}
\item  Every  Hilbert complex $(H_{j},T_{j})$, such that $(H_{j},D_{j})\subseteq (H_{j},T_{j})\subseteq (H_{j},L_{j})$, is a Fredholm complex.
\item The quotient of the domain of $L_j$ with the domain of $D_j$, that is $$\mathcal{D}(L_j)/\mathcal{D}(D_j)$$ is a finite dimensional vector space for each $j=0,...,n.$
\end{enumerate}
\end{teo}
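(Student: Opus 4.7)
The strategy is to use Theorem \ref{cheafa}, which identifies $H^j(H_*,P_*) = \ker(L_j)/\im(D_{j-1})$; by the Fredholm hypothesis this group is finite dimensional for every $j$, and it will control every intermediate object.

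For part (1), let $(H_j,T_j)$ be any Hilbert complex with $(H_j,D_j)\subseteq(H_j,T_j)\subseteq(H_j,L_j)$. The inclusion of domains forces
$$\ker(D_j)\subseteq \ker(T_j)\subseteq \ker(L_j),\qquad \im(D_{j-1})\subseteq \im(T_{j-1})\subseteq \ker(L_j),$$
and one can chain the injection induced by $\ker(T_j)\subseteq\ker(L_j)$ with the surjection coming from $\im(D_{j-1})\subseteq\im(T_{j-1})$ to obtain
$$H^j(H_*,T_*)=\ker(T_j)/\im(T_{j-1}) \hookrightarrow \ker(L_j)/\im(T_{j-1}) \twoheadleftarrow \ker(L_j)/\im(D_{j-1})=H^j(H_*,P_*).$$
Since the right-hand space is finite dimensional, so is the middle and hence $H^j(H_*,T_*)$, which proves (1).

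For part (2), I would analyse $\mathcal{D}(L_j)/\mathcal{D}(D_j)$ through the short exact sequence
$$0\to \bigl(\mathcal{D}(D_j)+\ker(L_j)\bigr)/\mathcal{D}(D_j)\to \mathcal{D}(L_j)/\mathcal{D}(D_j)\to \mathcal{D}(L_j)/\bigl(\mathcal{D}(D_j)+\ker(L_j)\bigr)\to 0,$$
and show that both outer terms are finite dimensional. The left-hand term equals $\ker(L_j)/(\mathcal{D}(D_j)\cap\ker(L_j))=\ker(L_j)/\ker(D_j)$ by the second isomorphism theorem, and is a quotient of $H^j(H_*,P_*)$ via the chain $\im(D_{j-1})\subseteq\ker(D_j)\subseteq\ker(L_j)$. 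For the right-hand term, the assignment $[u]\mapsto[L_j(u)]$ defines a linear map into $\im(L_j)/\im(D_j)$ which is obviously surjective; its injectivity follows because $L_j(u)\in\im(D_j)$ means $L_j(u)=D_j(v)=L_j(v)$ for some $v\in\mathcal{D}(D_j)$, so $u-v\in\ker(L_j)$ and therefore $u\in\mathcal{D}(D_j)+\ker(L_j)$. Finally $\im(L_j)/\im(D_j)$ embeds into $\ker(L_{j+1})/\im(D_j)=H^{j+1}(H_*,P_*)$ via the inclusion $\im(L_j)\subseteq\ker(L_{j+1})$, and is therefore finite dimensional.

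The main obstacle I expect is the kernel identification $\{u\in\mathcal{D}(L_j):L_j(u)\in\im(D_j)\}=\mathcal{D}(D_j)+\ker(L_j)$ used for the right-hand term in part (2); this is the one step that genuinely uses that $L_j$ actually extends $D_j$ (so that the equation $L_j(u)=L_j(v)$ can be produced from $L_j(u)=D_j(v)$). Everything else is straightforward diagram chasing against the template complex $(H_*,P_*)$ together with invocations of its finite-dimensional cohomology.
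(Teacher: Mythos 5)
Your proof is correct. Part (1) coincides with the paper's own argument (the same chain of an injection $\ker(T_j)/\im(T_{j-1})\hookrightarrow\ker(L_j)/\im(T_{j-1})$ and a surjection from $\ker(L_j)/\im(D_{j-1})=H^j(H_*,P_*)$), but your part (2) takes a genuinely different and more elementary route. The paper puts Banach structures (graph-norm quotients) on $W_j=\mathcal{D}(L_j)/\im(D_{j-1})$ and $V_j=\mathcal{D}(D_j)/\im(D_{j-1})$, checks that the induced maps $\tilde L_j\colon W_j\to\ker(L_{j+1})$, $\tilde D_j\colon V_j\to\ker(L_{j+1})$ and the inclusion $\tilde i_j\colon V_j\to W_j$ are continuous with $\tilde L_j\circ\tilde i_j=\tilde D_j$, proves $\tilde L_j$ and $\tilde D_j$ are Fredholm, deduces that $\tilde i_j$ is Fredholm, and concludes via $\ck(\tilde i_j)\cong\mathcal{D}(L_j)/\mathcal{D}(D_j)$. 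You instead use the purely algebraic filtration $\mathcal{D}(D_j)\subseteq\mathcal{D}(D_j)+\ker(L_j)\subseteq\mathcal{D}(L_j)$ and identify the two graded pieces with $\ker(L_j)/\ker(D_j)$ (a quotient of $H^j(H_*,P_*)$, since $\im(D_{j-1})\subseteq\ker(D_j)$ and $\mathcal{D}(D_j)\cap\ker(L_j)=\ker(D_j)$ because $L_j$ extends $D_j$) and with $\im(L_j)/\im(D_j)$ (a subspace of $H^{j+1}(H_*,P_*)$); your kernel identification $\{u\in\mathcal{D}(L_j):L_j(u)\in\im(D_j)\}=\mathcal{D}(D_j)+\ker(L_j)$ is verified correctly and is indeed the one place the extension property is essential. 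Your route buys two things: it dispenses with all the functional analysis (no Banach quotients, no continuity checks, no Fredholm-composition lemma — in fact for part (2) you only use finite dimensionality of $\ker(L_j)/\im(D_{j-1})$ for every $j$, not even the closedness of $\im(D_j)$, so it is slightly more general), and it recovers the cohomological formula of Corollary \ref{lupolupi} by direct dimension counting, since $\dim(\ker(L_j)/\ker(D_j))=\dim H^j(H_*,P_*)-\dim H^j(H_*,D_*)$ and $\dim(\im(L_j)/\im(D_j))=\dim H^{j+1}(H_*,P_*)-\dim H^{j+1}(H_*,L_*)$, both finite by part (1). What the paper's index-theoretic setup buys in exchange is essentially only the reformulation of that formula as $\ind(\tilde L_j)-\ind(\tilde D_j)$; as a proof of the theorem itself, your argument is shorter.
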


\begin{proof}
Consider a  Hilbert complex $(H_{j},T_{j})$ such that $(H_{j},D_{j})\subseteq (H_{j},T_{j})\subseteq (H_{j},L_{j})$. For each $j$ we have a natural and injective map:
\begin{equation}
\label{fretui}
\ker(T_j)/\im(T_{j-1})\longrightarrow \ker(L_j)/\im(T_{j-1})
\end{equation}
and a natural and surjective map:
\begin{equation}
\label{fretuil}
\ker(L_j)/\im(D_{j-1})\longrightarrow \ker(L_j)/\im(T_{j-1})
\end{equation}
But, by the assumptions, we know that $\ker(L_j)/\im(D_{j-1})$, that is $H^j(H_*,P_*)$, is finite dimensional. Therefore, combining \eqref{fretui} and \eqref{fretuil} together, we get that $H^j(H_*,T_*)$ is finite dimensional for each $j$ and this completes the proof of the first statement.\\
Now, for every $j$, consider the two following vector spaces: $W_j$ defined as $$W_j:=\mathcal{D}(L_j)/\im(D_{j-1})$$ and $V_j$ defined as $$V_j:=\mathcal{D}(D_j)/\im(D_{j-1}).$$ Then $L_j$ induces a well defined operator, that we call $\tilde{L_j}$, acting  from $W_j$ to $\ker(L_{j+1})$. Analogously  $D_j$ induces a well defined operator $\tilde{D_j}:V_j\rightarrow \ker(L_j)$. Finally let us label by $\tilde{i}_j:V_j\longrightarrow W_j$ the map induced by the natural inclusion $i_j:\mathcal{D}(D_j)\rightarrow \mathcal{D}(L_j)$.\\Now we recall that on $W_j$ there is a natural and standard structure of Banach space because it  is defined as the quotient of a Hilbert space,  that is $(\mathcal{D}(L_j), \langle\ ,\ \rangle_{\mathcal{G}})$, with a closed subspace, that is $\im(D_{j})$. Analogously  also on $V_j$  there is a natural and standard structure of Banach space because it  is defined as the quotient of  $(\mathcal{D}(D_j), \langle\ ,\ \rangle_{\mathcal{G}})$,  which is a Hilbert space, with  $\im(D_{j})$, which is a closed subspace of $(\mathcal{D}(D_j), \langle\ ,\ \rangle_{\mathcal{G}})$ as well. We remark that the standard norm on $W_j$ is given by: $$\|[u]\|_{W_j}:=\inf_{s\in \im(D_j)}\|u+s\|_{(\mathcal{D}(L_j),\ \langle\ ,\ \rangle_{\mathcal{G}})}$$ where $[u]\in W_j$ and analogously on $V_j$ we have: $$\|[v]\|_{V_j}:=\inf_{s\in \im(D_j)}\|v+s\|_{(\mathcal{D}(D_j),\ \langle\ ,\ \rangle_{\mathcal{G}})}$$
where $[v]\in V_j$. It is immediate to check that in this way  we have three continuous operators:
\begin{equation}
\label{bismut} 
\tilde{L}_j:W_j\longrightarrow \ker(L_{j+1}),\ \tilde{D}_j:V_j\longrightarrow \ker(L_{j+1}),\ \tilde{i}_j:V_j\longrightarrow W_j
\end{equation}
acting between Banach spaces such that:
\begin{equation}
\label{pizzapo}
\tilde{L}_j\circ \tilde{i}_j=\tilde{D}_j.
\end{equation}
But $\tilde{D}_j$ is a Fredholm operator because $\ker(\tilde{D}_j)\cong H^j(H_*,D_*)$ and $\ck(\tilde{D}_j)\cong H^{j+1}(H_*,P_*)$. Analogously  $\tilde{L}_j$ is a Fredholm operator because $\ker(\tilde{L}_j)\cong H^j(H_*,P_*)$ and $\ck(\tilde{L}_j)\cong H^{j+1}(H_*,L_*)$. Therefore, combining with \eqref{pizzapo}, we get that $\tilde{i}_j$ is Fredholm too. But, by the definition of $\tilde{i}_{j}$, we get immediately that $\tilde{i}_j$ is injective and therefore we have \begin{equation}
\label{azzos}
\ind(\tilde{i}_j)=-\dim(\ck(\tilde{i}_j)).
\end{equation}
Now, in order to complete the proof, we have to observe  that 
\begin{equation}
\label{bivacco}
\ck(\tilde{i}_j)\cong W_j/\tilde{i}_j(V_j)\cong (\mathcal{D}(L_j)/\im(D_{j-1}))/(\tilde{i}_j(\mathcal{D}(D_j)/\im(D_{j-1})))\cong \mathcal{D}(L_j)/\mathcal{D}(D_j).
\end{equation} 
Therefore we can conclude that $$\mathcal{D}(L_j)/\mathcal{D}(D_j)$$ is a finite dimensional vector space and this establishes the theorem.
\end{proof}

\begin{cor}
\label{lupolupi}
Under the  assumptions of Theorem \ref{chebrina} we have the following \textbf{cohomological formula}:
\begin{equation}
\label{amoorso}
\dim(\mathcal{D}(L_j)/\mathcal{D}(D_j))=
\end{equation}
$$\dim(H^j(H_*,P_*))-\dim(H^{j+1}(H_*,L_*))+\dim(H^{j+1}(H_*,P_*))-\dim(H^j(H_*,D_*)).$$
\end{cor}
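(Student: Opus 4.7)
The plan is to invoke the index-additivity formula for Fredholm operators on the composition $\tilde{L}_j\circ \tilde{i}_j=\tilde{D}_j$ already set up in the proof of Theorem \ref{chebrina}, and then read off the four cohomology dimensions from the kernels and cokernels of the three operators in \eqref{bismut}.

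First, I would record the index of each of the three Fredholm operators $\tilde{D}_j$, $\tilde{L}_j$, $\tilde{i}_j$. From the proof of Theorem \ref{chebrina} we already have the identifications $\ker(\tilde{D}_j)\cong H^j(H_*,D_*)$, $\ck(\tilde{D}_j)\cong H^{j+1}(H_*,P_*)$, $\ker(\tilde{L}_j)\cong H^j(H_*,P_*)$, $\ck(\tilde{L}_j)\cong H^{j+1}(H_*,L_*)$, together with $\ker(\tilde{i}_j)=0$ and $\ck(\tilde{i}_j)\cong \mathcal{D}(L_j)/\mathcal{D}(D_j)$ from \eqref{bivacco}. Hence
\begin{align*}
\ind(\tilde{D}_j) &= \dim H^j(H_*,D_*)-\dim H^{j+1}(H_*,P_*),\\
\ind(\tilde{L}_j) &= \dim H^j(H_*,P_*)-\dim H^{j+1}(H_*,L_*),\\
\ind(\tilde{i}_j) &= -\dim\bigl(\mathcal{D}(L_j)/\mathcal{D}(D_j)\bigr).
\end{align*}

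Second, I would apply the standard additivity of the Fredholm index to the composition \eqref{pizzapo}, namely $\ind(\tilde{D}_j)=\ind(\tilde{L}_j)+\ind(\tilde{i}_j)$. Substituting the three expressions above and solving for $\dim(\mathcal{D}(L_j)/\mathcal{D}(D_j))$ yields precisely \eqref{amoorso}. There is essentially no obstacle here: all three operators have already been shown to be Fredholm in the proof of Theorem \ref{chebrina}, their kernels and cokernels have been identified there, and the relation $\tilde{L}_j\circ\tilde{i}_j=\tilde{D}_j$ was recorded as \eqref{pizzapo}, so the corollary follows by a one-line index computation.
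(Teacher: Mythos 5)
Your proposal is correct and follows essentially the same route as the paper: the paper likewise combines the index identities $\ind(\tilde{L}_j)=\dim(H^j(H_*,P_*))-\dim(H^{j+1}(H_*,L_*))$ and $\ind(\tilde{D}_j)=\dim(H^j(H_*,D_*))-\dim(H^{j+1}(H_*,P_*))$ with the relations \eqref{pizzapo}, \eqref{azzos} and \eqref{bivacco} to get $\dim(\mathcal{D}(L_j)/\mathcal{D}(D_j))=-\ind(\tilde{i}_j)=\ind(\tilde{L}_j)-\ind(\tilde{D}_j)$, which is exactly your one-line index computation.
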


\begin{proof}
By \eqref{pizzapo}, \eqref{azzos} and \eqref{bivacco} we have $\dim(\mathcal{D}(L_j)/\mathcal{D}(D_j))=-\ind(\tilde{i}_j)=\ind(\tilde{L}_j)-\ind(\tilde{D}_j)$. But 
\begin{equation}
\label{chiara} 
\ind(\tilde{L}_j)= \dim(\ker(\tilde{L}_j))-\dim(\ck(\tilde{L}_j))=\dim(H^j(H_*,P_*))-\dim(H^{j+1}(H_*,L_*)).
\end{equation}
Analogously
\begin{equation}
\label{marcheggiani} 
\ind(\tilde{D}_j)= \dim(\ker(\tilde{D}_j))-\dim(\ck(\tilde{D}_j))=\dim(H^j(H_*,D_*))-\dim(H^{j+1}(H_*,P_*)).
\end{equation}
Therefore, combining \eqref{chiara} and \eqref{marcheggiani}, we get \eqref{amoorso} and this completes the proof.
\end{proof}

\section{Poincar\'e duality for $L^2$-cohomology}

Now we recall how Hilbert complexes appear naturally in the context of riemannian geometry.
Let $(M,g)$ be an open, oriented and possibly incomplete riemannian manifold.  Consider the de Rham complex $(\Omega_c^{*}(M),d_{*})$ where each form $\omega\in \Omega_c^{i}(M)$ is a $i-$form with compact support. Using the riemannian metric $g$ and the associated volume form $d\vol_g$ we can construct for each $i$ the Hilbert space $L^2\Omega^i(M,g).$ To turn the previous  complex into a Hilbert complex we must specify a closed extension of $d_i$. With the two following definitions we will recall the two canonical closed extensions of $d_i$.

\begin{defi} 
\label{maxi}
The maximal extension $d_{max}$; this is the operator acting on the domain:
\begin{equation} 
\mathcal{D}(d_{max,i})=\{\omega\in L^{2}\Omega^{i}(M,g): \exists\ \eta\in L^{2}\Omega^{i+1}(M,g)
\end{equation}

$$ s.t.\ \langle \omega,\delta_{i}\zeta\rangle_{L^{2}\Omega^i(M,g)}=\langle\eta,\zeta\rangle_{L^{2}\Omega^{i+1}(M,g)}\ \forall\ \zeta\in \Omega_c^{i+1}(M) \}.$$

In this case $d_{max,i}\omega=\eta.$ In  other words $\mathcal{D}(d_{max,i})$ is the largest set of forms $\omega\in L^{2}\Omega^{i}(M,g)$ such that $d_{i}\omega$, computed distributionally, is also in $L^{2}\Omega^{i+1}(M,g).$
\end{defi}

\begin{defi}
\label{mini}
 The minimal extension $d_{min,i}$; this is given by the graph closure of $d_{i}$ on $\Omega_c^{i}(M)$ with respect to the norm of $L^{2}\Omega^{i}(M,g)$, that is,
\begin{equation} \mathcal{D}(d_{min,i})=\{\omega\in L^{2}\Omega^{i}(M,g): \exists\ \{\omega_{j}\}_{j\in J}\subset \Omega_c^{i}(M,g),\ \omega_{j}\rightarrow \omega ,\       d_{i}\omega_{j}\rightarrow \eta\in L^{2}\Omega^{i+1}(M,g)\} 
\end{equation}
and in this case $d_{min,i}\omega=\eta.$
\end{defi}

Obviously $\mathcal{D}(d_{min,i})\subseteq \mathcal{D}(d_{max,i})$. Furthermore, from these definitions, we have immediately that $$d_{min,i}(\mathcal{D}(d_{min,i}))\subseteq \mathcal{D}(d_{min,i+1}),\ d_{min,i+1}\circ d_{min,i}=0$$ and that $$d_{max,i}(\mathcal{D}(d_{max,i}))\subseteq \mathcal{D}(d_{max,i+1}),\ d_{max,i+1}\circ d_{max,i}=0.$$ \\Therefore $(L^{2}\Omega^{*}(M,g),d_{max/min,*})$ are both Hilbert complexes and their cohomology groups are denoted by $H_{2,max/min}^{*}(M,g)$.\\Consider now the formal adjoint of $d_k$, $\delta_{k}:\Omega^{k+1}_c(M)\rightarrow \Omega^{k}_c(M)$. In completely analogy to the previous definition $\delta_{max,k}:L^2\Omega^{k+1}\rightarrow L^2\Omega^k(M,g)$ is defined as the distributional extension of $\delta_{k}$ while $\delta_{min,k}:L^2\Omega^{k+1}\rightarrow L^2\Omega^k(M,g)$ is defined as the graph closure of $\delta_k:\Omega^{k+1}_c(M)\rightarrow \Omega^k_c(M)$. A straightforward but important fact is that the Hilbert complex adjoint of \\$(L^{2}\Omega^{*}(M,g),d_{max/min,*})$ is $(L^{2}\Omega^{*}(M,g),\delta_{min/max,*})$, that is
\begin{equation}
(d_{max,i})^{*}=\delta_{min,i},\     (d_{min,i})^{*}=\delta_{max,i}.
\end{equation}

Using  Proposition \ref{beibei}  we obtain two weak Kodaira decompositions:
\begin{equation}
L^{2}\Omega^{i}(M,g)=\mathcal{H}^{i}_{abs/rel}(M,g)\oplus \overline{\im(d_{max/min,i-1})}\oplus \overline{\im(\delta_{min/max,i})}
\label{kd}
\end{equation}
with summands mutually orthogonal in each case. The first summand on the right, called the absolute or relative  Hodge cohomology, respectively, is defined as the orthogonal complement of the other two summands. Since $(\im(d_{max,i-1}))^{\bot}=\ker(\delta_{min,i-1})$ and $(\im(d_{min,i-1}))^{\bot}=\ker(\delta_{max,i-1})$, we see that 
\begin{equation}
\mathcal{H}^{i}_{abs/rel}=\ker(d_{max/min,i})\cap \ker(\delta_{min/max,i-1}).
\end{equation}

Now consider the following operators:
\begin{equation}
\Delta_{abs,i}=\delta_{min,i}d_{max,i}+d_{max,i-1}\delta_{min,i-1},\     \Delta_{rel,i}=\delta_{max,i}d_{min,i}+d_{min,i-1}\delta_{max,i-1}
\end{equation}
These are selfadjoint and satisfy:
\begin{equation}
\mathcal{H}^{i}_{abs}(M,g)=\ker(\Delta_{abs,i}),\ \mathcal{H}^{i}_{rel}(M,g)=\ker(\Delta_{rel,i})
\end{equation} and
\begin{equation}
\overline{\im(\Delta_{abs,i})}=\overline{\im(d_{max,i-1})}\oplus \overline{\im(\delta_{min,i})},\ \overline{\im(\Delta_{rel,i})}=\overline{ \im(d_{min,i-1})}\oplus \overline{\im(\delta_{max,i})}.
\end{equation}
Furthermore  if $H^{i}_{2,max/min}(M,g)$ is finite dimensional then the range of $d_{max/min,i-1}$ is closed and $\mathcal{H}^{i}_{abs/rel}(M,g)\cong H_{2,max/min}^{i}(M,g)$. On $L^{2}\Omega^{i}(M,g)$ we have also a third weak Kodaira decomposition:
\begin{equation}
L^{2}\Omega^{i}(M,g)=\mathcal{H}^{i}_{max}(M,g)\oplus \overline{\im(d_{min,i-1})}\oplus \overline{\im(\delta_{min,i})}
\label{oiko}
\end{equation}
where $\mathcal{H}^{i}_{max}(M,g)$ satisfies $\mathcal{H}^{i}_{max}(M,g)=\ker(d_{max,i})\cap \ker(\delta_{max,i-1})$. It is called the $i-th$ $maximal$ $Hodge$ $cohomology$ $group$.\\ Finally consider again the complex  $(\Omega_{c}^{*}(M),d_{*})$. We will call a \textbf{ closed extension} of  $(\Omega_{c}^{*}(M),d_{*})$ any Hilbert complex $(L^{2}\Omega^{i}(M,g), D_{i})$ where $D_{i}:L^{2}\Omega^{i}(M,g)\rightarrow L^{2}\Omega^{i+1}(M,g)$ is a  closed operator which extends $d_{i}:\Omega^{i}_{c}(M,g)\rightarrow \Omega^{i+1}_{c}(M,g)$ and such that the action of $D_{i}$ on $\mathcal{D}(D_{i})$, its domain, coincides with the action of $d_{i}$ on $\mathcal{D}(D_i)$ in the distributional sense. Obviously for every closed extension of  $(\Omega_{c}^{*}(M),d_{*})$ we have $(L^{2}\Omega^{*}(M,g),d_{min,*})\subseteq (L^{2}\Omega^{*}(M,g),D_{i})\subseteq (L^{2}\Omega^{*}(M,g),d_{max,*}). $ We will label with   $H^{i}_{2,D_{*}}(M,g)$ and $\overline{H}^{i}_{2,D_{*}}(M,g)$ respectively  the cohomology groups and the reduced cohomology group of $(L^{2}\Omega^{i}(M,g), D_{i})$ and with $\mathcal{H}^{i}_{D_{*}}(M,g)$ its Hodge cohomology groups.

 Now we are in the position to prove the following results:

\begin{prop}
\label{mario}
Let $(M,g)$ be an open, oriented and incomplete riemannian manifold of dimension $m$. Then the complexes $$(L^{2}\Omega^{*}(M,g),d_{max,*})\ and\  (L^{2}\Omega^{*}(M,g),d_{min,*})$$ are a pair of complementary Hilbert complexes. 
\\Moreover, for every $i=0,...,m$, we have the following isomorphism:   $$\ker(d_{max,i})/\overline{\im(d_{min,{i-1}})}\cong \ker(d_{max,m-i})/\overline{\im(d_{min,{m-i-1}})}.$$ 
\end{prop}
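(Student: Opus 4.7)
The second assertion is essentially immediate from the first: once the two complexes are shown to be complementary, applying Theorem \ref{berlino} with $D_j = d_{min,j}$ and $L_j = d_{max,j}$ yields exactly the stated isomorphism (with $n=m$). So the work concentrates on exhibiting the duality maps required by Definition \ref{edera}.

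My candidate for the duality maps is the Hodge star $\phi_i := *_i : L^2\Omega^i(M,g) \to L^2\Omega^{m-i}(M,g)$ induced by $g$ and the orientation. Since $*$ is a pointwise isometry on exterior powers and preserves the volume density, each $\phi_i$ is a unitary isomorphism of Hilbert spaces. The algebraic input is the classical pointwise identity $\delta_i = (-1)^{m(i+1)+1} *_{i+1} d_{m-i-1} *_i$ on smooth forms. Taking $D_i = d_{min,i}$ and $L_i = d_{max,i}$, the Hilbert-space adjoint identity recalled before the proposition gives $L^*_{m-i-1} = (d_{max,m-i-1})^* = \delta_{min,m-i-1}$. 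Thus Definition \ref{edera} reduces to checking
\begin{equation*}
\phi_i(\mathcal{D}(d_{min,i})) = \mathcal{D}(\delta_{min,m-i-1}), \qquad \delta_{min,m-i-1}\circ \phi_i = C_i\,(\phi_{i+1}\circ d_{min,i})
\end{equation*}
on $\mathcal{D}(d_{min,i})$, with $C_i = \pm 1$ determined by the sign in the pointwise identity.

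To verify these, I would first observe that $\Omega^i_c(M)$ is a common core: $d_{min,i}$ is by definition the graph closure of $d_i$ on $\Omega^i_c(M)$, and $\delta_{min,m-i-1}$ is the graph closure of $\delta_{m-i-1}$ on $\Omega^{m-i}_c(M)$. Since $\phi_i$ restricts to a bijection $\Omega^i_c(M) \to \Omega^{m-i}_c(M)$ and, by the pointwise identity above, intertwines $d_i$ with $\pm\delta_{m-i-1}$ on this core, and since $\phi_i$ is an isometry (so it preserves Cauchy sequences in the graph norm), it must send the graph closure of $d_i$ onto the graph closure of $\delta_{m-i-1}$ and intertwine them with the same sign $C_i$. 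This establishes that the pair is complementary in the sense of Definition \ref{edera}.

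The main obstacle is simply the bookkeeping of signs in $\delta = \pm *d*$ and the verification that the isometry $\phi_i$ intertwines the two graph closures once it intertwines the operators on a common core; both are routine but demand care. With the complementary property in hand, Theorem \ref{berlino} closes the argument.
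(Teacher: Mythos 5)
Your proposal is correct and takes essentially the same approach as the paper: the second isomorphism is obtained exactly as in the paper by applying Theorem \ref{berlino} with $D_j=d_{min,j}$ and $L_j=d_{max,j}$, while the first part --- which the paper simply delegates to \cite{FBE}, Theorem 11 --- rests on the very argument you give, namely taking the Hodge star operators as duality maps and passing the pointwise identity $\delta=\pm\, * d\, *$ from the common core $\Omega^{*}_c(M)$ to the graph closures. Your closure step is sound: since $*$ is a unitary intertwining $d_i$ with $\pm\delta_{m-i-1}$ on compactly supported forms, it maps the graph of $d_{min,i}$ isometrically onto the graph of $\delta_{min,m-i-1}=(d_{max,m-i-1})^{*}$, which is precisely what Definition \ref{edera} requires.
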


\begin{proof}
See \cite{FBE} Theorem 11 for the proof of the  first part of the theorem. The second part follows from Theorem \ref{berlino}.
\end{proof}

\begin{teo}
\label{polcas}
Let $(M,g)$ be an open, oriented and incomplete riemannian manifold of dimension $m$. Suppose that, for each $i=0,...,m,$ $\im(d_{min,i})$ is closed in $L^2\Omega^{i+1}(M,g)$. Then  there exists a Hilbert complex $(L^2\Omega^i(M,g)),d_{\mathfrak{M},i})$ such that,  for each $i=0,...m,$ the following properties are satisfied: 
\begin{itemize}
\item $\mathcal{D}(d_{min,i})\subseteq \mathcal{D}(d_{\mathfrak{M},i})\subseteq \mathcal{D}(d_{max,i}),$ that is $d_{max,i}$ is an extension of $d_{\mathfrak{M},i}$ which is an extension of $d_{min,i}$.  
\item $\im(d_{\mathfrak{M},i})$ is closed in $L^2\Omega^{i+1}(M,g)$.    
\item  If we denote $H^i_{2,\mathfrak{M}}(M,g)$  the cohomology of the Hilbert complex $(L^2\Omega^i(M,g),d_{\mathfrak{M},i})$ then we have: $$H^i_{2,\mathfrak{M}}(M,g)=\ker(d_{max,i})/\im(d_{min,i})$$ and $$H^i_{2,\mathfrak{M}}(M,g)\cong H^{m-i}_{2,\mathfrak{M}}(M,g).$$
\end{itemize}
\end{teo}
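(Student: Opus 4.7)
The plan is to recognize that Theorem \ref{polcas} is essentially a direct application of the abstract machinery developed in Section 1, instantiated to the specific pair of Hilbert complexes given by the minimal and maximal extensions of the de Rham complex. The intermediate complex $(L^2\Omega^i(M,g), d_{\mathfrak{M},i})$ will simply be the complex $(H_j, P_j)$ furnished by Theorem \ref{cheafa} when we set $D_j = d_{min,j}$ and $L_j = d_{max,j}$.

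First I would verify that the hypotheses of Theorem \ref{cheafa} are satisfied in this setting. By Definitions \ref{maxi} and \ref{mini}, one has $\mathcal{D}(d_{min,i}) \subseteq \mathcal{D}(d_{max,i})$ with $d_{max,i}$ agreeing with $d_{min,i}$ on the latter's domain, so $(L^2\Omega^*(M,g), d_{min,*}) \subseteq (L^2\Omega^*(M,g), d_{max,*})$ as Hilbert complexes. The remaining assumption of Theorem \ref{cheafa}, namely that $\im(d_{min,i})$ be closed in $L^2\Omega^{i+1}(M,g)$ for every $i$, is exactly the hypothesis we have assumed in the statement of Theorem \ref{polcas}.

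With these hypotheses in hand, Theorem \ref{cheafa} applied to $(D_j, L_j) = (d_{min,j}, d_{max,j})$ produces a third Hilbert complex, which I define to be $(L^2\Omega^*(M,g), d_{\mathfrak{M},*})$. The first two bullet points of the theorem, namely the inclusion $\mathcal{D}(d_{min,i})\subseteq \mathcal{D}(d_{\mathfrak{M},i}) \subseteq \mathcal{D}(d_{max,i})$, the closedness of $\im(d_{\mathfrak{M},i})$, and the cohomology identification $H^i_{2,\mathfrak{M}}(M,g) = \ker(d_{max,i})/\im(d_{min,i-1})$, are immediate translations of the first two conclusions of Theorem \ref{cheafa}.

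For the Poincaré duality isomorphism $H^i_{2,\mathfrak{M}}(M,g)\cong H^{m-i}_{2,\mathfrak{M}}(M,g)$, I invoke Proposition \ref{mario}, which asserts that $(L^2\Omega^*(M,g), d_{max,*})$ and $(L^2\Omega^*(M,g), d_{min,*})$ form a pair of complementary Hilbert complexes in the sense of Definition \ref{edera}, with duality maps provided by the Hodge star operator. This is precisely the extra hypothesis required by the third conclusion of Theorem \ref{cheafa}, which then yields the desired isomorphism $H^j(H_*, P_*)\cong H^{n-j}(H_*, P_*)$ for our complex. There is essentially no obstacle here beyond the verification of hypotheses; all the substantive work, both the construction of the intermediate extension using orthogonal projections in the graph norm and the transfer of complementarity through Theorem \ref{berlino}, has already been carried out in Section 1.
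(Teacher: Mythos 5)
Your proposal is correct and follows essentially the same route as the paper: the paper's own proof consists precisely of applying Theorem \ref{cheafa} with $(D_j,L_j)=(d_{min,j},d_{max,j})$ and invoking Proposition \ref{mario} for the complementarity needed in the duality statement. Your write-up simply makes explicit the hypothesis verification (and, incidentally, states the cohomology identification with the correct index $\im(d_{min,i-1})$, fixing a small index slip in the theorem's statement).
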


\begin{proof}
The proof  is an application of  Theorem \ref{cheafa} combined with  Proposition \ref{mario}.
\end{proof}

From Theorem \ref{dualcomplex} we get the following result:

\begin{teo}
\label{dualcomplexform}
Under  the  hypotheses of Theorem \ref{polcas}.  Consider the Hilbert complexes: 
\begin{equation}
\label{topoinfrarosso}
0\leftarrow L^2(M,g)\stackrel{\delta_{max,0}}{\leftarrow}L^2\Omega^1(M,g)\stackrel{\delta_{max,1}}{\leftarrow}L^2\Omega^{2}(M,g)\stackrel{\delta_{max,2}}{\leftarrow}...\stackrel{\delta_{max,n-1}}{\leftarrow}L^2\Omega^{n}(M,g)\leftarrow 0,
\end{equation}
and 
\begin{equation}
\label{topodellozio}
0\leftarrow L^2(M,g)\stackrel{\delta_{min,0}}{\leftarrow}L^2\Omega^1(M,g)\stackrel{\delta_{min,1}}{\leftarrow}L^2\Omega^{2}(M,g)\stackrel{\delta_{min,2}}{\leftarrow}...\stackrel{\delta_{min,n-1}}{\leftarrow}L^2\Omega^{n}(M,g)\leftarrow 0
\end{equation}
Let 
\begin{equation}
\label{topoultravioletto}
0\leftarrow L^2(M,g)\stackrel{\delta_{\mathfrak{M},0}}{\leftarrow}L^2\Omega^1(M,g)\stackrel{\delta_{\mathfrak{M},1}}{\leftarrow}L^2\Omega^{2}(M,g)\stackrel{\delta_{\mathfrak{M},2}}{\leftarrow}...\stackrel{\delta_{\mathfrak{M},n-1}}{\leftarrow}L^2\Omega^{n}(M,g)\leftarrow 0
\end{equation}
be the intermediate complex, which extends \eqref{topodellozio} and which is extended by \eqref{topoinfrarosso}, built according to Theorem \ref{cheafa}. Then, for each $i=0,...,m$,  we have:
\begin{equation}
\label{topopressato}
d_{\mathfrak{M},i}^*=\delta_{\mathfrak{M},i}=\pm *d_{\mathfrak{M},i}*.
\end{equation}
\end{teo}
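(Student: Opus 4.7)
The plan is to recognize this as a direct specialization of the abstract Theorem \ref{dualcomplex}, once the concrete ingredients are identified. Setting $(H_*,D_*) = (L^2\Omega^*(M,g), d_{min,*})$ and $(H_*,L_*) = (L^2\Omega^*(M,g), d_{max,*})$, Proposition \ref{mario} tells us that these two Hilbert complexes are complementary, with the duality maps $\phi_i$ given by the Hodge star $* : L^2\Omega^i(M,g) \to L^2\Omega^{m-i}(M,g)$ and constants $C_i = \pm 1$. Taking Hilbert space adjoints swaps the pair: $(d_{min,i})^* = \delta_{max,i}$ and $(d_{max,i})^* = \delta_{min,i}$. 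Hence the two dual complexes \eqref{topoinfrarosso} and \eqref{topodellozio} correspond exactly to \eqref{topofulmine} and \eqref{topoenergia} of the abstract setting, and the intermediate complex $\delta_{\mathfrak{M},*}$ of \eqref{topoultravioletto}, built via Theorem \ref{cheafa} from the pair $\delta_{min,*} \subseteq \delta_{max,*}$, is exactly the concrete incarnation of the abstract $S_*$.

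Before invoking Theorem \ref{dualcomplex} I would verify its two range-closedness hypotheses. The closedness of $\im(d_{min,i})$ is assumed in the statement. For $\im(d_{max,i})$, I would use that a closed densely defined operator has closed range iff its adjoint does, so closedness of $\im(d_{max,i})$ is equivalent to closedness of $\im(\delta_{min,i})$. The complementarity relation $* \circ d_{min,i} = \pm\, \delta_{min,m-i-1} \circ *$, which is part of Proposition \ref{mario} (Theorem 11 of \cite{FBE}), transports ranges along the isometry $*$, so $\im(\delta_{min,m-i-1}) = \pm *(\im(d_{min,i}))$ is closed. Replacing $i$ by $m-i-1$ then gives closedness of $\im(d_{max,i})$ for every $i$.

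With both hypotheses in place, Theorem \ref{dualcomplex} applies and the identity $P_i^*=S_i$ of \eqref{topotuono} translates directly into the first equality $d_{\mathfrak{M},i}^* = \delta_{\mathfrak{M},i}$. For the second equality I would invoke the refined formula \eqref{topocentrifuga}, whose additional hypothesis $\phi_i^{-1} = \pm \phi_{m-i}$ reduces for the Hodge star to the standard identity $* \circ * = (-1)^{i(m-i)}\id$ on $L^2\Omega^i(M,g)$, and is therefore satisfied. Substituting $\phi = *$, $P = d_{\mathfrak{M}}$, $S = \delta_{\mathfrak{M}}$, $n=m$ into \eqref{topocentrifuga} yields
$$\delta_{\mathfrak{M},i} = \pm C_i^{-1}\, * \circ d_{\mathfrak{M},m-i-1} \circ *,$$
which is exactly $\pm * d_{\mathfrak{M}} *$ in the compact notation of \eqref{topopressato}.

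The main obstacle I expect is not conceptual but rather the bookkeeping of signs, degrees, and domains: one has to be sure that the Hodge-star intertwining between $d_{min/max}$ and $\delta_{min/max}$, classically valid on $\Omega_c^*(M)$, propagates to the closed $L^2$ extensions with the right sign conventions, so that Definition \ref{edera} is literally satisfied. As noted, this verification is already carried out in the proof of Proposition \ref{mario}, so no fresh analytic work is needed beyond quoting that result; the remainder of the argument is a mechanical substitution into Theorem \ref{dualcomplex}.
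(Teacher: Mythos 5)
Your proposal is correct and follows exactly the paper's route: the paper's entire proof is the single line ``It is an application of Theorem \ref{dualcomplex}'', and your argument simply makes that application explicit, identifying $(d_{min,*},d_{max,*})$ with the abstract pair $(D_*,L_*)$, the Hodge star with the duality maps $\phi_i$ (so that $\phi_i^{-1}=\pm\phi_{m-i}$ holds), and $\delta_{\mathfrak{M},*}$ with $S_*$, while verifying the range-closedness hypotheses via Proposition \ref{mario}. The details you supply (in particular the transport of closed ranges along $*$ to get closedness of $\im(d_{max,i})$) are precisely what the paper leaves implicit, so there is nothing to correct.
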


\begin{proof}
It is an application of Theorem \ref{dualcomplex}. 
\end{proof}

Applying Theorem \ref{chebrina}  we get the following result:
\begin{teo}
\label{nuvoloso}
Let $(M,g)$ be an open, oriented and incomplete riemannian manifold of dimension $m$. Suppose that, for each $i=0,...,m$, $\im(d_{min,i})$ is closed in $L^2\Omega^{i+1}(M,g)$. Let $(L^2\Omega^i(M,g),d_{\mathfrak{M},i})$ be the Hilbert complex built in Theorem \ref{polcas}. Assume that $(L^2\Omega^i(M,g),d_{\mathfrak{M},i})$ is a Fredholm complex. Then:
\begin{enumerate}
\item  Every  closed extension $(L^2\Omega^i(M,g),D_i)$ of $(\Omega_c^i(M),d_i)$ is a Fredholm complex.
\item For every $i=0,...,\dim(M)$  the quotient of the domain of $d_{max,i}$ with the domain of $d_{min,i}$, that is $$\mathcal{D}(d_{max,i})/\mathcal{D}(d_{min,i})$$ is a finite dimensional vector space.
\end{enumerate}
\end{teo}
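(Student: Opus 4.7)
The plan is to reduce the statement directly to the abstract machinery of Theorem \ref{chebrina}. Specifically, I would take $H_j = L^2\Omega^j(M,g)$, and set $D_j := d_{min,j}$, $L_j := d_{max,j}$, and $P_j := d_{\mathfrak{M},j}$. By Proposition \ref{mario} the pair $(L^2\Omega^*(M,g), d_{min,*})$ and $(L^2\Omega^*(M,g), d_{max,*})$ are complementary Hilbert complexes; in particular the first is extended by the second, so $(H_j, D_j) \subseteq (H_j, L_j)$. The standing assumption of the theorem gives that $\im(d_{min,j})$ is closed for each $j$, and the Fredholm hypothesis on $(L^2\Omega^i(M,g), d_{\mathfrak{M},i})$ translates to $(H_j, P_j)$ being a Fredholm complex. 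Hence all the hypotheses of Theorem \ref{chebrina} are in force.

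For part (1), recall from the discussion just before Proposition \ref{mario} that for any closed extension $(L^2\Omega^i(M,g), D_i)$ of $(\Omega_c^i(M), d_i)$ we automatically have
\begin{equation*}
(L^2\Omega^*(M,g), d_{min,*}) \subseteq (L^2\Omega^*(M,g), D_*) \subseteq (L^2\Omega^*(M,g), d_{max,*}).
\end{equation*}
Thus $D_i$ plays the role of $T_i$ in the first statement of Theorem \ref{chebrina}, which yields that $(L^2\Omega^i(M,g), D_i)$ is Fredholm. For part (2), the second statement of Theorem \ref{chebrina} gives directly that $\mathcal{D}(L_j)/\mathcal{D}(D_j) = \mathcal{D}(d_{max,j})/\mathcal{D}(d_{min,j})$ is finite-dimensional for every $j$.

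There is essentially no obstacle here beyond verifying the translation of hypotheses, since all the real work sits in Theorem \ref{chebrina} (and, behind that, in the construction of $d_{\mathfrak{M},*}$ via Theorems \ref{cheafa} and \ref{polcas}). The only mild point worth being explicit about is the fact that an arbitrary closed extension of the compactly supported de Rham complex lies between $d_{min,*}$ and $d_{max,*}$; but this is built into the definition of ``closed extension'' adopted in the paper, since distributional agreement with $d_i$ on $\mathcal{D}(D_i)$ forces $D_i \subseteq d_{max,i}$, while closedness together with extending $d_i$ on $\Omega_c^i(M)$ forces $d_{min,i} \subseteq D_i$ via the graph closure definition. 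With this observation the proof is a one-line invocation of Theorem \ref{chebrina}.
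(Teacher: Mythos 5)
Your proposal is correct and is essentially identical to the paper's own argument: the paper proves Theorem \ref{nuvoloso} by a direct appeal to Theorem \ref{chebrina}, with $d_{min,*}$, $d_{max,*}$, $d_{\mathfrak{M},*}$ playing the roles of $D_*$, $L_*$, $P_*$, and with any closed extension $D_*$ sandwiched between $d_{min,*}$ and $d_{max,*}$ exactly as you note. Your explicit verification of the hypotheses (complementarity via Proposition \ref{mario}, closedness of $\im(d_{min,i})$, and the sandwiching of closed extensions) simply spells out what the paper leaves implicit.
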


\begin{proof}
It is an application of Theorem \ref{chebrina}.
\end{proof}

Now, before stating the next result, we introduce some notations: 
\begin{defi}
\label{mnnmv}
 Let $(M,g)$ be an open, oriented and incomplete riemannian manifold of dimension $m$. Then, in analogy to the closed case, we  label with $b_{2,\mathfrak{M},i}(M,g), b_{2,M,i}(M,g)$ and  $b_{2,m,i}(M,g)$  respectively the dimension of $H^i_{2,\mathfrak{M}}(M,g), H^i_{2,max}(M,g)$ and $H^i_{2,min}(M,g)$ when they are finite dimensional. Moreover we define: 
\begin{equation}
\label{coccodrillo}
\chi_{2,M}(M,g):=\sum_{i=0}^m(-1)^ib_{2,M,i}(M,g),\ \chi_{2,\mathfrak{M}}(M,g):=\sum_{i=0}^m(-1)^ib_{2,\mathfrak{M},i}(M,g)
\end{equation}
and 
\begin{equation}
\label{orsolupo}
 \chi_{2,m}(M,g):=\sum_{i=0}^m(-1)^ib_{2,m,i}(M,g).
\end{equation}
\end{defi}

\begin{cor}
\label{lupolupe}
Under the  assumptions of Theorem \ref{nuvoloso} we have the  \textbf{cohomological formula}
\begin{equation}
\label{amoorsi}
\dim(\mathcal{D}(d_{max,i})/\mathcal{D}(d_{min,i}))=b_{2,\mathfrak{M},i}(M,g)-b_{2,M,i+1}(M,g)+b_{2,\mathfrak{M},i+1}(M,g)-b_{2,m,i}(M,g)
\end{equation}
 for each $i=0,...,m$.
\end{cor}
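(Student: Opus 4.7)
The plan is to obtain this as a direct translation of the abstract cohomological formula in Corollary \ref{lupolupi} into the concrete setting of the de Rham Hilbert complexes. First I would set up the dictionary: take $H_i = L^2\Omega^i(M,g)$, $D_i = d_{min,i}$, $L_i = d_{max,i}$, and $P_i = d_{\mathfrak{M},i}$. By Proposition \ref{mario} and Theorem \ref{polcas}, the hypotheses of Theorem \ref{cheafa} are met (in particular $\im(d_{min,i})$ is closed by assumption, $(L^2\Omega^i(M,g),d_{min,i}) \subseteq (L^2\Omega^i(M,g),d_{max,i})$, and $d_{\mathfrak{M},i}$ is the intermediate complex produced by that theorem), so we are in the abstract situation handled by Theorem \ref{chebrina} and Corollary \ref{lupolupi}.

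Next I would verify that all the Betti numbers appearing on the right-hand side of \eqref{amoorsi} are actually well defined, i.e.\ finite. By hypothesis $(L^2\Omega^i(M,g),d_{\mathfrak{M},i})$ is a Fredholm complex, so $b_{2,\mathfrak{M},i}(M,g)$ is finite. By Theorem \ref{nuvoloso} (itself a specialization of Theorem \ref{chebrina}), every closed extension of $(\Omega_c^i(M),d_i)$ is Fredholm; in particular both $(L^2\Omega^i(M,g),d_{max,i})$ and $(L^2\Omega^i(M,g),d_{min,i})$ are Fredholm complexes, so $b_{2,M,i}(M,g)$ and $b_{2,m,i}(M,g)$ are finite as well. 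Hence the right-hand side of \eqref{amoorsi} is a well-defined integer.

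Finally, I would simply substitute the identifications $H^j(H_*,P_*) = H^j_{2,\mathfrak{M}}(M,g)$, $H^j(H_*,L_*) = H^j_{2,max}(M,g)$, and $H^j(H_*,D_*) = H^j_{2,min}(M,g)$ into the formula
\[
\dim(\mathcal{D}(L_j)/\mathcal{D}(D_j)) = \dim H^j(H_*,P_*) - \dim H^{j+1}(H_*,L_*) + \dim H^{j+1}(H_*,P_*) - \dim H^j(H_*,D_*)
\]
of Corollary \ref{lupolupi}, whose left-hand side becomes $\dim(\mathcal{D}(d_{max,i})/\mathcal{D}(d_{min,i}))$. This yields exactly \eqref{amoorsi}. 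There is essentially no obstacle here: the corollary is a pure translation, and all the analytic work (closedness of ranges, the construction of the intermediate extension, Fredholmness of the flanking complexes) has been pushed into Theorems \ref{cheafa}, \ref{chebrina}, \ref{polcas} and \ref{nuvoloso}, together with Corollary \ref{lupolupi}.
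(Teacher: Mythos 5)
Your proposal is correct and is exactly the paper's argument: the paper proves Corollary \ref{lupolupe} in one line as an immediate application of Corollary \ref{lupolupi}, with the same dictionary $D_i = d_{min,i}$, $L_i = d_{max,i}$, $P_i = d_{\mathfrak{M},i}$ that you spell out. Your additional checks (hypotheses of Theorem \ref{cheafa}/\ref{chebrina} via Proposition \ref{mario} and Theorem \ref{polcas}, finiteness of all the Betti numbers via Theorem \ref{nuvoloso}) are just the details the paper leaves implicit.
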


\begin{proof}
The corollary is an immediate application of Corollary \ref{lupolupi}.
\end{proof}

We conclude this section with the following result.  Let $(M,g)$ be an open, oriented and incomplete riemannian manifold of dimension $m$ such that $\im(d_{min,i})$ is closed for each $i=0,...,m$.  Assume that $(L^2\Omega^i(M,g),d_{\mathfrak{M},i})$ is a Fredholm complex. Then, according to Theorem \ref{nuvoloso}, we can define the following number associated to $(M,g)$:
\begin{equation}
\label{boiachi}
\psi_{L^2}(M,g):=\sum_{i=0}^m(-1)^i\dim(\mathcal{D}(d_{max,i})/\mathcal{D}(d_{min,i})).
\end{equation}

$\psi_{L^2}(M,g)$ satisfies the following properties:
\begin{teo}
\label{zarro}
Under the hypotheses of Theorem \ref{nuvoloso} the following formula holds:
\begin{equation}
\label{merisi}
\psi_{L^2}(M,g)=\chi_{2,M}(M,g)-\chi_{2,m}(M,g)=\left\{
\begin{array}{ll}
0\ &\ \dim(M)\ is\ even\\
2\chi_{2,M}(M,g)\ &\ \dim(M)\ is\ odd
\end{array}
\right.
\end{equation}
\end{teo}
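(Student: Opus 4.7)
The plan is to start from the cohomological formula provided by Corollary \ref{lupolupe}, namely
\begin{equation*}
\dim(\mathcal{D}(d_{max,i})/\mathcal{D}(d_{min,i})) = b_{2,\mathfrak{M},i}(M,g) - b_{2,M,i+1}(M,g) + b_{2,\mathfrak{M},i+1}(M,g) - b_{2,m,i}(M,g),
\end{equation*}
multiply through by $(-1)^i$, and sum over $i=0,\ldots,m$. The $b_{2,\mathfrak{M}}$ contributions should telescope: writing out $\sum_{i=0}^m (-1)^i b_{2,\mathfrak{M},i} + \sum_{i=0}^m (-1)^i b_{2,\mathfrak{M},i+1}$, the coefficient of $b_{2,\mathfrak{M},k}$ for $1\le k\le m$ is $(-1)^k+(-1)^{k-1}=0$, the coefficient of $b_{2,\mathfrak{M},m+1}$ kills a zero term, and what survives is just $b_{2,\mathfrak{M},0}$. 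Likewise a reindexing of $-\sum_{i=0}^m(-1)^i b_{2,M,i+1}$ gives $\chi_{2,M}(M,g)-b_{2,M,0}(M,g)$, and $-\sum_{i=0}^m(-1)^i b_{2,m,i}(M,g)=-\chi_{2,m}(M,g)$.

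The next step is to observe that $b_{2,\mathfrak{M},0}(M,g) = b_{2,M,0}(M,g)$. This is because by the construction in Theorem \ref{cheafa} we have $\ker(d_{\mathfrak{M},0}) = \ker(d_{max,0})$, and both $H^0_{2,\mathfrak{M}}$ and $H^0_{2,max}$ are just this kernel (no incoming differential). Consequently the stray boundary terms cancel and we arrive at
\begin{equation*}
\psi_{L^2}(M,g) = \chi_{2,M}(M,g) - \chi_{2,m}(M,g),
\end{equation*}
which is the first equality in \eqref{merisi}.

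For the second equality I would invoke Poincar\'e duality for the max/min complexes in the Fredholm setting. By Proposition \ref{mario}, the complexes $(L^2\Omega^*(M,g),d_{max,*})$ and $(L^2\Omega^*(M,g),d_{min,*})$ are complementary. The Fredholm hypothesis on $(L^2\Omega^*(M,g),d_{\mathfrak{M},*})$ together with Theorem \ref{nuvoloso} (or directly Theorem \ref{chebrina}) forces both $(L^2\Omega^*(M,g),d_{max,*})$ and $(L^2\Omega^*(M,g),d_{min,*})$ to be Fredholm as well, so their images are closed and reduced and unreduced cohomology coincide. Hence Proposition \ref{errew}(4) upgrades to an isomorphism $H^i_{2,max}(M,g)\cong H^{m-i}_{2,min}(M,g)$, yielding $b_{2,M,i}(M,g)=b_{2,m,m-i}(M,g)$.

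Finally, substituting this identity in the Euler characteristic, $\chi_{2,m}(M,g)=\sum_i(-1)^i b_{2,m,i}(M,g)=\sum_i(-1)^{m-i}b_{2,M,i}(M,g)\cdot(-1)^{2i}\cdot(-1)^m\ldots$; cleaning the signs gives $\chi_{2,m}(M,g)=(-1)^m\chi_{2,M}(M,g)$. Thus when $\dim(M)$ is even we obtain $\chi_{2,M}(M,g)-\chi_{2,m}(M,g)=0$, and when $\dim(M)$ is odd we obtain $2\chi_{2,M}(M,g)$, completing \eqref{merisi}. The only non-routine step is the verification that the max and min cohomologies are genuinely Fredholm (so that reduced and unreduced agree and Poincar\'e duality passes to the unreduced side), but this is exactly what Theorem \ref{chebrina} delivers under our hypotheses; the rest is bookkeeping in an alternating sum.
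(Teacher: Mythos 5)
Your proposal is correct and follows essentially the same route as the paper's proof: sum the cohomological formula of Corollary \ref{lupolupe} against $(-1)^i$, let the $b_{2,\mathfrak{M}}$ terms telescope, and then combine Proposition \ref{mario}, Theorem \ref{nuvoloso} (so that the max and min complexes are Fredholm and reduced equals unreduced cohomology) with Proposition \ref{errew} to get $H^i_{2,max}(M,g)\cong H^{m-i}_{2,min}(M,g)$, hence $\chi_{2,m}(M,g)=(-1)^m\chi_{2,M}(M,g)$. Your explicit justification of the boundary-term cancellation $b_{2,\mathfrak{M},0}(M,g)=b_{2,M,0}(M,g)$ via $\ker(d_{\mathfrak{M},0})=\ker(d_{max,0})$ is in fact slightly cleaner than the paper's corresponding step, which absorbs the $i=0$ terms by inserting $\pm 1$'s.
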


\begin{proof}
By \eqref{amoorsi} and \eqref{boiachi} we have: $$\psi_{L^2}(M,g)=\sum_{i=0}^m(-1)^i(b_{2,\mathfrak{M},i}(M,g)-b_{2,M,i+1}(M,g)+b_{2,\mathfrak{M},i+1}(M,g)-b_{2,m,i}(M,g))=$$ $$=\sum_{i=0}^m(-1)^ib_{2,\mathfrak{M},i}(M,g)-\sum_{i=0}^m(-1)^ib_{2,M,i+1}(M,g)+\sum_{i=0}^m(-1)^ib_{2,\mathfrak{M},i+1}(M,g)-\sum_{i=0}^m(-1)^ib_{2,m,i}(M,g))=$$ $$=\chi_{2,\mathfrak{M}}(M,g)+\sum_{i=0}^m(-1)^{i+1}b_{2,M,i+1}(M,g)-\sum_{i=0}^m(-1)^{i+1}b_{2,\mathfrak{M},i+1}(M,g)-\chi_{2,m}(M,g)=$$ $$=\chi_{2,\mathfrak{M}}(M,g)+\sum_{i=1}^m(-1)^{i}b_{2,M,i}(M,g)-\sum_{i=1}^m(-1)^{i}b_{2,\mathfrak{M},i}(M,g)-\chi_{2,m}(M,g)=$$ $$=\chi_{2,\mathfrak{M}}(M,g)-1+1+\sum_{i=1}^m(-1)^{i}b_{2,M,i}(M,g)+1-1-\sum_{i=1}^m(-1)^{i}b_{2,\mathfrak{M},i}(M,g)-\chi_{2,m}(M,g)=$$ $$=\chi_{2,\mathfrak{M}}(M,g)-1+\chi_{2,M}(M,g)+1-\chi_{2,\mathfrak{M}}(M,g)-\chi_{2,m}(M,g)=$$ $$=\chi_{2,M}(M,g)-\chi_{2,m}(M,g).$$ This proves the first equality of \eqref{merisi}. By  Prop. \ref{mario}, we know that $(L^2\Omega^i(M,g),d_{max,i})$ and $(L^2\Omega^i(M,g),d_{min,i})$ are complementary Hilbert complexes. Moreover, by the assumptions, we know that these are both Fredholm. Therefore, applying Prop. \ref{errew}, we get $H^i_{2,max}(M,g)\cong H^{m-i}_{2,min}(M,g)$ for each $i=0,...,\dim(M)$. This implies that $\chi_{2,M}(M,g)=\chi_{2,m}(M,g)$ when $\dim(M)$ is even and that  $\chi_{2,M}(M,g)=-\chi_{2,m}(M,g)$ when $\dim(M)$ is odd. Thus we have proved the second equality of \eqref{merisi}.  
\end{proof}

\begin{cor}
\label{sinota}
Under the assumptions of Theorem \ref{nuvoloso} we have the  equality
\begin{equation}
\label{tortellino}
\psi_{L^2}(M,g)=\ind(d_{max}+\delta_{min})-\ind(d_{min}+\delta_{max})=\left\{
\begin{array}{ll}
0\ &\ \dim(M)\ is\ even\\
2\ind(d_{max}+\delta_{min})\ &\ \dim(M)\ is\ odd
\end{array}
\right.
\end{equation}
\end{cor}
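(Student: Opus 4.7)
The strategy is to recognize that Corollary \ref{sinota} is really just a reformulation of Theorem \ref{zarro} once one identifies each $L^2$-Euler characteristic with the index of the corresponding rolled-up operator. So the plan splits cleanly into three short steps.

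First, I would invoke Theorem \ref{nuvoloso} together with Proposition \ref{worref} to observe that, since $(L^2\Omega^i(M,g),d_{\mathfrak{M},i})$ is Fredholm, both $(L^2\Omega^i(M,g),d_{max,i})$ and $(L^2\Omega^i(M,g),d_{min,i})$ are Fredholm complexes, and hence the associated roll-up operators
\[
d_{max}+\delta_{min}\colon \mathcal{D}(d_{max}+\delta_{min})\longrightarrow \bigoplus_{i=0}^m L^2\Omega^i(M,g),\qquad d_{min}+\delta_{max}\colon \mathcal{D}(d_{min}+\delta_{max})\longrightarrow \bigoplus_{i=0}^m L^2\Omega^i(M,g)
\]
(defined as in \eqref{aster}, with $D^*=\delta_{min}$ for the max complex and $D^*=\delta_{max}$ for the min complex) are Fredholm operators on their domains with the graph norm.

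Second, I would identify the index of each roll-up with the corresponding $L^2$-Euler characteristic. Since both $d+\delta$ operators are self-adjoint, what one actually means by $\ind(d_{max}+\delta_{min})$ (and similarly for min) is the chirality index: split $\bigoplus_i L^2\Omega^i(M,g)$ as $L^{ev}\oplus L^{odd}$ according to the parity of the degree; then $d_{max}+\delta_{min}$ maps $L^{ev}\to L^{odd}$, and Proposition \ref{fred} combined with \eqref{said} gives
\[
\ind(d_{max}+\delta_{min})=\sum_{i=0}^m(-1)^i\dim\mathcal{H}^i_{abs}(M,g)=\sum_{i=0}^m(-1)^i b_{2,M,i}(M,g)=\chi_{2,M}(M,g),
\]
with the analogous identity $\ind(d_{min}+\delta_{max})=\chi_{2,m}(M,g)$. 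This uses that Fredholmness of the complex implies $H^i\cong \mathcal{H}^i$ via Proposition \ref{topoamotore}.

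Third, I would substitute into the first equality of \eqref{merisi} from Theorem \ref{zarro} to conclude
\[
\psi_{L^2}(M,g)=\chi_{2,M}(M,g)-\chi_{2,m}(M,g)=\ind(d_{max}+\delta_{min})-\ind(d_{min}+\delta_{max}),
\]
and then use the second equality of \eqref{merisi} to split into the even/odd dimensional cases, which in the odd case is exactly $2\ind(d_{max}+\delta_{min})$ by the identification above. There is really no obstacle here; the only point to keep straight is the convention that the displayed $\ind$ refers to the chirality index (self-adjointness would otherwise make both indices zero), so the main care is notational rather than mathematical.
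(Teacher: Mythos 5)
Your proposal is correct and follows essentially the same route as the paper's proof: recognize $d_{max}+\delta_{min}$ and $d_{min}+\delta_{max}$ as the roll-up operators of the max and min complexes per \eqref{aster}, identify their indices with $\chi_{2,M}(M,g)$ and $\chi_{2,m}(M,g)$, and then substitute into both equalities of \eqref{merisi} from Theorem \ref{zarro}. Your explicit remark that ``$\ind$'' must mean the even-to-odd chirality index (since the roll-up operator itself is self-adjoint) is a useful clarification of a point the paper dismisses with ``it is easy to see,'' but it does not change the argument.
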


\begin{proof}
We recall that $d_{max}+\delta_{min}$ is the operator associated to the complex $(L^2\Omega^i(M,g),d_{max,i})$ according to \eqref{aster}. Analogously, again according to \eqref{aster},  $d_{min}+\delta_{max}$ is the operator associated to the complex $(L^2\Omega^i(M,g),d_{min,i})$. Therefore they are Fredholm operators on their domains endowed with the graph norm and it is easy to see that $\ind(d_{max}+\delta_{min})=\chi_{2,M}(M,g)$ and that $\ind(d_{min}+\delta_{max})=\chi_{2,m}(M,g).$ Now, using \eqref{merisi},  we get the first equality of \eqref{tortellino}.\\ As in the proof of Theorem \ref{zarro}, we have  $\ind(d_{max}+\delta_{min})=\ind(d_{min}+\delta_{max})$ when $\dim(M)$ is even and $\ind(d_{max}+\delta_{min})=-\ind(d_{min}+\delta_{max})$ when $\dim(M)$ is odd. Therefore this implies immediately  the second equality of \eqref{tortellino} and so the proof is completed.
\end{proof}

\begin{rem}
A priori there is no reason to expect that $\dim(\mathcal{D}(d_{max,i})/\mathcal{D}(d_{min,i}))$ and $\psi_{L^2}(M,g)$ admit a description only in terms of $L^2$-cohomology groups. Therefore the identities \eqref{amoorsi} and \eqref{merisi} are remarkable. 
\end{rem}

\section{Other results concerning $(L^2\Omega^i(M,g),d_{\mathfrak{M},i})$}
In this section we collect other results concerning the Hilbert complex $(L^2\Omega^i(M,g),d_{\mathfrak{M},i})$. We start  with the following  \textbf{Hodge theorem} for the $L^2$-cohomology groups  $H^i_{2,\mathfrak{M}}(M,g)$.

\begin{teo}
\label{giove}
Under the  assumptions of Theorem \ref{polcas}; Let $\Delta_{i}:\Omega^i_{c}(M)\rightarrow \Omega_{c}^i(M)$ be the Laplacian acting on the space of smooth compactly supported $i-$forms. Then there exists a self-adjoint extension  $\Delta_{\mathfrak{M},i}:L^2\Omega^i(M,g)\rightarrow L^2\Omega^i(M,g)$ with closed range such that
\begin{equation}
\label{topoarrabbiato}
\ker(\Delta_{\mathfrak{M},i})\cong H^i_{2,\mathfrak{M}}(M,g)\cong \ker(d_{max,i})/\im(d_{min,i}).
\end{equation}
\end{teo}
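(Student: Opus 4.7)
The plan is to define $\Delta_{\mathfrak{M},i}$ as the Hodge Laplacian associated to the Hilbert complex $(L^2\Omega^i(M,g),d_{\mathfrak{M},i})$ in the sense of the abstract construction recalled at the beginning of Section~1 (see \eqref{saed} and \eqref{said}), and then use the already-established closed-range properties of $d_{\mathfrak{M},i}$ together with the weak Kodaira decomposition of Proposition~\ref{beibei} to identify its kernel with $H^i_{2,\mathfrak{M}}(M,g)$.

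First I would set
\[
\Delta_{\mathfrak{M},i}:=d_{\mathfrak{M},i}^{*}d_{\mathfrak{M},i}+d_{\mathfrak{M},i-1}d_{\mathfrak{M},i-1}^{*}
\]
on the domain prescribed by \eqref{saed}. By the general theory of Hilbert complexes this is automatically self-adjoint. To check that $\Delta_{\mathfrak{M},i}$ really extends the classical Laplacian $\Delta_i$ on $\Omega_c^i(M)$, I would observe that $\Omega_c^i(M)\subseteq\mathcal{D}(d_{min,i})\subseteq \mathcal{D}(d_{\mathfrak{M},i})$ and, by Theorem~\ref{dualcomplexform}, $d_{\mathfrak{M},i-1}^{*}=\delta_{\mathfrak{M},i-1}=\pm *d_{\mathfrak{M},m-i}*$, which agrees with the formal codifferential on smooth compactly supported forms; moreover $d_i\varphi$ and $\delta_{i-1}\varphi$ are again compactly supported, so $\varphi\in\Omega_c^i(M)$ satisfies the nested conditions in \eqref{saed} and $\Delta_{\mathfrak{M},i}\varphi=\Delta_i\varphi$.

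Next I would identify the kernel. By \eqref{said} one has
\[
\ker(\Delta_{\mathfrak{M},i})=\ker(d_{\mathfrak{M},i})\cap \ker(d_{\mathfrak{M},i-1}^{*}),
\]
and by Proposition~\ref{beibei} applied to the complex $(L^2\Omega^*(M,g),d_{\mathfrak{M},*})$,
\[
L^2\Omega^i(M,g)=\ker(\Delta_{\mathfrak{M},i})\oplus \overline{\im(d_{\mathfrak{M},i-1})}\oplus \overline{\im(d_{\mathfrak{M},i}^{*})}.
\]
Since Theorem~\ref{polcas} guarantees that $\im(d_{\mathfrak{M},i-1})$ is closed for every $i$, the closures may be removed and the weak de Rham isomorphism \eqref{pppp} gives
\[
\ker(\Delta_{\mathfrak{M},i})\cong \overline{H}^i(L^2\Omega^*(M,g),d_{\mathfrak{M},*})= H^i_{2,\mathfrak{M}}(M,g)=\ker(d_{max,i})/\im(d_{min,i}),
\]
where the last equality is the content of Theorem~\ref{polcas}.

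Finally, for the closed range of $\Delta_{\mathfrak{M},i}$, I would note that the closedness of $\im(d_{\mathfrak{M},i-1})$ implies, via the closed range theorem, that $\im(d_{\mathfrak{M},i-1}^{*})$ is closed as well; the two images are mutually orthogonal and contained in $\im(\Delta_{\mathfrak{M},i})$, and from the Kodaira decomposition above their direct sum is precisely the orthogonal complement of $\ker(\Delta_{\mathfrak{M},i})$, which equals $\overline{\im(\Delta_{\mathfrak{M},i})}$. Hence $\im(\Delta_{\mathfrak{M},i})=\im(d_{\mathfrak{M},i-1})\oplus\im(d_{\mathfrak{M},i}^{*})$ is closed. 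The only delicate point in the argument is making sure that all the abstract closed-range statements for $d_{\mathfrak{M}}$ transfer cleanly to $\Delta_{\mathfrak{M}}$, but once the Kodaira decomposition of Proposition~\ref{beibei} is invoked with the closedness hypothesis supplied by Theorem~\ref{polcas}, everything reduces to standard Hilbert-complex bookkeeping.
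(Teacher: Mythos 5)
Your overall route is the same as the paper's: define $\Delta_{\mathfrak{M},i}$ as the abstract Laplacian of the Hilbert complex $(L^2\Omega^i(M,g),d_{\mathfrak{M},i})$ on the domain \eqref{saed}, get self-adjointness for free, identify the kernel via \eqref{said}, Proposition \ref{beibei} and \eqref{pppp} once the closedness of $\im(d_{\mathfrak{M},i})$ (Theorem \ref{polcas}) lets you drop the closures, and then prove closedness of $\im(\Delta_{\mathfrak{M},i})$ by identifying it with $\im(d_{\mathfrak{M},i-1})\oplus\im(d_{\mathfrak{M},i}^{*})$. (Your explicit check that $\Delta_{\mathfrak{M},i}$ really extends $\Delta_i$ on $\Omega^i_c(M)$, via Theorem \ref{dualcomplexform}, is a point the paper leaves implicit, and it is correct.) However, there is a genuine gap in the closed-range step: you assert that $\im(d_{\mathfrak{M},i-1})$ and $\im(d_{\mathfrak{M},i}^{*})$ are ``contained in $\im(\Delta_{\mathfrak{M},i})$'', but that containment is precisely the nontrivial half of the claim. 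Everything else in your final paragraph --- mutual orthogonality, $\ker(\Delta_{\mathfrak{M},i})^{\bot}=\overline{\im(\Delta_{\mathfrak{M},i})}$, and the Kodaira decomposition with closures removed --- only yields the easy inclusion $\im(\Delta_{\mathfrak{M},i})\subseteq \im(d_{\mathfrak{M},i-1})\oplus\im(d_{\mathfrak{M},i}^{*})$. Given $\omega=d_{\mathfrak{M},i-1}\alpha$ with $\alpha\in\mathcal{D}(d_{\mathfrak{M},i-1})$ arbitrary, nothing says that $\alpha$ is of the form $d_{\mathfrak{M},i-1}^{*}u$ for some $u\in\mathcal{D}(\Delta_{\mathfrak{M},i})$, so an actual $\Delta_{\mathfrak{M},i}$-preimage of $\omega$ must be manufactured; this is where the real work lies, and ``standard bookkeeping'' does not discharge it by itself.

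The paper fills exactly this hole by applying the (now strong) Kodaira decomposition twice. Since $\im(d_{\mathfrak{M},i-1}^{*})$ is closed, one may replace $\alpha$ by its component orthogonal to $\ker(d_{\mathfrak{M},i-1})$ without changing $\omega$, and so assume $\alpha=d_{\mathfrak{M},i-1}^{*}\gamma$ with $\gamma\in\mathcal{D}(d_{\mathfrak{M},i-1}^{*})$; since $\im(d_{\mathfrak{M},i-1})$ is closed, one may similarly assume $\gamma=d_{\mathfrak{M},i-1}\eta_1$ with $\eta_1\in\mathcal{D}(d_{\mathfrak{M},i-1})$. Then $u:=d_{\mathfrak{M},i-1}\eta_1$ satisfies $u\in\ker(d_{\mathfrak{M},i})$ (so $d_{\mathfrak{M},i}u=0\in\mathcal{D}(d_{\mathfrak{M},i}^{*})$), $u\in\mathcal{D}(d_{\mathfrak{M},i-1}^{*})$ with $d_{\mathfrak{M},i-1}^{*}u=\alpha\in\mathcal{D}(d_{\mathfrak{M},i-1})$; hence $u\in\mathcal{D}(\Delta_{\mathfrak{M},i})$ and $\Delta_{\mathfrak{M},i}u=d_{\mathfrak{M},i-1}\alpha=\omega$. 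The symmetric argument, one degree up, handles $\im(d_{\mathfrak{M},i}^{*})$. With this inserted your argument closes and coincides with the paper's proof; without it, the closed-range assertion is unproved.
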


\begin{proof}
Consider the Hilbert complex $(L^2\Omega^i(M,g),d_{\mathfrak{M},i})$. For each $i=0,...,m$ define 
\begin{equation}
\label{rosmarino}
\Delta_{\mathfrak{M},i}:=d_{\mathfrak{M},i}^*\circ d_{\mathfrak{M},i}+d_{\mathfrak{M},i-1}\circ d_{\mathfrak{M},i-1}^*
\end{equation}
with domain given by 
\begin{equation}
\label{sodio}
\mathcal{D}(\Delta_{\mathfrak{M},i})=\{\omega\in \mathcal{D}(d_{\mathfrak{M},i})\cap \mathcal{D}(d_{\mathfrak{M},i-1}^*):\ d_{\mathfrak{M},i}(\omega)\in \mathcal{D}(d_{\mathfrak{M},i}^*)\ \text{and}\ d_{\mathfrak{M},i-1}^*(\omega)\in \mathcal{D}(d_{\mathfrak{M},i-1})\}.
\end{equation}
In other words, for each $i=0,...,m$, $\Delta_{\mathfrak{M},i}$ is the $i-th$ Laplacian associated to the Hilbert complex $(L^2\Omega^i(M,g),d_{\mathfrak{M},i})$. So, as recalled in the first section,  we have that  \eqref{rosmarino}  is a  self-adjoint operator. Moreover, by Theorem \ref{polcas}, we know that $d_{\mathfrak{M},i}$ has closed range for each $i$.  This implies  that also $d_{\mathfrak{M},i}^*$ has closed range for each $i$. This means   that for the Hilbert complex $(L^2\Omega^i(M,g),d_{\mathfrak{M},i})$ the $L^2$-cohomology and the reduced $L^2$-cohomology are  the same and so we can apply \eqref{pppp} to get the first isomorphism of \eqref{topoarrabbiato}. The second one follows from  Theorem \ref{polcas}. Moreover, by the assumptions, we get that  $\im(\Delta_{\mathfrak{M},i})=\im(d_{\mathfrak{M},i-1})\oplus \im(d_{\mathfrak{M},i}^*)$.  Indeed  we have  $\im(\Delta_{\mathfrak{M},i})\subset \im(d_{\mathfrak{M},i-1})\oplus \im(d_{\mathfrak{M},i}^*)$ for all $i=0,...,m$.  Now let $\omega\in \im(d_{\mathfrak{M},i-1})\oplus \im(d_{\mathfrak{M},i}^*)$. Applying repeatedly the decomposition  in  Prop. \ref{beibei} and keeping in mind that $d_{\mathfrak{M},i}$ and  $d_{\mathfrak{M},i}^*$ have closed range in all degree, we get that $$\omega=d_{\mathfrak{M},i-1}(d_{\mathfrak{M},i-1}^*(d_{\mathfrak{M},i-1}(\eta_1)))+d_{\mathfrak{M},i}^*(d_{\mathfrak{M},i}(d_{\mathfrak{M},i}^*(\eta_2)))$$ for some $\eta_1\in \mathcal{D}(d_{\mathfrak{M},i-1})$ and $\eta_2\in \mathcal{D}(d_{\mathfrak{M},i}^*).$ Also, by the construction of $\eta_1$ and $\eta_2$, we get that  $$d_{\mathfrak{M},i-1}(\eta_1)+d_{\mathfrak{M},i}^*(\eta_2)\in \mathcal{D}(\Delta_{\mathfrak{M},i})$$ and $$d_{\mathfrak{M},i-1}(d_{\mathfrak{M},i-1}^*(d_{\mathfrak{M},i-1}(\eta_1)))+d_{\mathfrak{M},i}^*(d_{\mathfrak{M},i}(d_{\mathfrak{M},i}^*(\eta_2)))=\Delta_{\mathfrak{M},i}(d_{\mathfrak{M},i-1}(\eta_1)+d_{\mathfrak{M},i}^*(\eta_2)).$$ Therefore we get $\im(\Delta_{\mathfrak{M},i})\supset \im(d_{\mathfrak{M},i-1})\oplus \im(d_{\mathfrak{M},i}^*)$ and in  this way we can conclude  that  $\Delta_{\mathfrak{M},i}$ is an operator with closed range. This completes the proof.
\end{proof}

According to \eqref{said} we have $\ker(\Delta_{\mathfrak{M},i})=\ker(d_{\mathfrak{M},i})\cap \ker(d^*_{\mathfrak{M},i})$. We will label these spaces as $\mathcal{H}^{i}_{\mathfrak{M}}(M,g)$. Moreover, by the construction of $d_{\mathfrak{M,i}}$, we have that $\ker(d_{\mathfrak{M},i})=\ker(d_{max,i})$ and that $\im(d_{\mathfrak{M},i})=\im(d_{min,i})$. In particular this implies that the orthogonal decomposition of $L^2\Omega^i(M,g)$ induced by $(L^2\Omega^i(M,g),d_{\mathfrak{M},i})$, that is $$L^2\Omega^i(M,g)=\mathcal{H}^i_{\mathfrak{M}}(M,g)\oplus\im(d_{\mathfrak{M},i})\oplus\im(d_{\mathfrak{M},i}^*)$$ coincides with the one described in \eqref{oiko}, that is $$L^2\Omega^i(M,g)=\mathcal{H}^i_{max}(M,g)\oplus\im(d_{min,i})\oplus\im(\delta_{min,i}).$$ In particular we have \begin{equation}
\label{porcini}
\mathcal{H}^i_{max}(M,g)=\ker(\Delta_{\mathfrak{M},i})=\ker(d_{\mathfrak{M},i})\cap \ker(d^*_{\mathfrak{M},i}).
\end{equation}
The next proposition show that $H^i_{2,\mathfrak{M}}(M,g)$ is the biggest $L^2$-cohomology group for $(M,g).$

\begin{prop}
\label{sufcond}
Let $(M,g)$ be an open, oriented and incomplete riemannian manifold which satisfies the assumptions of Theorem \ref{polcas}. Then we have the following properties:
\begin{enumerate}
\item Consider the natural inclusion of complexes $ (L^2\Omega^i(M,g),d_{min,i})\subset  (L^2\Omega^i(M,g),d_{\mathfrak{M},i})$. Then the  map induced between cohomology groups is injective for all $i=0,...,m$.
\item Let $(L^{2}\Omega^{i}(M,g), D_{i})$ be a closed extension of $(\Omega_{c}^{i}(M),d_{i})$. Then, for each $i=0,...,m,$ there exists a natural injective map $\overline{H}^i_{2,D_*}(M,g)\rightarrow H^i_{2,\mathfrak{M}}(M,g)$.
\end{enumerate}
Finally, if $(L^2\Omega^i(M,g),d_{\mathfrak{M},i})$ is a Fredholm complex, then for every closed extension\\ $(L^2\Omega^i(M,g),D_i)$, there is an injective map $H^i_{2,D_*}(M,g)\rightarrow H^i_{2,\mathfrak{M}}(M,g)$ for every $i=0,...,m$.
\end{prop}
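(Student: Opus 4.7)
The plan is to handle the two labeled parts and the concluding assertion in turn, relying throughout on the Hodge-type identifications supplied by Proposition \ref{beibei} and Theorem \ref{giove}.

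For statement (1), I would first unwind the definitions. By the construction in Theorem \ref{cheafa} (applied with $D=d_{min}$, $L=d_{max}$, $P=d_{\mathfrak{M}}$), we have $\ker(d_{\mathfrak{M},i})=\ker(d_{max,i})$ and $\im(d_{\mathfrak{M},i-1})=\im(d_{min,i-1})$, so
\[
H^{i}_{2,\mathfrak{M}}(M,g)=\ker(d_{max,i})/\im(d_{min,i-1}).
\]
The map induced by the inclusion of complexes is then the obvious quotient map
\[
\ker(d_{min,i})/\im(d_{min,i-1})\longrightarrow\ker(d_{max,i})/\im(d_{min,i-1})
\]
coming from $\ker(d_{min,i})\subseteq\ker(d_{max,i})$. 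Since both quotients are taken with respect to the same subspace, its kernel equals $(\ker(d_{min,i})\cap\im(d_{min,i-1}))/\im(d_{min,i-1})=0$, giving injectivity.

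For statement (2), one must be careful: the naive formula $[\omega]\mapsto[\omega]$ is not well-defined because $\overline{\im(D_{i-1})}$ need not be contained in $\im(d_{min,i-1})$ when $D\ne d_{min}$. The plan is instead to go through Hodge representatives. By Proposition \ref{beibei} applied to the Hilbert complex $(L^2\Omega^*(M,g),D_*)$ one has the canonical isomorphism $\overline{H}^{i}_{2,D_*}(M,g)\cong \mathcal{H}^{i}_{D_*}(M,g):=\ker(D_i)\cap\ker(D_{i-1}^*)$, while Theorem \ref{giove} combined with \eqref{porcini} yields $H^{i}_{2,\mathfrak{M}}(M,g)\cong \mathcal{H}^{i}_{\mathfrak{M}}(M,g)=\ker(d_{max,i})\cap\ker(\delta_{max,i-1})$. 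Since $d_{min,i}\subseteq D_i\subseteq d_{max,i}$, taking adjoints reverses the inclusions to give $\delta_{min,i-1}\subseteq D_{i-1}^*\subseteq\delta_{max,i-1}$; hence $\ker(D_i)\subseteq\ker(d_{max,i})$ and $\ker(D_{i-1}^*)\subseteq\ker(\delta_{max,i-1})$, so $\mathcal{H}^{i}_{D_*}(M,g)\subseteq \mathcal{H}^{i}_{\mathfrak{M}}(M,g)$ as (closed) subspaces of $L^2\Omega^{i}(M,g)$. The composition
\[
\overline{H}^{i}_{2,D_*}(M,g)\cong\mathcal{H}^{i}_{D_*}(M,g)\hookrightarrow\mathcal{H}^{i}_{\mathfrak{M}}(M,g)\cong H^{i}_{2,\mathfrak{M}}(M,g)
\]
is then the natural map asked for, and it is injective because each arrow is.

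For the concluding assertion, I would invoke Theorem \ref{nuvoloso}: if $(L^2\Omega^i(M,g),d_{\mathfrak{M},i})$ is Fredholm, then every closed extension $(L^2\Omega^i(M,g),D_i)$ is also Fredholm, so $H^{i}_{2,D_*}(M,g)$ is finite-dimensional. Proposition \ref{topoamotore} then forces $\im(D_{i-1})$ to be closed, whence $H^{i}_{2,D_*}(M,g)=\overline{H}^{i}_{2,D_*}(M,g)$, and part (2) provides the required injection. The main (mild) obstacle I anticipate is precisely the one flagged at the start of the second paragraph: pinning down \emph{which} map is meant by ``natural'' in (2), given that the evident quotient-level construction fails, so that going through $\mathcal{H}^i_{D_*}(M,g)$ is not optional but forced.
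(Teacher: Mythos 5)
Your proposal is correct and follows essentially the same route as the paper's proof: part (1) via the identification $H^i_{2,\mathfrak{M}}(M,g)=\ker(d_{max,i})/\im(d_{min,i-1})$, part (2) by passing to harmonic representatives and using the inclusion $\ker(D_i)\cap\ker(D_{i-1}^*)\subseteq\ker(d_{max,i})\cap\ker(\delta_{max,i-1})$ together with \eqref{porcini}, and the final claim via Theorem \ref{nuvoloso} plus Proposition \ref{topoamotore}. The only differences are cosmetic: you cite Proposition \ref{beibei} and \eqref{pppp} where the paper cites Proposition \ref{fred}, and you spell out the adjoint-reversal argument for the kernel inclusions that the paper leaves implicit in \eqref{assiria}.
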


\begin{proof}
The first property follows immediately by the fact that $$\overline{H}^i_{2,\mathfrak{M}}(M,g)=\ker(d_{max,i})/\im(d_{min,i-1}).$$For the second property, by Prop. \ref{fred},  we have $\overline{H}^i_{2,D_*}(M,g)\cong \mathcal{H}^i_{D_*}(M,g)$. So applying \eqref{said} we get $\overline{H}^i_{2,D_*}(M,g)\cong \ker(D_i)\cap \ker(D_{i-1}^*)$. Applying the same statements to the complex $(L^2\Omega^i(M,g),d_{\mathfrak{M},i})$ we get $H^i_{2,\mathfrak{M}}(M,g)$ $\cong \ker(d_{\mathfrak{M},i})\cap \ker(d_{\mathfrak{M},i-1}^*)\cong$ $\ker(d_{max,i})\cap \ker(\delta_{max,i-1})$ by \eqref{porcini}. Summarizing we have:
 \begin{equation}
\label{assiria}
\overline{H}^i_{2,D_*}(M,g)\cong \ker(D_i)\cap \ker(D_{i-1}^*)\subset  \ker(d_{max,i})\cap \ker(\delta_{max,i-1})\cong H^i_{2,\mathfrak{M}}(M,g)
\end{equation}
and this proves the second statement.
Finally if  $(L^2\Omega^i(M,g),d_{\mathfrak{M},i})$ is a Fredholm complex then, according to Theorem \ref{nuvoloso}, we know that every closed extension $(L^2\Omega^i(M,g),D_i)$ is a Fredholm complex. Thus \eqref{assiria} becomes: $$H^i_{2,D_*}(M,g)\cong \ker(D_i)\cap \ker(D_{i-1}^*)\subset  \ker(d_{max,i})\cap \ker(\delta_{max,i-1})\cong H^i_{2,\mathfrak{M}}(M,g)$$ and this completes the proof.
\end{proof}

Finally we conclude this section with the following proposition:
\begin{prop}
\label{Dmeedm}
Let $(M,g)$ be an  oriented and incomplete riemannian  manifold. The  following properties are equivalent:
\begin{enumerate}
\item $\mathcal{D}(d_{min,i})=\mathcal{D}(d_{max,i})$ for all $i=0,...,m$.
\item  $\im(d_{min,i})=\im(d_{max,i})$ for all $i=0,...,n$.
\end{enumerate}
Moreover if $(L^2\Omega^i(M,g),d_{max/min,i})$ is a Fredholm complex then we have the following list of equivalent properties:
\begin{enumerate}
\item $\mathcal{D}(d_{min,i})=\mathcal{D}(d_{max,i})$ for all $i=0,...,m$.
\item $\im(d_{min,i})=\im(d_{max,i})$ for all $i=0,...,m$.
\item $\ker(d_{min,i})=\ker(d_{max,i})$ for all $i=0,...,m$.
\item $H^i_{2,max}(M,g)\cong H^i_{2,\mathfrak{M}}(M,g)$ for all $i=0,...,m$.
\item $H^i_{2,min}(M,g)\cong H^i_{2,\mathfrak{M}}(M,g)$ for all $i=0,...,m$.
\end{enumerate}
\end{prop}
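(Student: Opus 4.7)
The plan is to first establish the general equivalence $(1)\Leftrightarrow(2)$ and then, in the Fredholm case, to derive the remaining equivalences from the natural short exact sequences that relate the three relevant $L^{2}$-cohomologies.

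For the general part, $(1)\Rightarrow(2)$ is immediate: if $\mathcal{D}(d_{min,i})=\mathcal{D}(d_{max,i})$ for every $i$, then both extensions act as the distributional exterior derivative on the common domain, so they coincide as operators and hence have the same image. For $(2)\Rightarrow(1)$ the key intermediate step is to upgrade $(2)$ to the kernel equality $\ker(d_{max,i})=\ker(d_{min,i})$ for every $i$. Taking orthogonal complements in $L^{2}\Omega^{i+1}$ and using $(d_{min,i})^{*}=\delta_{max,i}$ and $(d_{max,i})^{*}=\delta_{min,i}$ converts $(2)$ into $\ker(\delta_{max,i})=\ker(\delta_{min,i})$ for every $i$. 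By Proposition~\ref{mario} the complexes $(L^{2}\Omega^{*},d_{max,*})$ and $(L^{2}\Omega^{*},d_{min,*})$ are complementary via the Hodge star $*$, and the identity $\delta=\pm *d*$ gives $*\ker(\delta_{max,i})=\ker(d_{max,m-i-1})$, and similarly for the minimal extensions, so the kernel equality for $\delta$ is transported to the required $\ker(d_{max,j})=\ker(d_{min,j})$ for every $j$. To close the loop, given $\omega\in\mathcal{D}(d_{max,i})$, use $(2)$ to pick $\eta\in\mathcal{D}(d_{min,i})$ with $d_{min,i}\eta=d_{max,i}\omega$; then $\omega-\eta\in\ker(d_{max,i})=\ker(d_{min,i})\subseteq\mathcal{D}(d_{min,i})$, so $\omega\in\mathcal{D}(d_{min,i})$.

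Under the Fredholm hypothesis, the inclusions $\mathcal{D}(d_{min,i})\subseteq\mathcal{D}(d_{\mathfrak{M},i})\subseteq\mathcal{D}(d_{max,i})$ together with $\ker(d_{\mathfrak{M},i})=\ker(d_{max,i})$ and $\im(d_{\mathfrak{M},i})=\im(d_{min,i})$ yield the natural short exact sequences
\[
0\to H^{i}_{2,min}(M,g)\to H^{i}_{2,\mathfrak{M}}(M,g)\to \ker(d_{max,i})/\ker(d_{min,i})\to 0,
\]
\[
0\to \im(d_{max,i-1})/\im(d_{min,i-1})\to H^{i}_{2,\mathfrak{M}}(M,g)\to H^{i}_{2,max}(M,g)\to 0.
\]
All terms being finite-dimensional, the first sequence encodes $(3)\Leftrightarrow(5)$ and the second encodes $(2)\Leftrightarrow(4)$. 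Finally $(3)\Rightarrow(2)$ in the Fredholm setting follows from another orthogonal-complement argument: $(3)$ yields $\overline{\im(\delta_{min,i})}=\overline{\im(\delta_{max,i})}$, the Fredholm hypothesis promotes this to $\im(\delta_{min,i})=\im(\delta_{max,i})$, and Hodge star returns $(2)$. Combined with the general $(1)\Leftrightarrow(2)$ this produces the full chain $(1)\Leftrightarrow(2)\Leftrightarrow(3)\Leftrightarrow(4)\Leftrightarrow(5)$.

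The hard part is the Hodge-star conjugation used in the general case, since it is what lets one bypass an element-by-element analysis of $\ker(d_{max,i})$. This passage is legitimate precisely because Proposition~\ref{mario} guarantees that $*$ intertwines the $d_{max/min}$ and $\delta_{max/min}$ operators together with their domains; once that framework is in place all remaining implications reduce to bookkeeping.
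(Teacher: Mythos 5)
Your argument is correct, and it splits naturally into a part that coincides with the paper and a part that is genuinely different. The general equivalence $(1)\Leftrightarrow(2)$ is proved exactly as in the paper: orthogonal complements turn the image equality into $\ker(\delta_{max,i})=\ker(\delta_{min,i})$, the Hodge star (via the complementarity of Proposition \ref{mario}) transports this to $\ker(d_{max,j})=\ker(d_{min,j})$ in all degrees, and the element chase $\omega-\eta\in\ker(d_{max,i})=\ker(d_{min,i})$ closes the loop; your $(3)\Rightarrow(2)$ also uses the same closed-range-plus-star ingredients as the paper. Where you diverge is in the treatment of $(4)$ and $(5)$. The paper argues Hodge-theoretically: it represents $H^i_{2,max}$ by $\ker(\delta_{min,i-1})\cap\ker(d_{max,i})$ and $H^i_{2,\mathfrak{M}}$ by $\ker(\delta_{max,i-1})\cap\ker(d_{max,i})$, uses finite dimensionality to upgrade the inclusion of these harmonic spaces to an equality, and then extracts $\im(d_{min,i-1})=\im(d_{max,i-1})$ from the two weak Kodaira decompositions \eqref{kd} and \eqref{oiko}. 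You instead read everything off the algebraic short exact sequences
\begin{gather*}
0\longrightarrow H^{i}_{2,min}(M,g)\longrightarrow H^{i}_{2,\mathfrak{M}}(M,g)\longrightarrow \ker(d_{max,i})/\ker(d_{min,i})\longrightarrow 0,\\
0\longrightarrow \im(d_{max,i-1})/\im(d_{min,i-1})\longrightarrow H^{i}_{2,\mathfrak{M}}(M,g)\longrightarrow H^{i}_{2,max}(M,g)\longrightarrow 0,
\end{gather*}
which follow from $H^i_{2,\mathfrak{M}}(M,g)=\ker(d_{max,i})/\im(d_{min,i-1})$ and the third isomorphism theorem. This is more elementary (no harmonic representatives, no Kodaira decomposition) and exhibits the precise obstruction to each isomorphism as a concrete quotient; the paper's route instead reuses the Hodge-theoretic machinery it has already set up.

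Two points in your write-up must be made precise. First, the blanket claim that \emph{all terms are finite-dimensional} does not follow from the Fredholm hypothesis alone: Fredholmness of the max/min complexes is not known (and is nowhere claimed in the paper) to force $H^i_{2,\mathfrak{M}}(M,g)$ to be finite dimensional --- this is exactly why Theorem \ref{nuvoloso} takes Fredholmness of the $\mathfrak{M}$-complex as a hypothesis. Your dimension counts are nonetheless legitimate, because they are needed only in the directions $(5)\Rightarrow(3)$ and $(4)\Rightarrow(2)$, where the assumed abstract isomorphism with $H^i_{2,min}(M,g)$, respectively $H^i_{2,max}(M,g)$, makes $H^i_{2,\mathfrak{M}}(M,g)$ finite dimensional, after which exactness forces the outer quotient term to vanish; the opposite directions require no dimension count at all. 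This should be said explicitly. Second, the arrows you list --- $(1)\Leftrightarrow(2)$, $(2)\Leftrightarrow(4)$, $(3)\Leftrightarrow(5)$, $(3)\Rightarrow(2)$ --- do not by themselves close the chain: an arrow from the group $\{(1),(2),(4)\}$ into $\{(3),(5)\}$ is still missing. It is available at no cost, since your proof of $(2)\Rightarrow(1)$ passes through the kernel equality and hence already proves $(2)\Rightarrow(3)$ (alternatively, $(1)\Rightarrow(3)$ is immediate because equal domains force $d_{min,i}=d_{max,i}$ as operators), but it must be invoked explicitly for the proof to be complete as written.
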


\begin{proof}
We start proving the equivalence of the first pair of statements. Clearly $1)$ implies $2)$. Assume now that $2)$ holds. Then we know that also $\overline{\im(d_{min,i})}=\overline{\im(d_{max,i})}$ for all $i=0,...,m$. Therefore we get $\ker(\delta_{min,i})=\ker(\delta_{max,i})$ and finally, using the Hodge star operator we get $\ker(d_{min,i})=\ker(d_{max,i})$ for all $i=0,...,m$. Now let $\eta\in \mathcal{D}(d_{max,i})$. Then there exists $\omega\in \mathcal{D}(d_{min,i})$ such that $d_{max,i}\eta=d_{min,i\omega}$. This means that $\eta-\omega\in \ker(d_{max,i})$ and therefore there exists $\psi\in \ker(d_{min,i})$ such that $\eta-\omega=\psi $. Summarizing we got $\eta=\omega+\psi\in \mathcal{D}(d_{min,i})$ and this concludes the proof of the first part.\\Now we prove the second part of the proposition. First of all we observe that using the Hodge star operator, it follows easily that $(L^2\Omega^i(M,g),d_{max,i})$ is Fredholm if and only if $(L^2\Omega^i(M,g),d_{min,i})$ is Fredholm. Now from the first part we know that  the first two assertions are equivalent and they imply  the remaining statements. Assume now that $3)$ holds. Then applying the Hodge star operator we know that also $\ker(\delta_{min,i})=\ker(\delta_{max,i})$  and therefore that $\im(d_{min,i})=\im(d_{max,i})$ for all $i=0,...,m$ because, by the fact that $(L^2\Omega^i(M,g),d_{max/min,i})$ is a Fredholm complexes, we have that $\im(d_{max/min,i})$ is closed. So we can apply the first part of the proposition to get the conclusion.\\Now assume that $4)$ holds.  Then $H^i_{2,\mathfrak{M}}(M,g)$ is finite dimensional. We already know that $H^i_{2,max}(M,g)\cong \ker(\delta_{min,i-1})\cap \ker(d_{max,i})\subset \ker(\delta_{max,i-1})\cap \ker(d_{max,i})\cong H^i_{2,\mathfrak{M}}(M,g)$. Combining with $4)$ we get $$ \ker(\delta_{min,i-1})\cap \ker(d_{max,i})= \ker(\delta_{max,i-1})\cap \ker(d_{max,i})$$ and therefore using the weak Kodaira decompositions \eqref{kd} and \eqref{oiko} we have:  $$\im(d_{max,i-1})\oplus \im(\delta_{min,i})= \im(d_{min,i-1})\oplus \im(\delta_{min,i}).$$ In this way we get: $\im(d_{min,i-1})=  \im(d_{max,i-1})$ 
for each $i$. So we are in position to apply the first part of the proposition and therefore we proved that $4)\Rightarrow 1)$. In the same way, with the obvious modifications, we can prove  that $5)\Rightarrow 1)$.
\end{proof}

\section{$L^2-$Euler characteristic  and $L^2-$signature}

Let $(M,g)$ be an  open, oriented and incomplete riemannian manifold such that $(L^2\Omega^i(M,g), d_{\mathfrak{M},i})$ is a Fredholm complex. Then, in Definition \ref{mnnmv}, we defined  the $L^2-$Euler characteristic of $(M,g)$ associated to $(L^2\Omega^i(M,g), d_{\mathfrak{M},i})$ as: 
\begin{equation}
\chi_{2,\mathfrak{M}}(M,g):=\sum_{i=0}^m(-1)^{i}b_{2,\mathfrak{M},i}(M,g)
\end{equation}
where $b_{2,i,\mathfrak{M}}(M,g):=\dim(H^i_{2,\mathfrak{M}}(M,g))$.
We have the following immediate corollary:
\begin{cor}
Let $(M,g)$ be an open,  oriented and incomplete manifold such that $(L^2\Omega^i(M,g), d_{\mathfrak{M},i})$ is a Fredholm complex. If $m$ is odd then: $$\chi_{2,\mathfrak{M}}(M,g)=0.$$ 
\end{cor}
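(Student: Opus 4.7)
The plan is to extract the vanishing directly from the Poincar\'e duality for $H^*_{2,\mathfrak{M}}(M,g)$ already established in Theorem \ref{polcas} (equivalently, Theorem \ref{marioq}). Since $(L^2\Omega^i(M,g),d_{\mathfrak{M},i})$ is assumed to be a Fredholm complex, each cohomology group $H^i_{2,\mathfrak{M}}(M,g)$ is finite dimensional, so the Betti numbers $b_{2,\mathfrak{M},i}(M,g):=\dim H^i_{2,\mathfrak{M}}(M,g)$ are well defined and the alternating sum defining $\chi_{2,\mathfrak{M}}(M,g)$ makes sense.

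First I would invoke Theorem \ref{polcas} to obtain the isomorphism $H^i_{2,\mathfrak{M}}(M,g)\cong H^{m-i}_{2,\mathfrak{M}}(M,g)$, which yields the symmetry $b_{2,\mathfrak{M},i}(M,g)=b_{2,\mathfrak{M},m-i}(M,g)$ for every $i=0,\dots,m$. (Note that the Fredholm hypothesis implicitly presupposes the construction of $(L^2\Omega^i(M,g),d_{\mathfrak{M},i})$, hence the closedness of $\im(d_{min,i})$, so Theorem \ref{polcas} genuinely applies here.)

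Next I would reindex the sum. Substituting $j=m-i$ and using the symmetry gives
\begin{equation*}
\chi_{2,\mathfrak{M}}(M,g)=\sum_{i=0}^{m}(-1)^{i}b_{2,\mathfrak{M},i}(M,g)=\sum_{j=0}^{m}(-1)^{m-j}b_{2,\mathfrak{M},j}(M,g)=(-1)^{m}\chi_{2,\mathfrak{M}}(M,g).
\end{equation*}
When $m$ is odd this reads $\chi_{2,\mathfrak{M}}(M,g)=-\chi_{2,\mathfrak{M}}(M,g)$, so $2\chi_{2,\mathfrak{M}}(M,g)=0$ and therefore $\chi_{2,\mathfrak{M}}(M,g)=0$.

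There is no real obstacle here: the whole content of the corollary is the Poincar\'e duality for $H^*_{2,\mathfrak{M}}(M,g)$, which has already been proved. The only thing to check is that the Fredholm hypothesis on $(L^2\Omega^i(M,g),d_{\mathfrak{M},i})$ is compatible with invoking Theorem \ref{polcas}, which is immediate from the setup of this section.
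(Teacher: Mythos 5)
Your proof is correct and follows exactly the paper's route: the paper also deduces the vanishing directly from the Poincar\'e duality isomorphism $H^i_{2,\mathfrak{M}}(M,g)\cong H^{m-i}_{2,\mathfrak{M}}(M,g)$ of Theorem \ref{polcas}, only stated as an immediate consequence. Your write-up simply makes explicit the reindexing argument $\chi_{2,\mathfrak{M}}(M,g)=(-1)^m\chi_{2,\mathfrak{M}}(M,g)$ that the paper leaves to the reader.
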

\begin{proof}
It is an immediate consequence of the fact that $H^i_{2,\mathfrak{M}}(M,g)\cong H^{m-i}_{\mathfrak{M}}(M,g).$
\end{proof}
Now consider the operator $$d_{\mathfrak{M}}+d_{\mathfrak{M}}^*:L^2\Omega^*(M,g)\rightarrow L^2\Omega^*(M,g)$$ defined according to \eqref{aster}. Let us label $L^2\Omega^{ev}(M,g):=\bigoplus_{i=0}^m L^2\Omega^{2i}(M,g)$ and analogously $L^2\Omega^{odd}(M,g):=\bigoplus_{i=0}^m L^2\Omega^{2i+1}(M,g)$. Define 
\begin{equation}
\label{stomp}
(d_{\mathfrak{M}}+d_{\mathfrak{M}}^*)^{ev/odd}:L^2\Omega^{ev/odd}(M,g)\rightarrow L^2\Omega^{odd/ev}(M,g)
\end{equation}
 as the restriction of $d_{\mathfrak{M}}+d_{\mathfrak{M}}^*$ to $L^2\Omega^{ev/odd}(M,g)$ with domain given by $$\mathcal{D}((d_{\mathfrak{M}}+d_{\mathfrak{M}}^*)^{ev}):= \mathcal{D}(d_{\mathfrak{M}}+d_{\mathfrak{M}}^*)\cap L^2\Omega^{ev}(M,g)=\bigoplus_{i=0}^m(\mathcal{D}(d_{\mathfrak{M},2i-1}^*)\cap \mathcal{D}(d_{\mathfrak{M},2i}))$$ and analogously 
$$\mathcal{D}((d_{\mathfrak{M}}+d_{\mathfrak{M}}^*)^{odd}):= \mathcal{D}(d_{\mathfrak{M}}+d_{\mathfrak{M}}^*)\cap L^2\Omega^{odd}(M,g)=\bigoplus_{i=0}^m(\mathcal{D}(d_{\mathfrak{M},2i}^*)\cap \mathcal{D}(d_{\mathfrak{M},2i+1}))$$
Clearly $(d_{\mathfrak{M}}+d_{\mathfrak{M}}^*)^{odd}$ is the adjoint of $(d_{\mathfrak{M}}+d_{\mathfrak{M}}^*)^{ev}.$

We are ready for the next theorem.

\begin{teo}
\label{eulerissimo}
Let $(M,g)$ be an open, oriented and incomplete riemannian manifold of dimension $m$ such that $(L^2\Omega^i(M,g), d_{\mathfrak{M},i})$ is a Fredholm complex. Then $(d_{\mathfrak{M}}+d_{\mathfrak{M}}^*)^{ev}$ is a  Fredholm operator on its  domain endowed with the graph norm and we have:
\begin{equation}
\label{oradicena}
\ind((d_{\mathfrak{M}}+d_{\mathfrak{M}}^*)^{ev})=\chi_{2,\mathfrak{M}}(M,g)
\end{equation}
\end{teo}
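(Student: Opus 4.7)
The plan is to reduce Theorem \ref{eulerissimo} to the Hodge theorem established in Theorem \ref{giove} together with Proposition \ref{worref} applied to the complex $(L^2\Omega^i(M,g),d_{\mathfrak{M},i})$. First I would invoke Proposition \ref{worref}: since $(L^2\Omega^i(M,g),d_{\mathfrak{M},i})$ is Fredholm by hypothesis, the associated self-adjoint operator $d_{\mathfrak{M}}+d_{\mathfrak{M}}^*$ defined as in \eqref{aster} is a Fredholm operator on its domain endowed with the graph norm. The operator respects the $\mathbb{Z}/2$-grading $L^2\Omega^*(M,g)=L^2\Omega^{ev}(M,g)\oplus L^2\Omega^{odd}(M,g)$, and its domain splits accordingly; therefore its restriction $(d_{\mathfrak{M}}+d_{\mathfrak{M}}^*)^{ev}$ to even forms inherits Fredholmness (in the graph norm of the full operator, which coincides with the graph norm on the even summand).

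Next I would identify kernel and cokernel. Observing that $((d_{\mathfrak{M}}+d_{\mathfrak{M}}^*)^{ev})^*=(d_{\mathfrak{M}}+d_{\mathfrak{M}}^*)^{odd}$ by the definitions, the cokernel of the even operator is isomorphic to the kernel of the odd one. By \eqref{said} applied to the Hilbert complex $(L^2\Omega^i(M,g),d_{\mathfrak{M},i})$, an element $\omega\in \mathcal{D}(d_{\mathfrak{M},i})\cap\mathcal{D}(d_{\mathfrak{M},i-1}^*)$ lies in the kernel of $d_{\mathfrak{M},i}+d_{\mathfrak{M},i-1}^*$ iff it lies in $\ker(d_{\mathfrak{M},i})\cap\ker(d_{\mathfrak{M},i-1}^*)=\mathcal{H}^i_{\mathfrak{M}}(M,g)$. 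Decomposing degree by degree yields
\begin{equation}
\ker((d_{\mathfrak{M}}+d_{\mathfrak{M}}^*)^{ev})=\bigoplus_{i}\mathcal{H}^{2i}_{\mathfrak{M}}(M,g),\qquad \ck((d_{\mathfrak{M}}+d_{\mathfrak{M}}^*)^{ev})\cong\bigoplus_{i}\mathcal{H}^{2i+1}_{\mathfrak{M}}(M,g).
\end{equation}

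Finally I would invoke Theorem \ref{giove}: since $\im(d_{\mathfrak{M},i})$ is closed for every $i$, the de Rham isomorphism \eqref{pppp} gives $\mathcal{H}^i_{\mathfrak{M}}(M,g)\cong H^i_{2,\mathfrak{M}}(M,g)$, so $\dim\mathcal{H}^i_{\mathfrak{M}}(M,g)=b_{2,\mathfrak{M},i}(M,g)$. Therefore
\begin{equation}
\ind((d_{\mathfrak{M}}+d_{\mathfrak{M}}^*)^{ev})=\sum_{i}b_{2,\mathfrak{M},2i}(M,g)-\sum_{i}b_{2,\mathfrak{M},2i+1}(M,g)=\sum_{i=0}^m(-1)^i b_{2,\mathfrak{M},i}(M,g)=\chi_{2,\mathfrak{M}}(M,g),
\end{equation}
which is \eqref{oradicena}.

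The only nontrivial point is verifying that Fredholmness of the full self-adjoint operator $d_{\mathfrak{M}}+d_{\mathfrak{M}}^*$ descends to Fredholmness of its chiral half $(d_{\mathfrak{M}}+d_{\mathfrak{M}}^*)^{ev}$; this is automatic because the grading commutes with the operator and its domain, so the kernel, closed range, and cokernel all split into even and odd parts. Everything else is formal bookkeeping supported by Theorem \ref{giove} and Proposition \ref{worref}.
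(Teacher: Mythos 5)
Your proposal is correct and follows essentially the same route as the paper: invoke Proposition \ref{worref} to get Fredholmness of the full operator $d_{\mathfrak{M}}+d_{\mathfrak{M}}^*$, observe that the even/odd grading is respected so the chiral restriction is Fredholm with $((d_{\mathfrak{M}}+d_{\mathfrak{M}}^*)^{ev})^*=(d_{\mathfrak{M}}+d_{\mathfrak{M}}^*)^{odd}$, identify kernel and cokernel degree by degree with the harmonic spaces $\mathcal{H}^{2i}_{\mathfrak{M}}(M,g)$ and $\mathcal{H}^{2i+1}_{\mathfrak{M}}(M,g)$, and conclude via the isomorphism $\mathcal{H}^{i}_{\mathfrak{M}}(M,g)\cong H^i_{2,\mathfrak{M}}(M,g)$ coming from the closed-range property and \eqref{pppp}. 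The paper phrases the kernel identification through $\ker((d_{\mathfrak{M}}+d_{\mathfrak{M}}^*)^{odd}\circ(d_{\mathfrak{M}}+d_{\mathfrak{M}}^*)^{ev})=\bigoplus_i\ker(\Delta_{\mathfrak{M},2i})$ rather than your direct degree-by-degree argument, but this is only a cosmetic difference.
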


\begin{proof}
By the assumptions we know that $(L^2\Omega^i(M,g), d_{\mathfrak{M},i})$ is a Fredholm complex. Therefore, using Prop. \ref{worref}, we can conclude that  $d_{\mathfrak{M}}+d_{\mathfrak{M}}^*:L^2\Omega^*(M,g)\rightarrow L^2\Omega^*(M,g)$ is  Fredholm operator on its domain endowed with the graph norm. Clearly we have  $$\ker((d_{\mathfrak{M}}+d_{\mathfrak{M}}^*)^{ev})= \ker(d_{\mathfrak{M}}+d_{\mathfrak{M}}^*)\cap L^2\Omega^{ev}(M,g)$$ and $$\im((d_{\mathfrak{M}}+d_{\mathfrak{M}}^*)^{ev})=\im(d_{\mathfrak{M}}+d_{\mathfrak{M}}^*)\cap L^2\Omega^{odd}(M,g).$$ We get  immediately that $\ker((d_{\mathfrak{M}}+d_{\mathfrak{M}}^*)^{ev})$ is finite dimensional and that $\im((d_{\mathfrak{M}}+d_{\mathfrak{M}}^*)^{ev})$ is closed with finite dimensional orthogonal complement. So we got that also $(d_{\mathfrak{M}}+d_{\mathfrak{M}}^*)^{ev}$ is  a Fredholm  operator on its domain endowed with the graph norm. This implies that $(d_{\mathfrak{M}}+d_{\mathfrak{M}}^*)^{odd}$ is   Fredholm too because it is the adjoint of $(d_{\mathfrak{M}}+d_{\mathfrak{M}}^*)^{ev}$.\\ 
Now using  \eqref{said}, \eqref{kudam}, \eqref{pppp} we get: 
\begin{equation}
\label{selfcontrol}
\ker((d_{\mathfrak{M}}+d_{\mathfrak{M}}^*)^{ev})=\ker((d_{\mathfrak{M}}+d_{\mathfrak{M}}^*)^{odd}\circ(d_{\mathfrak{M}}+d_{\mathfrak{M}}^*)^{ev})=\sum_{i=0}^m \ker(\Delta_{\mathfrak{M},2i})=\sum_{i=0}^mH^{2i}_{2,\mathfrak{M}}(M,g).
\end{equation} Analogously: 
\begin{equation}
\label{outcontrol}
(\im((d_{\mathfrak{M}}+d_{\mathfrak{M}}^*)^{ev}))^{\bot}=\ker((d_{\mathfrak{M}}+d_{\mathfrak{M}}^*)^{odd})=\sum_{i=0}^m \ker(\Delta_{\mathfrak{M},2i+1})=\sum_{i=0}^mH^{2i+1}_{2,\mathfrak{M}}(M,g).
\end{equation} Now  \eqref{oradicena}  follows immediately by \eqref{selfcontrol} and \eqref{outcontrol} and this  establishes the Theorem.
\end{proof}

In the rest of this section we will describe how to define a $L^2-$signature for $(M,g)$ using $H^i_{2,\mathfrak{M}}(M,g)$. To this aim, first of all,  let us label $\overline{H}^i_{2,\mathfrak{M}}(M,g)$ the vector spaces defined as $$\overline{H}^i_{2,\mathfrak{M}}(M,g):=\ker(d_{max,i})/\overline{\im(d_{min,i})}.$$
The first step is to show  that using the wedge product we can construct a well defined and non degenerate pairing between $\overline{H}^{i}_{2,\mathfrak{M}}(M,g)$ and $\overline{H}^{m-i}_{2,\mathfrak{M}}(M,g)$ where $m=\dim(M)$.\\ 
We define:
\begin{equation}
\overline{H}^{i}_{2,\mathfrak{M}}(M,g)\times \overline{H}^{m-i}_{2,\mathfrak{M}}(M,g)\longrightarrow \mathbb{R},\  ([\eta],[\omega])\mapsto \int_{M} \eta\wedge \omega
\label{cambo}
\end{equation}
where $\omega$ and $\eta$ are any representative of $[\eta]$ and $[\omega]$ respectively.

\begin{prop}
Let (M,g) be an open, oriented and incomplete riemannian manifold of dimension $m$.
Then \eqref{cambo} is a well defined and  non degenerate pairing.
\end{prop}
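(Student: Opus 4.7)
The plan is to handle well-definedness first and then non-degeneracy using the weak Kodaira decomposition \eqref{oiko} together with the Hodge star intertwining.

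\textbf{Well-definedness.} The pairing is bounded: using $\eta\wedge\omega = \pm\langle\eta,*\omega\rangle_{g}\,d\vol_g$, where $*\omega\in L^2\Omega^i(M,g)$, Cauchy--Schwarz gives
$$\Bigl|\int_M \eta\wedge\omega\Bigr|\leq \|\eta\|_{L^2}\|\omega\|_{L^2},$$
so the integral is a well-defined real number. For independence of representatives I would show that $\int_M \beta\wedge\omega=0$ whenever $\beta\in\overline{\im(d_{min,i-1})}$ and $\omega\in\ker(d_{max,m-i})$. By continuity of the pairing it suffices to treat $\beta=d_{min,i-1}\alpha$ with $\alpha\in\mathcal{D}(d_{min,i-1})$. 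Rewriting through the Hodge star and using the adjoint relation $(d_{min,i-1})^*=\delta_{max,i-1}$, one gets
$$\int_M d_{min,i-1}\alpha\wedge\omega = \pm\langle d_{min,i-1}\alpha,*\omega\rangle_{L^2} = \pm\langle\alpha,\delta_{max,i-1}(*\omega)\rangle_{L^2}.$$
The standard Hodge-star intertwining $\delta_{max,i-1}=\pm * d_{max,m-i}*^{-1}$ (the analog for the max/min complexes of the identity \eqref{topopressato}) yields $\delta_{max,i-1}(*\omega)=\pm *d_{max,m-i}\omega=0$. A symmetric argument handles a change of representative in the second slot.

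\textbf{Non-degeneracy.} Suppose $[\eta]\in\overline{H}^i_{2,\mathfrak{M}}(M,g)$ satisfies $\int_M\eta\wedge\omega=0$ for every $\omega\in\ker(d_{max,m-i})$. Apply the weak Kodaira decomposition \eqref{oiko} to split $\ker(d_{max,i})=\mathcal{H}^i_{max}(M,g)\oplus\overline{\im(d_{min,i-1})}$ and write $\eta=h+r$ with $h\in\mathcal{H}^i_{max}(M,g)$ and $r\in\overline{\im(d_{min,i-1})}$. It is enough to prove $h=0$. Choose the test form $\omega:=*h$. Using $\mathcal{H}^i_{max}(M,g)=\ker(d_{max,i})\cap\ker(\delta_{max,i-1})$ together with $d=\pm *\delta*$ and $\delta=\pm *d*$, one checks that $*h\in\mathcal{H}^{m-i}_{max}(M,g)\subset\ker(d_{max,m-i})$, so it is a legitimate test form. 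Evaluating,
$$\int_M\eta\wedge *h = \int_M h\wedge *h + \int_M r\wedge *h = \|h\|^2_{L^2} + 0,$$
the second term vanishing by the independence argument applied to $r\in\overline{\im(d_{min,i-1})}$ and $*h\in\ker(d_{max,m-i})$. Hence $\|h\|^2_{L^2}=0$, so $h=0$ and $\eta\in\overline{\im(d_{min,i-1})}$, i.e., $[\eta]=0$.

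\textbf{Main obstacle.} The delicate step is justifying the integration-by-parts identity $\int_M d_{min,i-1}\alpha\wedge\omega = \pm\langle\alpha,\delta_{max,i-1}(*\omega)\rangle_{L^2}$ when $\alpha$ is only in $\mathcal{D}(d_{min,i-1})$ and $\omega$ is only $L^2$. This requires combining the $L^2$-adjoint relation $(d_{min,i-1})^*=\delta_{max,i-1}$ with the Hodge-star intertwining of the distributional/graph closures (one either invokes density of $\Omega_c^{i-1}(M)$ in $\mathcal{D}(d_{min,i-1})$ with respect to the graph norm and passes to the limit using the $L^2$ continuity of the pairing, or reads off the identity from the general Hodge-star statement of Theorem \ref{dualcomplexform} applied to the max/min pair). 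Once this is in place, everything else is a routine assembly.
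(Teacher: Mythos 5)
Your proof is correct, and it lives in the same circle of ideas as the paper's (Hodge star, the adjoint relations $(d_{min})^*=\delta_{max}$, $(d_{max})^*=\delta_{min}$, and the weak Kodaira decomposition), but both halves are executed differently, and the paper's versions are shorter. For well-definedness the paper never integrates by parts on a dense subspace: it writes each cross term as an $L^2$ inner product of elements of mutually orthogonal subspaces, e.g. $\int_M\alpha\wedge\omega'=\pm\langle\alpha,*\omega'\rangle_{L^2}=0$ because $*\omega'\in\ker(\delta_{max,i-1})=\ker\bigl((d_{min,i-1})^*\bigr)=(\im(d_{min,i-1}))^{\perp}$. This one-line orthogonality statement is exactly the content of your ``integration by parts $+$ density $+$ continuity'' scheme, and it dissolves the domain issue you single out as the main obstacle: you never need $*\omega\in\mathcal{D}(\delta_{max,i-1})$, only that $*\omega$ lies in the kernel of the adjoint, hence in the orthogonal complement of the closure of the image. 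For non-degeneracy the paper is again more direct: from $\langle\eta,*\omega\rangle_{L^2}=0$ for all $\omega\in\ker(d_{max,m-i})$ and $*(\ker(d_{max,m-i}))=\ker(\delta_{max,i-1})$ it concludes $\eta\in(\ker(\delta_{max,i-1}))^{\perp}=\overline{\im(d_{min,i-1})}$, with no decomposition of $\eta$; your variant --- split $\eta=h+r$ via \eqref{oiko} and test against the single form $*h$ --- is a constructive rendering of the same fact, and has the small added value of showing that non-degeneracy is already witnessed by the harmonic test forms $\mathcal{H}^{m-i}_{max}(M,g)$. Two repairs you should make. First, do not attribute the intertwining $\delta_{max,i-1}=\pm*d_{max,m-i}*$ to Theorem \ref{dualcomplexform}: that theorem concerns the $\mathfrak{M}$-complex and presupposes the closed-range hypotheses of Theorem \ref{polcas}, which the present proposition does not assume (that would make the argument circular in scope); the correct source is Prop. \ref{mario} (the max/min complexes are complementary via the Hodge star), or the elementary density argument you sketch. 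Second, state the second-slot non-degeneracy explicitly, as the paper does; it follows from your argument with $i$ replaced by $m-i$ together with $\int_M\omega\wedge\eta=(-1)^{i(m-i)}\int_M\eta\wedge\omega$.
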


\begin{proof}
The first step is to show that \eqref{cambo} is well defined. Let $\eta',\ \omega'$ other two forms such that $[\eta]=[\eta']$ in $\overline{H}^{i}_{2,\mathfrak{M}}(M,g)$,   $[\omega]=[\omega']$ in $\overline{H}^{m-i}_{2,\mathfrak{M}}(M,g)$. 
 Then there exists $\alpha \in \overline{\im(d_{min,i-1})}$ and  $\beta \in \overline{\im(d_{min,m-i-1})}$ such that $\eta =\eta'+\alpha$ and $\omega=\omega'+\beta$. Therefore: $$\int_{M}\eta\wedge\omega=\int_{M}(\eta'+\alpha)\wedge(\omega'+\beta)=\int_{M}\eta'\wedge\omega'+\int_{M}\eta'\wedge\beta+\int_{M}\alpha\wedge\omega'+\int_{M}\alpha\wedge\beta$$
Now $$\int_{M}\eta'\wedge\beta=\pm\int_{M}\langle\eta',*\beta\rangle d\vol_{g}=\pm\langle\eta',*\beta\rangle_{L^{2}\Omega^i(M,g)}=0$$ because $*\beta\in \im(\delta_{min,i})$ and $\alpha\in \ker(d_{max,i})$. In the same way: $$\int_{M}\alpha\wedge\beta=\pm\int_{M}\langle\alpha,*\beta\rangle d\vol_{g}=\pm\langle\alpha,*\beta\rangle_{L^{2}\Omega^i(M,g)}=0$$ because $\alpha\in \im(d_{min,i-1})$ and $*\beta\in \im(\delta_{min,i})$. Finally 
$$\int_{M}\alpha\wedge\omega'=\pm\int_{M}\langle\alpha,*\omega'\rangle d\vol_{g}=\pm\langle\alpha,*\omega'\rangle_{L^{2}\Omega^i(M,g)}=0$$ because $\alpha\in \im(d_{min,i-1})$ and $\omega'\in \ker(\delta_{max,i-1})$.  So we can conclude that \eqref{cambo} is well defined. Now fix $[\eta] \in \overline{H}^{i}_{2,\mathfrak{M}}(M,g)$ and suppose that for each  $[\omega] \in \overline{H}^{m-i}_{2,\mathfrak{M}}(M,g)$ the pairing \eqref{cambo} vanishes. Then this means that for each $\omega\in \ker(d_{max,m-i})$ we have $\int_{M}\eta\wedge \omega=0$. We also know that $\int_{M}\eta\wedge \omega=\pm\langle\eta,*\omega\rangle_{L^{2}\Omega^i(M,g)}$ and that  $*(\ker(d_{max,m-i}))=\ker(\delta_{max,i-1})$ . So by the fact that $(
\ker(\delta_{max,i-1}))^{\bot}=\overline{\im(d_{min,i-1})}$ we obtain that $[\eta]=0$. In the same way if  $[\omega]\in \overline{H}^{m-i}_{2,\mathfrak{M}}(M,g)$ is such that  for each  $[\eta] \in \overline{H}^{i}_{2,\mathfrak{M}}(M,g)$ the pairing \eqref{cambo} vanishes then we know that  for each $\eta\in \ker(d_{max,i})$ we have $\int_{M}\eta\wedge \omega=0$. But we know that  $\int_{M}\eta\wedge \omega=\pm\langle\eta,*\omega\rangle_{L^{2}\Omega^i(M,g)}$. By the fact   that $(\ker(d_{max,i}))^{\bot}=\overline{\im(\delta_{min,i})}$ and that $*(\overline{\im(\delta_{min,i})})=(\overline{\im(d_{min,m-i-1})})$ we obtain that $[\omega]=0$.\\  So we can conclude that the pairing \eqref{cambo} is well defined and non degenerate and this establishes the proposition.
\end{proof}

We have the following immediate corollary:

\begin{cor}
Let $(M,g)$ be an open,  oriented and incomplete riemannian manifold of dimension $m=4n$. Then on $\overline{H}^{2n}_{2,\mathfrak{M}}(M,g)$ the pairing \eqref{cambo} is a symmetric bilinear form.
\end{cor}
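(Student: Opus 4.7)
The plan is to deduce this corollary almost directly from the preceding proposition. That proposition has already established that the pairing
\[
\overline{H}^{i}_{2,\mathfrak{M}}(M,g)\times \overline{H}^{m-i}_{2,\mathfrak{M}}(M,g)\longrightarrow \mathbb{R},\  ([\eta],[\omega])\mapsto \int_{M} \eta\wedge \omega
\]
is well defined and non-degenerate for every $i$, so in the case $m=4n$ and $i=2n$ we already have a well-defined, bilinear, non-degenerate form on $\overline{H}^{2n}_{2,\mathfrak{M}}(M,g)\times \overline{H}^{2n}_{2,\mathfrak{M}}(M,g)$. The only thing left to verify is symmetry, i.e. $\int_{M}\eta\wedge\omega = \int_{M}\omega\wedge\eta$ for every pair of representatives $\eta,\omega\in \ker(d_{max,2n})$.

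The strategy is the usual sign check for the exterior product. On any open manifold and for any forms $\eta\in L^{2}\Omega^{p}(M,g)$ and $\omega\in L^{2}\Omega^{q}(M,g)$ with $p+q=m$ such that the integrals make sense, we have the pointwise identity $\eta\wedge\omega=(-1)^{pq}\,\omega\wedge\eta$; integrating this against the volume form yields $\int_{M}\eta\wedge\omega=(-1)^{pq}\int_{M}\omega\wedge\eta$. In our situation the $L^{2}$ hypotheses guarantee via Cauchy--Schwarz that both integrals converge (indeed $|\eta\wedge\omega|\le |\eta|\cdot|\omega|$ pointwise in the riemannian norms, so $\int_{M}|\eta\wedge\omega|\,d\vol_g\le \|\eta\|_{L^{2}}\|\omega\|_{L^{2}}<\infty$), so the identity holds at the level of integrals.

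Specializing to $p=q=2n$ gives the sign $(-1)^{pq}=(-1)^{4n^{2}}=1$, so $\int_{M}\eta\wedge\omega=\int_{M}\omega\wedge\eta$, which is precisely symmetry. I do not anticipate a real obstacle here: the only mildly delicate point is making sure that the pointwise sign identity survives passage to the integral under the $L^{2}$ hypotheses, and this is immediate from the Cauchy--Schwarz bound above.
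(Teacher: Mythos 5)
Your proof is correct and matches the paper's intent: the paper states this as an ``immediate corollary'' of the preceding proposition, the implicit argument being exactly your sign check $\eta\wedge\omega=(-1)^{(2n)(2n)}\omega\wedge\eta=\omega\wedge\eta$ for middle-degree representatives, with well-definedness and bilinearity already supplied by that proposition. Your additional Cauchy--Schwarz remark on convergence of the integrals is a harmless (and correct) elaboration of a point the paper takes for granted, since $\int_M\eta\wedge\omega=\pm\langle\eta,*\omega\rangle_{L^2\Omega^{2n}(M,g)}$.
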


 We can now state  the following definition:
\begin{defi}
\label{qwerq}
Let $(M,g)$ be an open,  oriented and incomplete riemannian manifold of dimension $m=4n$ such that, for  $i=2n$, $\overline{H}^{2n}_{2,\mathfrak{M}}(M,g)$ is finite dimensional. Then we define the $L^{2}-$signature of $(M,g)$ associated to $\overline{H}^{2n}_{2,\mathfrak{M}}(M,g)$ \footnote{In \cite{FBE} we introduced a different $L^2-$signature for $(M,g)$ using  another kind of $L^2$-cohomology. So when $(M,g)$ is incomplete we may have different kinds of $L^2-$signatures and therefore we have to specify the $L^2-$complex that we are using.}  and we label it $\sigma_{2,\mathfrak{M}}(M,g)$ as the signature of the pairing \eqref{cambo} applied on $\overline{H}^{2n}_{2,\mathfrak{M}}(M,g)$.
\label{nnbb}
\end{defi} 

Before  concluding  this section with the next theorem we need to introduce some notations. Let $(M,g)$ be an open, oriented and incomplete riemannian manifold of dimension $m=4l.$ Consider the complexified cotangent bundle $T^*_{\mathbb{C}}M\cong T^*M\otimes \mathbb{C}.$ Then the metric $g$ admits a natural extension as  a positive definite hermitian metric on $T^*M\otimes \mathbb{C}$ and therefore, in complete analogy to the real case, we can build $L^2\Omega_{\mathbb{C}}^*(M,g)\cong L^2\Omega^*(M,g)\otimes \mathbb{C}$, $d_{max/\mathfrak{M}/min,i}:L^2\Omega_{\mathbb{C}}^i(M,g)\longrightarrow L^2\Omega_{\mathbb{C}}^{i+1}(M,g)$, $(d+\delta)_{max/min}:L^2\Omega_{\mathbb{C}}^*(M,g)\longrightarrow L^2\Omega_{\mathbb{C}}^*(M,g)$ etc, etc. Consider now the endomorphism $\epsilon:\Lambda_{\mathbb{C}}^*(T^*M) \longrightarrow  \Lambda_{\mathbb{C}}^*(T^*M)$ defined by $\epsilon:=(\sqrt{-1})^{p(p-1)+2l}*$ on $\Lambda_{\mathbb{C}}^p(T^*M)$. This is the well known endomorphism of the classical signature theorem. In fact we have $\epsilon^2=Id$ and therefore we get  the well known $\mathbb{Z}_2$ graduation of the signature theorem  given by the eigenspaces of $\epsilon$ associated to eigenvalues $\{\pm1\}$: $\Lambda^*_{\mathbb{C}}(M)\cong (\Lambda^*_{\mathbb{C}}(M))^+\oplus (\Lambda^*_{\mathbb{C}}(M))^-$, $\Omega^*(M,\mathbb{C})\cong (\Omega^*(M,\mathbb{C}))^+\oplus (\Omega^*(M,\mathbb{C}))^-$. Clearly we can extend this $\mathbb{Z}_2$ graduation also in the $L^2$ setting and we get $L^2\Omega_{\mathbb{C}}^*(M,g)\cong (L^2\Omega_{\mathbb{C}}^*(M,g))^+\oplus (L^2\Omega_{\mathbb{C}}^*(M,g))^-$. Another well known property is that $d+\delta$ is odd with respect to $\epsilon$. So we can recall the definition of the \emph{signature operator} as the operator acting in the following way: $$d+\delta:(\Omega_c^*(M,\mathbb{C}))^+\longrightarrow (\Omega_c^*(M,\mathbb{C}))^-.$$ We  label it $D^{sign,+}$. Clearly $D^{sign,-}$, that is $d+\delta:(\Omega_c^*(M,\mathbb{C}))^-\longrightarrow (\Omega_c^*(M,\mathbb{C}))^+$, is the formal adjoint of $D^{sign,+}$. Finally we introduce: $$\Delta^+:= D^{sign,-}\circ D^{sign,+},\ \Delta^+:(\Omega_c^*(M,\mathbb{C}))^+\longrightarrow (\Omega_c^*(M,\mathbb{C}))^+$$ and  $$\Delta^-:= D^{sign,+}\circ D^{sign,-},\ \Delta^-:(\Omega_c^*(M,\mathbb{C}))^-\longrightarrow (\Omega_c^*(M,\mathbb{C}))^-.$$ Our goal now is to define a closed extension  of $D^{sign,+}$ which is a Fredholm  operator on its domain endowed with the graph norm and whose index equals $\sigma_{2,\mathfrak{M}}(M,g)$. In order to get this aim consider again the following operators:
\begin{equation}
\label{ilocchi}
d_{\mathfrak{M}}+d_{\mathfrak{M}}^*:L^2\Omega_{\mathbb{C}}^*(M,g)\longrightarrow L^2\Omega_{\mathbb{C}}^*(M,g)
\end{equation}
 and  $$\Delta_{\mathfrak{M}}:=(d_{\mathfrak{M}}+d_{\mathfrak{M}}^*)\circ (d_{\mathfrak{M}}+d_{\mathfrak{M}}^*).$$  From Theorems  \ref{dualcomplex} and \ref{dualcomplexform} we get that both $\mathcal{D}(d_{\mathfrak{M}}+d_{\mathfrak{M}}^*)$ and $\mathcal{D}(\Delta_{\mathfrak{M}})$ are invariant under the action of the Hodge star operator $*$. Therefore they are also  invariant under the action of  $\epsilon$. Moreover $d_{\mathfrak{M}}+d_{\mathfrak{M}}^*$ is odd with respect to $\epsilon$. In particular we get
\begin{equation}
\label{muller}
\mathcal{D}(d_{\mathfrak{M}}+d_{\mathfrak{M}}^*)=(\mathcal{D}(d_{\mathfrak{M}}+d_{\mathfrak{M}}^*))^+\oplus (\mathcal{D}(d_{\mathfrak{M}}+d_{\mathfrak{M}}^*))^-
\end{equation}
and analogously 
\begin{equation}
\label{mullerq}
\mathcal{D}(\Delta_{\mathfrak{M}})=(\mathcal{D}(\Delta_{\mathfrak{M}}))^+\oplus (\mathcal{D}(\Delta_{\mathfrak{M}}))^-
\end{equation}
Define now $(d_{\mathfrak{M}}+d_{\mathfrak{M}}^*)^{+/-}$ as the restriction of $d_{\mathfrak{M}}+d_{\mathfrak{M}}^*$ to $(\mathcal{D}(d_{\mathfrak{M}}+d_{\mathfrak{M}}^*))^{+/-}$ respectively. Therefore we have $$(d_{\mathfrak{M}}+d_{\mathfrak{M}}^*)^+:(\mathcal{D}(d_{\mathfrak{M}}+d_{\mathfrak{M}}^*))^+\longrightarrow  (L^2\Omega_{\mathbb{C}}^*(M,g))^-$$ and analogously $$(d_{\mathfrak{M}}+d_{\mathfrak{M}}^*)^-:(\mathcal{D}(d_{\mathfrak{M}}+d_{\mathfrak{M}}^*))^-\longrightarrow  (L^2\Omega_{\mathbb{C}}^*(M,g))^+.$$ Clearly
$(d_{\mathfrak{M}}+d_{\mathfrak{M}}^*)^{-}$ is the adjoint of  $(d_{\mathfrak{M}}+d_{\mathfrak{M}}^*)^{+}.$ Define $\Delta_{\mathfrak{M}}^{+}$ as $\Delta_{\mathfrak{M}}^{+}:=(d_{\mathfrak{M}}+d_{\mathfrak{M}}^*)^-\circ (d_{\mathfrak{M}}+d_{\mathfrak{M}}^*)^+$ and analogously $\Delta_{\mathfrak{M}}^{-}:=(d_{\mathfrak{M}}+d_{\mathfrak{M}}^*)^+\circ (d_{\mathfrak{M}}+d_{\mathfrak{M}}^*)^-$. Finally we are in position to prove the last theorem of this section:
\begin{teo}
\label{luxlux}
Let $(M,g)$ be an  open, oriented and incomplete riemannian manifold of dimension $4l$ such that $(L^2\Omega^i(M,g),d_{\mathfrak{M},i})$ is a Fredholm complex.  Then   $(d_{\mathfrak{M}}+d_{\mathfrak{M}}^*)^{+}$ is a Fredholm operator on its domain endowed with the graph norm and  we have: $$\sigma_{2,\mathfrak{M}}(M,g)=\ind((d_{\mathfrak{M}}+d_{\mathfrak{M}}^*)^+).$$
\end{teo}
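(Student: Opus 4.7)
The plan is to deduce Fredholmness of $(d_{\mathfrak{M}}+d_{\mathfrak{M}}^*)^+$ from Fredholmness of the full roof operator, and then to identify the index with $\sigma_{2,\mathfrak{M}}(M,g)$ via the Hodge theorem (Theorem \ref{giove}) combined with the standard "pair-off" argument on the $\pm 1$ eigenspaces of $\epsilon$.

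First I would establish Fredholmness. Since $(L^2\Omega^i(M,g),d_{\mathfrak{M},i})$ is Fredholm, Proposition \ref{worref} gives that $d_{\mathfrak{M}}+d_{\mathfrak{M}}^*$ is Fredholm on $\mathcal{D}(d_{\mathfrak{M}}+d_{\mathfrak{M}}^*)$ equipped with the graph norm. Theorems \ref{dualcomplex} and \ref{dualcomplexform} give $d_{\mathfrak{M}}^*=\pm *d_{\mathfrak{M}}*$, which implies that the Hodge star preserves $\mathcal{D}(d_{\mathfrak{M}}+d_{\mathfrak{M}}^*)$, and therefore so does $\epsilon$; this is the content of the decomposition \eqref{muller}. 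Since $d_{\mathfrak{M}}+d_{\mathfrak{M}}^*$ is odd with respect to $\epsilon$, its restriction $(d_{\mathfrak{M}}+d_{\mathfrak{M}}^*)^+$ has kernel $\ker(d_{\mathfrak{M}}+d_{\mathfrak{M}}^*)\cap(L^2\Omega^*_{\mathbb{C}}(M,g))^+$ (finite dimensional) and closed range with finite dimensional cokernel equal to $\ker(d_{\mathfrak{M}}+d_{\mathfrak{M}}^*)\cap(L^2\Omega^*_{\mathbb{C}}(M,g))^-$. Hence $(d_{\mathfrak{M}}+d_{\mathfrak{M}}^*)^+$ is Fredholm.

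Next I compute the index. Denoting $\mathcal{H}_{\mathfrak{M}}:=\ker(d_{\mathfrak{M}}+d_{\mathfrak{M}}^*)=\bigoplus_{i=0}^{4l}\mathcal{H}_{\mathfrak{M}}^i$ and $\mathcal{H}_{\mathfrak{M}}^{\pm}$ the $\pm 1$ eigenspaces of $\epsilon$ on $\mathcal{H}_{\mathfrak{M}}$, the argument above shows
\begin{equation}
\ind((d_{\mathfrak{M}}+d_{\mathfrak{M}}^*)^+)=\dim(\mathcal{H}_{\mathfrak{M}}^+)-\dim(\mathcal{H}_{\mathfrak{M}}^-).
\end{equation}
By Theorem \ref{giove} we have $\mathcal{H}_{\mathfrak{M}}^i\cong H^i_{2,\mathfrak{M}}(M,g)$. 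For $i\neq 2l$ the operator $\epsilon$ sends $\mathcal{H}_{\mathfrak{M}}^i$ isomorphically onto $\mathcal{H}_{\mathfrak{M}}^{4l-i}$, so on $\mathcal{H}_{\mathfrak{M}}^i\oplus\mathcal{H}_{\mathfrak{M}}^{4l-i}$ the projections onto the $\pm 1$ eigenspaces of $\epsilon$ are graph subspaces of equal dimension and cancel out in the difference $\dim(\mathcal{H}_{\mathfrak{M}}^+)-\dim(\mathcal{H}_{\mathfrak{M}}^-)$. Only the middle degree contributes:
\begin{equation}
\ind((d_{\mathfrak{M}}+d_{\mathfrak{M}}^*)^+)=\dim(\mathcal{H}_{\mathfrak{M}}^{2l,+})-\dim(\mathcal{H}_{\mathfrak{M}}^{2l,-}).
\end{equation}

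Finally I have to match this difference with $\sigma_{2,\mathfrak{M}}(M,g)$. Under the Fredholm hypothesis, $\im(d_{\mathfrak{M},i})$ is closed, so $\overline{H}^{2l}_{2,\mathfrak{M}}(M,g)=H^{2l}_{2,\mathfrak{M}}(M,g)\cong\mathcal{H}^{2l}_{\mathfrak{M}}(M,g)$, and on this space the pairing $(\eta,\omega)\mapsto\int_M\eta\wedge\omega$ reads, up to a positive constant built into $\epsilon$, as $(\eta,\omega)\mapsto\langle\epsilon\eta,\omega\rangle_{L^2}$ (this is the classical computation identifying $\epsilon$ with the symmetrized Hodge star). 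Since $\epsilon$ is self-adjoint with $\epsilon^2=\id$, this bilinear form is diagonalized by the $\mathbb{Z}_2$-splitting of $\mathcal{H}^{2l}_{\mathfrak{M}}$, with $+1$ eigenvectors contributing positively and $-1$ eigenvectors negatively. Thus by Definition \ref{qwerq}
\begin{equation}
\sigma_{2,\mathfrak{M}}(M,g)=\dim(\mathcal{H}^{2l,+}_{\mathfrak{M}})-\dim(\mathcal{H}^{2l,-}_{\mathfrak{M}})=\ind((d_{\mathfrak{M}}+d_{\mathfrak{M}}^*)^+),
\end{equation}
which completes the proof. The main obstacle is the last identification, i.e.\ verifying that the intersection pairing on the harmonic representatives coincides with the quadratic form associated to $\epsilon$; this requires the constant $(\sqrt{-1})^{p(p-1)+2l}$ in the definition of $\epsilon$ to conspire with the Hodge star formula $\int_M\eta\wedge\omega=\pm\langle\eta,*\omega\rangle_{L^2}$ in exactly the middle degree $p=2l$, which is a standard (but degree-sensitive) computation.
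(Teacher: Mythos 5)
Your proposal is correct and follows essentially the same route as the paper: both deduce Fredholmness of $(d_{\mathfrak{M}}+d_{\mathfrak{M}}^*)^{+}$ by intersecting the kernel and the (closed) range of the full operator with the $\pm 1$ eigenspaces of $\epsilon$, and both compute the index by the classical signature-theorem pair-off, in which the spaces $\{\eta\pm\epsilon(\eta):\eta\in\mathcal{H}^k_{\mathfrak{M}}(M,g)\}$ cancel for $k\neq 2l$ and only the middle-degree eigenspaces of $\epsilon=*$ survive. If anything, your final step --- checking that the wedge pairing on harmonic representatives of $\overline{H}^{2l}_{2,\mathfrak{M}}(M,g)$ is the quadratic form of the involution $\epsilon$, so that its signature equals $\dim(\mathcal{H}^{2l}_{\mathfrak{M}})^{+}-\dim(\mathcal{H}^{2l}_{\mathfrak{M}})^{-}$ --- is spelled out more explicitly than in the paper, which asserts this identification without detail.
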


\begin{proof} 
By the assumptions $d_{\mathfrak{M}}+d_{\mathfrak{M}}^*$  is a Fredholm operator on its domain endowed with the graph norm.  By the fact that $$\ker((d_{\mathfrak{M}}+d_{\mathfrak{M}}^*)^{+/-})=\ker(d_{\mathfrak{M}}+d_{\mathfrak{M}}^*)\cap (L^2\Omega^{*}_{\mathbb{C}}(M,g))^{+/-}$$ and that $$\im((d_{\mathfrak{M}}+d_{\mathfrak{M}}^*)^{+/-})=\im(d_{\mathfrak{M}}+d_{\mathfrak{M}}^*)\cap (L^2\Omega^{*}_{\mathbb{C}}(M,g))^{+/-}$$  we get that also $(d_{\mathfrak{M}}+d_{\mathfrak{M}}^*)^{+/-}$ are Fredholm operators on their respective domains endowed with the graph norm. This proves the first part of the proposition.\\Now, in order to prove the second part, we follows, with the necessary modifications,  the classic proof of the signature Theorem, see for example \cite{BGV}.
We start observing that $$\ind((d_{\mathfrak{M}}+d_{\mathfrak{M}}^*)^+)=\dim(\ker(\Delta_{\mathfrak{M}}^+))-\dim(\ker(\Delta_{\mathfrak{M}}^-)).$$ Moreover we have: $$\ker(\Delta_{\mathfrak{M}}^{+/-})=(\bigoplus_{k=0}^{2l-1}(\ker(\Delta_{\mathfrak{M}}^{+/-})\cap(L^2\Omega_{\mathbb{C}}^k(M,g)\oplus L^2\Omega_{\mathbb{C}}^{4l-k}(M,g))))\oplus (\ker(\Delta_{\mathfrak{M}}^{+/-})\cap L^2\Omega_{\mathbb{C}}^{2l}(M,g)).$$
Now if $\omega \in \ker(\Delta_{\mathfrak{M}}^{+})\cap(L^2\Omega_{\mathbb{C}}^k(M,g)\oplus L^2\Omega_{\mathbb{C}}^{4l-k}(M,g))$ with $k\leq 2l-1$ then $\omega=\eta+\epsilon(\eta)$ with $\eta\in \mathcal{H}^k_{\mathfrak{M}}(M,g).$ On the other hand if $\eta \in  \mathcal{H}^k_{\mathfrak{M}}(M,g)$ then $\eta+\epsilon(\eta)\in \ker(\Delta_{\mathfrak{M}}^{+})\cap(L^2\Omega_{\mathbb{C}}^k(M,g)\oplus L^2\Omega_{\mathbb{C}}^{4l-k}(M,g))$. Therefore we can conclude that  $$\ker(\Delta_{\mathfrak{M}}^{+})\cap(L^2\Omega_{\mathbb{C}}^k(M,g)\oplus L^2\Omega_{\mathbb{C}}^{4l-k}(M,g))=\{\eta+\epsilon(\eta),\ \eta\in \mathcal{H}^k_{\mathfrak{M}}(M,g)\}.$$ The same observations lead to the conclusion that  $$\ker(\Delta_{\mathfrak{M}}^{-})\cap(L^2\Omega_{\mathbb{C}}^k(M,g)\oplus L^2\Omega_{\mathbb{C}}^{4l-k}(M,g))=\{\eta-\epsilon(\eta),\ \eta\in \mathcal{H}^k_{\mathfrak{M}}(M,g)\}.$$ 
In this way we get  that $$\bigoplus_{k=0}^{2l-1}(\ker(\Delta_{\mathfrak{M}}^{+})\cap(L^2\Omega_{\mathbb{C}}^k(M,g)\oplus L^2\Omega_{\mathbb{C}}^{4l-k}(M,g)))$$ is isomorphic to $$\bigoplus_{k=0}^{2l-1}(\ker(\Delta_{\mathfrak{M}}^{-})\cap(L^2\Omega_{\mathbb{C}}^k(M,g)\oplus L^2\Omega_{\mathbb{C}}^{4l-k}(M,g))).$$ So we proved that $$\ind((d_{\mathfrak{M}}+d_{\mathfrak{M}}^*)^+)=$$ $$=\dim(\ker(\Delta_{\mathfrak{M}}^{+})\cap L^2\Omega_{\mathbb{C}}^{2l}(M,g))-\dim(\ker(\Delta_{\mathfrak{M}}^{-})\cap L^2\Omega_{\mathbb{C}}^{2l}(M,g)).$$ But $\ker(\Delta_{\mathfrak{M}})\cap L^2\Omega_{\mathbb{C}}^{2l}(M,g)=\mathcal{H}^{2l}_{\mathfrak{M}}(M,g)$ and this implies that $\ker(\Delta_{\mathfrak{M}}^{+/-})\cap L^2\Omega_{\mathbb{C}}^{2l}(M,g)=(\mathcal{H}^{2l}_{\mathfrak{M}}(M,g))^{+/-}.$ Now if $\eta\in (\mathcal{H}^{2l}_{\mathfrak{M}}(M,g))^{+}$ this means that $\eta \in \mathcal{H}^{2l}_{\mathfrak{M}}(M,g)$ and that $\epsilon(\eta)=\eta$ that is $*\eta=\eta.$ Analogously if $\eta\in (\mathcal{H}^{2l}_{\mathfrak{M}}(M,g))^{-}$ this means that $\eta \in \mathcal{H}^{2l}_{\mathfrak{M}}(M,g)$ and that $\epsilon(\eta)=-\eta$ that is $*\eta=-\eta.$ In conclusion   we proved that: $$\ind((d_{\mathfrak{M}}+d_{\mathfrak{M}}^*)^+)=\dim(\ker(\Delta_{\mathfrak{M}}^{+})\cap L^2\Omega_{\mathbb{C}}^{2l}(M,g))-\dim(\ker(\Delta_{\mathfrak{M}}^{-})\cap L^2\Omega_{\mathbb{C}}^{2l}(M,g))=$$ $$=\dim(\mathcal{H}^i_{\mathfrak{M}}(M,g))^{+}-\dim(\mathcal{H}^i_{\mathfrak{M}}(M,g))^{-}=\sigma_{2,\mathfrak{M}}(M,g)$$ and this completes the proof.
\end{proof}

\section{Some examples and applications}

It is  not difficult to find examples of  open, oriented and incomplete riemannian manifolds $(M,g)$ of dimension $m$ such that $\im(d_{min,i})$ is closed in $L^2\Omega^{i+1}(M,g)$ for all $i=0,...,m$. We can consider, for example,  a compact and oriented manifold with boundary endowed with a smooth metric up to the boundary  as in \cite{BL}, admissible riemannian pseudomanifold as in \cite{C} or in \cite{MN}, compact stratified pseudomanifold endowed with a \emph{quasi edge metric with weights} as in \cite{FB} or  the Weil-Peterson metric on the regular part of the moduli space of curves as in \cite{LS}.   In these examples  the maximal $L^2$-de Rham cohomology, $H^i_{2,max}(M,g)$, is finite dimensional for each $i=0,...,m$. As explained in the proof of Theorem \ref{zarro} this implies that $H^i_{2,min}(M,g)\cong H^{m-i}_{2,max}(M,g)$ and therefore $H^i_{2,min}(M,g)$ is finite dimensional as well.  Finally, as recalled in Prop. \ref{topoamotore}, we can conclude that  $\im(d_{min,i})$ is closed in $L^2\Omega^{i+1}$ for each $i=0,...,m$. Therefore, in all these cases, we can always build the complex $(L^2\Omega^i(M,g),d_{\mathfrak{M},i})$. What is much more complicated is to find examples of  open, oriented and incomplete riemannian manifolds $(M,g)$ such that $(L^2\Omega^i(M,g), d_{\mathfrak{M},i})$ is a Fredholm complex.  The first part of this last section is devoted to this task.\\ First of all we recall that two riemannian metrics $g$ and $h$ are said \emph{quasi isometric}  if there exists a positive real number $c$ such that $\frac{1}{c}h\leq g\leq ch$. It is easy to check that if $M$ is an oriented manifold of dimension $m$ and if $g$ and $h$ are two riemannian metrics over $M$ quasi-isometric then, for every $i=0,...,m$, $L^2\Omega^i(M,g)=L^2\Omega^i(M,h)$, $\mathcal{D}(d_{max,i})$, $\ker(d_{max,i})$, $\im(d_{max,i})$ (with respect to $g$) coincide respectively with $\mathcal{D}(d_{max,i})$, $\ker(d_{max,i})$, $\im(d_{max,i})$ (with respect to $h$) and analogously 
$\mathcal{D}(d_{min,i})$, $\ker(d_{min,i})$, $\im(d_{min,i})$ (with respect to $g$) coincide respectively with $\mathcal{D}(d_{min,i})$, $\ker(d_{min,i})$, $\im(d_{min,i})$ (with respect to $h$).\\
Now we describe the first example; we start with  the following definition from \cite{BrL}.\\Let $\overline{M}$ be a compact manifold with boundary $N:=\partial{\overline{M}}$. Let us label its interior with $M$.  Let $U\cong [0,1)\times N$ be a collar neighborhood for $N$. Let $g$ be a riemannian metric over $M$ such that $g$ restricted to $U$ is isometric to $h(x)(dx^2+x^2g_{N}(x))$ where $g_{N}(x)$ is a family  of metric on $N$ depending on $x$ which varies smoothly
in $(0,1)$ and continuously $[0,1)$ and $h\in C^{\infty}((0,1)\times N)$ satisfies:
$$\sup_{p\in N}|(x\partial_x)^jx^{-c}h(x,p)-1|=O(x^{\delta})\ \text{as}\ x\rightarrow 0,\ j=0,1$$ 
$$\sup_{p\in N}\|h(x,p)^{-1}d_Nh(x,p)\|_{T_p^*N,g_{N}(x)}=O(x^{\delta})\ \text{as}\ x\rightarrow 0$$ and 
$$\sup_{p\in N}(|(g^1-g^0)|_{(x,p)}+x|\omega^0-\omega^1|_{(x,p)})=O(x^{\delta})\ \text{as}\ x\rightarrow 0$$
for some $\delta>0$ and $c>-1$ and where $g^0:=dx^2+x^2g_N(0)$, $g_1=dx^2+x^2g_N(x)$ and $\omega^0, \omega^1$ are the connection forms of the Levi-Civita connection $\nabla^0, \nabla^1$ of $g^0$ and $g^1$ respectively.\\
 The metric $g$ is called a \textbf{conformally conic metric}.  As it is showed in \cite{BruL}, \cite{BrLe} and \cite{BPS} if we consider a complex projective curve $V\subset \mathbb{C}\mathbb{P}^n$  and $g$ is the riemannian metric induced by the Fubini-Study metric of $\mathbb{C}\mathbb{P}^n$ on the regular part of $V$ then $g$ is a conformally conic metric.\\According to \cite{BrL} if $(M,g)$ is a conformally conic riemannian manifold then {\em every closed extension of} $(\Omega^i_c(M), d_i)$ {\em is a Fredholm complex}. In particular, according to Prop. \ref{topoamotore}, $\im(d_{min,i})$ is closed for each $i$.  Therefore we have the following corollary:
\begin{cor}
\label{coroll}
Let $(M,g)$ be an oriented riemannian manifold where $M$ is the interior of a compact manifold with boundary and $g$ is a riemannian metric on $M$ quasi isometric to a  conformally conic metric. Then Theorems \ref{polcas}, \ref{nuvoloso}, \ref{zarro},  \ref{giove},  \ref{eulerissimo}, and \ref{luxlux} and their relative corollaries hold for $(M,g)$. In particular they hold when $M$ is the regular part of  a complex projective curve $V\subset \mathbb{C}\mathbb{P}^n$  and $g$ is any riemannian metric on $M$ quasi isometric to the metric induced by the Fubini-Study metric of $\mathbb{C}\mathbb{P}^n$. 
\end{cor}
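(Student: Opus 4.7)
The strategy is to verify the two hypotheses that feed the machinery of the preceding theorems, namely (i) that $\im(d_{min,i})$ is closed in $L^2\Omega^{i+1}(M,g)$ for each $i=0,\dots,m$, and (ii) that $(L^2\Omega^i(M,g),d_{\mathfrak{M},i})$ is a Fredholm complex. Once both are in hand, the listed theorems apply without further work.

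The first move is a quasi-isometry reduction. As recorded just before the statement, if $g$ and $h$ are quasi-isometric then the spaces $L^2\Omega^i(M,g)=L^2\Omega^i(M,h)$, and the domains, kernels and images of $d_{max,i}$ and $d_{min,i}$ coincide for the two metrics. Hence the validity of the closedness of $\im(d_{min,i})$, the Fredholm property of any closed extension of $(\Omega^i_c(M),d_i)$, and the construction of $d_{\mathfrak{M},i}$ depend only on the quasi-isometry class of $g$. We may therefore assume that $g$ is itself a conformally conic metric.

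Now I invoke the result of Br\"uning--Lesch cited above: when $g$ is conformally conic, every closed extension of $(\Omega^i_c(M),d_i)$ is a Fredholm complex. In particular $(L^2\Omega^i(M,g),d_{min,i})$ is Fredholm, so by Proposition \ref{topoamotore} its differentials have closed range, which verifies hypothesis (i). Consequently Theorem \ref{polcas} applies and produces the intermediate complex $(L^2\Omega^i(M,g),d_{\mathfrak{M},i})$. By its very construction each $d_{\mathfrak{M},i}$ is a closed operator extending $d_i:\Omega^i_c(M)\to\Omega^{i+1}_c(M)$, so $(L^2\Omega^i(M,g),d_{\mathfrak{M},i})$ is itself a closed extension of the de Rham complex. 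Applying the Br\"uning--Lesch statement once more, this closed extension is Fredholm, which is exactly hypothesis (ii).

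With both hypotheses verified, Theorems \ref{polcas}, \ref{nuvoloso}, \ref{zarro}, \ref{giove}, \ref{eulerissimo} and \ref{luxlux} and their corollaries apply verbatim to $(M,g)$. For the projective curve case, the regular part of $V\subset\mathbb{CP}^n$ with the induced Fubini--Study metric is, by the results of \cite{BruL}, \cite{BrLe}, \cite{BPS}, a conformally conic riemannian manifold, so any metric quasi-isometric to it falls under the general statement. No real obstacle arises: the whole proof is an assembly of the quasi-isometry invariance, the Br\"uning--Lesch Fredholm theorem for conformally conic metrics, and the abstract results established in the earlier sections.
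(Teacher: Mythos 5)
Your proof is correct and follows essentially the same route as the paper: quasi-isometry invariance of the $d_{max/min}$ data, the Br\"uning--Lesch theorem that every closed extension of $(\Omega^i_c(M),d_i)$ is Fredholm for a conformally conic metric, Proposition \ref{topoamotore} to get closedness of $\im(d_{min,i})$, and the observation that $(L^2\Omega^i(M,g),d_{\mathfrak{M},i})$ is itself a closed extension and hence Fredholm, after which the listed theorems apply; the curve case is handled by the same references the paper cites. The only loose phrase is the claim that ``the construction of $d_{\mathfrak{M},i}$'' depends only on the quasi-isometry class (the operator itself involves orthogonal projections and may depend on the metric), but this is harmless since your argument only uses the quasi-isometry invariance of the hypotheses, not of the resulting operator.
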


Another example is the following: consider again a compact and oriented riemannian manifold with boundary $\overline{M}$. As above let us label with $N$ the boundary of $\overline{M}$, with $M$ its interior and finally with $U\cong [0,1)$ a collar  neighborhood of $N$. Let $g$ be a riemannian metric on $M$ such that, over $U$, it takes the form $dx^2+x^{2\beta}h$ where $\beta>1$ and $h$ is a riemannian metric on $N$. A riemannian metric like that is called \textbf{metric horn}. In \cite{LP} the authors prove that if we consider  the Gauss-Bonnet operator 
\begin{equation}
\label{hurleybona}
d+\delta:L^2\Omega^*(M,g)\rightarrow L^2\Omega^*(M,g)
\end{equation}
 with domain given by $\Omega^*_c(M)$ then every closed extension of  \eqref{hurleybona} is a Fredholm operator on its domain endowed with the graph norm. This, according to  Lemma 2.3 of \cite{BL} and to Prop. \ref{worref}, implies that {\em every closed extension of} $(\Omega^i_c(M,g),g)$ {\em is a Fredholm complex}. In particular, according to Prop. \ref{topoamotore}, $\im(d_{min,i})$ is closed for each $i$. Therefore we have:
\begin{cor}
\label{corolll}
Let $(M,g)$ be an  oriented riemannian manifold where $M$ is the interior of a compact manifold with boundary and let $g$ be a riemannian metric on $M$  quasi-isometric to a  metric horn. Then Theorems \ref{polcas}, \ref{nuvoloso}, \ref{zarro},  \ref{giove},  \ref{eulerissimo}, and \ref{luxlux} and their relative corollaries hold for $(M,g)$.
\end{cor}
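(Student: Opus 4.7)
The plan is to check the hypotheses underlying each of Theorems \ref{polcas}, \ref{nuvoloso}, \ref{zarro}, \ref{giove}, \ref{eulerissimo}, and \ref{luxlux}. By the quasi-isometry invariance recalled at the beginning of this section, all the $L^2$ data that enter those hypotheses (the Hilbert spaces $L^2\Omega^i(M,\cdot)$, the domains, kernels, and images of $d_{min,i}$ and $d_{max,i}$) are preserved when $g$ is replaced by the model metric horn. Hence it suffices to prove the corollary assuming $g$ itself is a metric horn on the collar.

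Next I invoke the Lesch--Peinecke result cited in \cite{LP}: under the metric horn assumption, every closed extension of the Gauss--Bonnet operator $d+\delta$ initially defined on $\Omega_c^*(M)$ is a Fredholm operator on its domain endowed with the graph norm. Lemma 2.3 of \cite{BL}, together with Proposition \ref{worref}, translates this into the statement that every closed extension of the de Rham complex $(\Omega_c^i(M),d_i)$ is a Fredholm Hilbert complex. In particular, both the minimal and the maximal complexes are Fredholm, so by Proposition \ref{topoamotore} all their differentials have closed range. The closedness of $\im(d_{min,i})$ for every $i$ is precisely the standing hypothesis of Theorem \ref{polcas}; consequently Theorems \ref{polcas} and \ref{giove} apply directly, producing the intermediate complex $(L^2\Omega^i(M,g),d_{\mathfrak{M},i})$ with all the properties stated there.

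It remains to check that $(L^2\Omega^i(M,g),d_{\mathfrak{M},i})$ is itself a Fredholm complex, which is the common hypothesis of Theorems \ref{nuvoloso}, \ref{zarro}, \ref{eulerissimo}, and \ref{luxlux}. For this I use Theorem \ref{giove} together with \eqref{porcini}: the $L^2$-cohomology $H^i_{2,\mathfrak{M}}(M,g)$ is isomorphic to the harmonic space
\[
\mathcal{H}^i_{\mathfrak{M}}(M,g)=\ker(d_{max,i})\cap\ker(\delta_{max,i-1})=\mathcal{H}^i_{max}(M,g).
\]
Since the maximal complex is already Fredholm, $\mathcal{H}^i_{max}(M,g)\cong H^i_{2,max}(M,g)$ is finite-dimensional, hence so is $H^i_{2,\mathfrak{M}}(M,g)$ for every $i$. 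This is precisely the Fredholm property for the intermediate complex. With this in hand, Theorems \ref{nuvoloso}, \ref{zarro}, \ref{eulerissimo} apply immediately, and Theorem \ref{luxlux} applies whenever $\dim M = 4l$, as required by its formulation.

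The only subtle point is the invocation of \cite{LP}: verifying that the metric horn framework there matches our definition and that the passage from Gauss--Bonnet Fredholmness to Fredholmness of \emph{every} closed extension of the de Rham complex, via Lemma 2.3 of \cite{BL} and Proposition \ref{worref}, is indeed valid in this generality. Once this is granted, the rest is a straightforward application of the abstract Hilbert-complex machinery developed in the preceding sections.
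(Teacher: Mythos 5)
Your overall strategy coincides with the paper's up to the last step: reduce by quasi-isometry invariance to the case where $g$ is an actual metric horn, quote Lesch--Peyerimhoff (\cite{LP}) for the Fredholmness of every closed extension of the Gauss--Bonnet operator, convert this via Lemma 2.3 of \cite{BL} and Proposition \ref{worref} into Fredholmness of every closed extension of the de Rham complex, and deduce from Proposition \ref{topoamotore} that $\im(d_{min,i})$ is closed, so that Theorems \ref{polcas} and \ref{giove} apply. All of that is exactly the paper's argument.

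The gap is in your verification that $(L^2\Omega^i(M,g),d_{\mathfrak{M},i})$ is a Fredholm complex. You claim that Fredholmness of the maximal complex gives $\mathcal{H}^i_{max}(M,g)\cong H^i_{2,max}(M,g)$. This is false in general: what Proposition \ref{topoamotore} yields for the maximal complex is $H^i_{2,max}(M,g)\cong\mathcal{H}^i_{abs}(M,g)=\ker(d_{max,i})\cap\ker(\delta_{min,i-1})$, whereas $\mathcal{H}^i_{max}(M,g)=\ker(d_{max,i})\cap\ker(\delta_{max,i-1})$ is a priori strictly larger -- by \eqref{oiko} it computes $\ker(d_{max,i})/\overline{\im(d_{min,i-1})}$, not $\ker(d_{max,i})/\overline{\im(d_{max,i-1})}$. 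Already for $M=(0,1)$ with the euclidean metric one has $H^1_{2,max}(M,g)=0$ (every $L^2$ function on $(0,1)$ has an $L^2$ primitive) while $\mathcal{H}^1_{max}(M,g)\cong\mathbb{R}$, spanned by $dx$; and there the maximal complex is Fredholm. Moreover, were your argument valid, it would prove for an arbitrary incomplete manifold that Fredholmness of the maximal complex alone forces Fredholmness of the intermediate complex; this would make all the examples listed at the start of the last section (quasi-edge metrics, Weil--Petersson, etc.) automatically intermediate-Fredholm, contradicting the paper's explicit point that such examples are much harder to produce than examples with $\im(d_{min,i})$ closed. The repair is immediate and is what the paper does: since $d_{min,i}\subseteq d_{\mathfrak{M},i}\subseteq d_{max,i}$ and $d_{max,i}$ acts as the distributional differential, the complex $(L^2\Omega^i(M,g),d_{\mathfrak{M},i})$ is itself a closed extension of $(\Omega^i_c(M),d_i)$ in the sense of the paper; the statement you had already established -- that \emph{every} closed extension is Fredholm -- therefore applies to it directly, and Theorems \ref{nuvoloso}, \ref{zarro}, \ref{eulerissimo}, and (when $\dim M=4l$) \ref{luxlux} follow.
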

Furthermore we mention that recently,  in his PhD thesis \cite{FLA},  Frank Lapp generalised  the result of Lesch  and Peyerimhoff to the following case: consider again a compact and oriented manifold with boundary $\overline{M}$ such that the boundary, that we still label with $N$, is diffeomorphic to a product of closed manifolds: $N\cong N_1\times...\times N_q$. Let $U$ be a collar neighborhood of $N$ and let $g$ be a riemannian metric on $M$ such that over $U\cong  [0,1)\times N_1\times...\times N_q$ it takes the form 
\begin{equation}
\label{cornicorni}
dx^2+h_1^2(x)g_{1}+...+h_q^2(x)g_q
\end{equation}
where $h_i(x)\in C^{\infty}((0,1], (0,\infty))$ and $h_1,...,h_q$ are riemannian metrics on $N_1,...,N_q$ respectively. A metric with this shape is called a \textbf{multiply warped product metric.} In his thesis, see  \cite{FLA} pag. 115 Theorem 5.3.5, Lapp proved  that if for some constant $K>0$ and $\beta>1$ 
\begin{equation}
\label{raffipa}
\max_{j=1,...,q}h_j(x)\leq Kr^{\beta}\ x\in (0,1)
\end{equation} and for every $j=1,...,q$ there exists a real number $c_j$ such that  
\begin{equation}
\label{martiemati}
\int_0^1x|\log x||\frac{h'_j(x)}{h_j(x)}-\frac{c_j}{x}|^2dx<\infty
\end{equation}
 then every closed extension of the Gauss-Bonnet operator $$d+\delta:L^2\Omega^*(M,g)\rightarrow L^2\Omega^*(M,g)$$ with domain given by $\Omega^*_c(M)$, is a Fredholm operator on its  domain endowed with the graph norm. In particular this is true when $g$ is a \textbf{multiply metric horns} that is in \eqref{cornicorni} all the warping functions satisfy the following requirement: $$h_i(x)=x^{\beta_i},\ \beta_i>1,\ i=1,...,q.$$ Therefore, using again Prop. \ref{worref} and Lemma 2.3 of \cite{BL}, we can conclude that {\em every closed extension of} $(\Omega_c^i(M),g)$ {\em is a Fredholm complex}. In particular, again according to Prop. \ref{topoamotore}, $\im(d_{min,i})$ is closed for each $i$. So we have the following corollary:

\begin{cor}
\label{BnB}
Let $(M,g)$ be an  oriented riemannian manifold where $M$ is the interior of a compact manifold with boundary $\overline{M}$. Suppose that the boundary is diffeomorphic to a product $\partial{\overline{M}}\cong N_1\times...\times N_q$. Let $g$ be  a riemannian metric on $M$ quasi-isometric to a   multiply warped product metric which satisfies condition \eqref{raffipa} and \eqref{martiemati}. Then Theorems \ref{polcas}, \ref{nuvoloso}, \ref{zarro},  \ref{giove},  \ref{eulerissimo}, and \ref{luxlux}  and their relative corollaries hold for $(M,g)$.
\end{cor}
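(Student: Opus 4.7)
The plan is to reduce the corollary to the hypothesis ``$\im(d_{min,i})$ is closed for each $i$ and $(L^2\Omega^i(M,g),d_{\mathfrak{M},i})$ is a Fredholm complex'', after which Theorems \ref{polcas}, \ref{nuvoloso}, \ref{zarro}, \ref{giove}, \ref{eulerissimo} and \ref{luxlux} apply directly as black boxes. So essentially the entire proof is a verification of these two assumptions, and this is already the pattern used in Corollaries \ref{coroll} and \ref{corolll}.

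First I would reduce to the case in which $g$ is itself a multiply warped product metric satisfying \eqref{raffipa} and \eqref{martiemati}, not merely quasi-isometric to one. This is legitimate because, as recalled at the beginning of the section, quasi-isometric metrics on an oriented manifold produce the same Hilbert spaces $L^2\Omega^i$, and the domains, kernels and images of $d_{\max,i}$ and $d_{\min,i}$ coincide. Consequently the Hilbert complexes $(L^2\Omega^i(M,g),d_{\max,i})$ and $(L^2\Omega^i(M,g),d_{\min,i})$, and therefore the intermediate complex $(L^2\Omega^i(M,g),d_{\mathfrak{M},i})$ built in Theorem \ref{polcas}, depend only on the quasi-isometry class of $g$. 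Hence closedness of $\im(d_{\min,i})$ and the Fredholm property of $(L^2\Omega^i(M,g),d_{\mathfrak{M},i})$ are both quasi-isometry invariant.

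Next I would invoke Lapp's theorem (\cite{FLA}, Theorem 5.3.5, stated right above the corollary): under \eqref{raffipa} and \eqref{martiemati}, every closed extension of the Gauss-Bonnet operator $d+\delta$ with core domain $\Omega^*_c(M)$ is Fredholm on its graph-norm domain. Combining this with Lemma 2.3 of \cite{BL} (which identifies closed extensions of $d+\delta$ with closed extensions of the de Rham complex) and with Proposition \ref{worref} (equivalence between the Fredholm property of a Hilbert complex and of its associated operator $D+D^*$), one concludes that every closed extension $(L^2\Omega^i(M,g),D_i)$ of $(\Omega_c^i(M),d_i)$ is a Fredholm complex. In particular both $(L^2\Omega^i(M,g),d_{\min,i})$ and $(L^2\Omega^i(M,g),d_{\max,i})$ are Fredholm. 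By Proposition \ref{topoamotore}, this forces $\im(d_{\min,i})$ to be closed in $L^2\Omega^{i+1}(M,g)$ for every $i$, so the hypothesis of Theorem \ref{polcas} is satisfied and the intermediate complex $(L^2\Omega^i(M,g),d_{\mathfrak{M},i})$ exists.

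It remains to check that $(L^2\Omega^i(M,g),d_{\mathfrak{M},i})$ is itself Fredholm. By Theorem \ref{polcas} we have $\mathcal{D}(d_{\min,i})\subseteq \mathcal{D}(d_{\mathfrak{M},i})\subseteq \mathcal{D}(d_{\max,i})$, so $d_{\mathfrak{M},i}$ is a closed extension of $d_i:\Omega^i_c(M)\to\Omega^{i+1}_c(M)$, and by the preceding paragraph every such extension yields a Fredholm complex. This verifies the hypothesis of Theorems \ref{nuvoloso}, \ref{zarro}, \ref{giove}, \ref{eulerissimo} and \ref{luxlux}, and completes the proof. The only non-routine ingredient is Lapp's Fredholm theorem for the Gauss-Bonnet operator under conditions \eqref{raffipa}-\eqref{martiemati}, which is imported as a black box; everything else is a mechanical chain of implications already set up in the preceding sections.
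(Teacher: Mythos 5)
Your proposal is correct and follows essentially the same route as the paper: the paper's argument is precisely the paragraph preceding the corollary (Lapp's theorem, then Lemma 2.3 of \cite{BL} together with Proposition \ref{worref} to get that every closed extension of $(\Omega^i_c(M),d_i)$ is a Fredholm complex, then Proposition \ref{topoamotore} for closedness of $\im(d_{min,i})$), combined with the quasi-isometry invariance recalled at the start of the section. Your only additions are to spell out explicitly the quasi-isometry reduction and the observation that $(L^2\Omega^i(M,g),d_{\mathfrak{M},i})$ is itself a closed extension and hence Fredholm, both of which the paper leaves implicit.
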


Finally we conclude the paper with the following result. First of all we recall the definition of manifold with conical singularities:

\begin{defi}
Let $L$ be a  manifold. The truncated cone over $L$, usually labeled $C_{a}(L)$, is defined as 
\begin{equation}
\label{moria}
L\times [0,a)/(\{0\}\times L).
\end{equation}
\end{defi}

\begin{defi}
\label{gubbio}
A manifold with conical singularities $X$ is a metrizable, locally compact,  Hausdorff space such that there exists a sequence of points $\{p_{1},...,p_{n},...\}\subset X$ which satisfies the following properties:
\begin{enumerate}
\item $X\setminus\{p_{1},...,p_{n},...\}$ is a smooth  manifold.
\item For each $p_{i}$ there exists an open neighborhood $U_{p_i}$, a closed manifold $L_{p_i}$ and a  map $\chi_{p_i}:U_{p_i}\rightarrow C_{2}(L_{p_i})$ such that $\chi_{p_i}(p_i)=v$ and $ \chi_{p_i}|_{U_{p_i}\setminus\{p_{i}\}}:U_{p_i}\setminus\{p_i\}\rightarrow L_{p_i}\times (0,2)$  is a diffeomorphism. 
\end{enumerate}
\end{defi}

The regular and the singular part of $X$ are defined as $$\sing(X)=\{p_1,...,p_n,...\},\ \reg(X):=X\setminus \sing(X)=X\setminus \{p_1,...,p_n,...\}.$$The singular points $p_i$ are usually called \emph{conical points} and the smooth closed manifold $L_{p_i}$ is usually called the \emph{link} relative to the point $p_i$. If $X$  is compact then it is clear, from the above definition,  that the sequences of conical points $ \{p_1,...,p_n,...\}$ is made of isolated points  and therefore on $X$ there are just a finite number of conical points.\\Now we recall from \cite{ALMPI} a particular case, which is suitable for our purpose, of an important result which  describe a blowup process  to resolve the singularities.

\begin{prop} 
\label{palcano}
Let $X$ be a compact manifold with conical singularities. Then there exists a manifold with boundary $\overline{M}$ and a blow-down map $\beta:\overline{M}\rightarrow X$ which has the following properties: 
\begin{enumerate}
\item $\beta|_{M}:M\rightarrow \reg(X)$, where $M$ is the interior of $\overline{M}$, is a diffeomorphsim.
\item If $N$ is a connected component of $\partial \overline{M}$ and if  $U\cong  N\times [0,1)$ is a collar  neighborhood of $N$ then $\beta(U)=N\times [0,1)/(N\times \{0\})$. In particular $\beta(N)=p$ where $p$ is a conical point of $X$ and $N$ becomes one of the connected components of  the link of $p$.
\item If for each conical point $p_i$  the relative link $L_{p_i}$ is connected,  then there is a bijection between the conical points of $X$ and the connected components of $\partial \overline{M}.$
\end{enumerate}
\end{prop}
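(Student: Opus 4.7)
The plan is to construct the resolution $\overline{M}$ by an explicit "blow-up" procedure that replaces each conical neighborhood $U_{p_i}\cong C_2(L_{p_i})$ with a cylinder $L_{p_i}\times[0,1)$, and then to read off the three claimed properties from the construction. The key point is that the quotient map
\begin{equation*}
q:L\times[0,2)\longrightarrow L\times[0,2)/(L\times\{0\})=C_2(L)
\end{equation*}
is a diffeomorphism away from the zero slice and collapses that slice to the vertex, so reversing it gives the desired blow-down. Since $X$ is compact and the conical points are isolated, the set $\sing(X)=\{p_1,\dots,p_k\}$ is finite, and we may choose pairwise disjoint cone charts $\chi_{p_i}:U_{p_i}\to C_2(L_{p_i})$.

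First I would set $M:=\reg(X)$, which is a smooth manifold by Definition \ref{gubbio}, and form the topological space
\begin{equation*}
\overline{M}:=\Bigl(M\;\sqcup\;\bigsqcup_{i=1}^k L_{p_i}\times[0,1)\Bigr)\big/\!\sim,
\end{equation*}
where the equivalence relation identifies $(x,t)\in L_{p_i}\times(0,1)$ with $\chi_{p_i}^{-1}(q(x,t))\in M$. Because this identification is a diffeomorphism on open sets, $\overline{M}$ carries a unique smooth structure making the inclusions of $M$ and of each cylinder into smooth embeddings; its boundary is $\partial\overline{M}=\bigsqcup_i L_{p_i}\times\{0\}$. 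I would then define $\beta:\overline{M}\to X$ to be the identity on $M$ and, on each added cylinder, to be $\beta(x,t):=\chi_{p_i}^{-1}(q(x,t))$; on the boundary slice this sends the whole of $L_{p_i}\times\{0\}$ to the corresponding conical point $p_i$, and off the boundary it is compatible with the gluing identifications, so $\beta$ is well defined and continuous.

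To verify the three properties: property (1) holds tautologically, since $\beta$ is the identity on $M$ and $\beta(M)=\reg(X)$. For property (2), a connected boundary component $N\subset\partial\overline{M}$ agrees with some $L_{p_i}\times\{0\}$, and the added cylinder itself is the collar $U\cong N\times[0,1)$ on which $\beta$ factors through the quotient $q$; so $\beta(U)=N\times[0,1)/(N\times\{0\})$ and $\beta(N)=p_i$. Property (3) is then immediate, because if every link $L_{p_i}$ is connected then each added cylinder is connected, so the map $p_i\mapsto L_{p_i}\times\{0\}$ is a bijection between $\sing(X)$ and $\pi_0(\partial\overline{M})$.

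The main technical obstacle is checking that the smooth structure on $\overline{M}$ is genuinely well defined at the new boundary components. Once the cone charts $\chi_{p_i}$ are fixed, smoothness of the overlap maps is automatic since the gluing is performed along open subsets by a diffeomorphism; the only subtle point is that the local model $L\times[0,1)$ is a manifold with boundary, not without, so one must work in the category of smooth manifolds with boundary throughout. Different choices of cone charts yield diffeomorphic resolutions, but since the statement asserts only existence, this suffices.
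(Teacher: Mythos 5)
Your proposal is correct, but it takes a genuinely different route from the paper: the paper offers no argument of its own and simply cites Proposition 2.5 of \cite{ALMPI}, where the resolution is produced by the general blow-up procedure for smoothly stratified spaces. Your explicit gluing $\overline{M}=\bigl(M\sqcup\bigsqcup_{i} L_{p_i}\times[0,1)\bigr)/\!\sim$ is the depth-one special case of that machinery carried out by hand. What your approach buys is a self-contained, elementary proof using nothing beyond the cone charts of Definition \ref{gubbio} and the finiteness of $\sing(X)$ on a compact $X$; what the citation buys is generality (iterated blow-ups for stratified spaces of arbitrary depth, canonicity of the resolution, the induced boundary fibration structure), none of which is actually needed for this proposition.

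Two points you should tighten. First, in verifying property (2) you assert that a connected component $N$ of $\partial\overline{M}$ ``agrees with some $L_{p_i}\times\{0\}$''; this presumes the links are connected, which is hypothesized only in part (3). For (2), let $N$ be a connected component of $L_{p_i}\times\{0\}$, take the collar $N\times[0,1)$ inside the glued cylinder $L_{p_i}\times[0,1)$, and observe that $\beta$ collapses exactly $N\times\{0\}$ to $p_i$, so that $\beta(U)\cong N\times[0,1)/(N\times\{0\})$; this is also precisely what makes the clause ``$N$ becomes one of the connected components of the link of $p$'' come out correctly. Second, when you invoke the uniqueness of the smooth structure on the glued space, you should also record that $\overline{M}$ is Hausdorff (and second countable): gluing along open sets can in general produce non-Hausdorff spaces (the line with two origins), and here Hausdorffness holds because, given $z\in M$ and a boundary point $(x,0)\in L_{p_i}\times\{0\}$, disjoint neighborhoods of $z$ and $p_i$ in the Hausdorff space $X$ pull back under $\beta$ to disjoint neighborhoods in $\overline{M}$. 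With these repairs your construction is a complete and valid proof.
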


\begin{proof}
See \cite{ALMPI}, Proposition 2.5.
\end{proof}

Now we introduce a class of  riemannian  metrics  on these spaces.

\begin{defi}
\label{caiserra}
Let $X$ be a manifold with conical singularities. A conic metric $g$ on $\reg(X)$ is riemannian metric with the following property: for each  conical point $p_i$ there exists a map $\chi_{p_i}$, as defined in Definition \ref{gubbio},  such that \begin{equation}
\label{pianello}
(\chi_{p_i}^{-1})^*(g|_{U_{p_{i}}})=dx^2+x^2h_{L{p_{i}}}(x)
\end{equation}
 where $h_{L{p_{i}}}(x)$ depends smoothly on $x$ up to $0$ and for each fixed $x\in [0,1)$  it is a riemannian metric on $L_{p_{i}}.$ Analogously, if $\overline{M}$ is manifold with boundary and $M$ is its interior part, then $g$ is a conic metric on $M$ if it is a smooth, symmetric section of $T^{*}\overline{M}\otimes  T^*\overline{M}$, degenerate over the boundary, such that over a collar neighborhood $U$ of $\partial \overline{M}$, $g$ satisfies \eqref{pianello} with respect to some diffeomorphism $\chi:U\rightarrow [0,1)\times \partial \overline{M}.$
\end{defi}

Now consider again a compact and orientable manifold  $X$ with conical singularities such that $\reg(X)$ is endowed with a conic metric $g$.  Then from Definition \ref{gubbio}, Prop. \ref{palcano} and Def. \ref{caiserra} it is clear that Corollary \ref{coroll} applies to $(\reg(X), g)$. Moreover, as shown by Cheeger in \cite{JEC}, we have 
\begin{equation}
\label{orianaminor}
H^i_{2,max}(\reg(X),g)\cong I^{\underline{m}}H^i(X,\mathbb{R}),\ H^i_{2,min}(\reg(X),g)\cong I^{\overline{m}}H^i(X,\mathbb{R})
\end{equation} 

where $I^{\underline{m}}H^i(X,\mathbb{R})$ and  $I^{\overline{m}}H^i(X,\mathbb{R})$ are respectively the intersection cohomology groups of $X$ associated to the lower middle perversity and to the upper middle perversity. For the definition and the main properties of the intersection cohomology we refer to the fundamental papers \cite{GM} and \cite{GMA} or to the monographs \cite{BA} and \cite{KW}. Therefore we get the following corollary:
\begin{cor}
\label{marchionni}
Let $X$ be a compact and oriented manifold with conical singularities of  dimension $m$. Let $g$ be a conic metric on $\reg(X)$. Then: 
\begin{equation}
\label{birettina}
\psi_{L^2}(\reg(X),g)= \sum_{i=0}^m(-1)^i\dim(I^{\underline{m}}H^i(X,\mathbb{R}))-\sum_{i=0}^m(-1)^i\dim(I^{\overline{m}}H^i(X,\mathbb{R}))
\end{equation}
or equivalently
\begin{equation}
\label{bonabona}
\psi_{L^2}(\reg(X),g)=\left\{
\begin{array}{ll}
0\ &\ m\ is\ even\\
2\sum_{i=0}^m(-1)^i\dim(I^{\underline{m}}H^i(X,\mathbb{R}))\ &\ m\ is\ odd
\end{array}
\right.
\end{equation} 
Suppose now that $Y$ is another compact and oriented manifold with conical singularities. Let $h$ be a conic metric on $\reg(Y)$. Assume that $X$ and $Y$ are homeomorphic or that $X$ and $Y$ are equivalent through a  stratum preserving homotopy equivalences, (see \cite{KW} pag 62 for the definition of stratum preserving homotopy equivalences). Then: \begin{equation}
\label{mummis}
\psi_{L^2}(\reg(X),g)=\psi_{L^2}(\reg(Y),h).
\end{equation}
\end{cor}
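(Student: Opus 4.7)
The plan is to combine the abstract cohomological formula of Theorem \ref{zarro} with Cheeger's identification of the maximal and minimal $L^2$-de Rham cohomologies of a conic metric with the two middle-perversity intersection cohomologies of $X$. First I would verify that the hypotheses of the earlier corollaries are satisfied: by Definition \ref{caiserra} and Proposition \ref{palcano}, the regular part $\reg(X)$ is the interior of a compact manifold with boundary $\overline{M}$, and near each boundary component the metric $g$ takes the form $dx^2+x^2 h_L(x)$, which is a particular case of a conformally conic metric. Hence Corollary \ref{coroll} applies, so the Fredholm hypothesis needed by Theorem \ref{zarro} is met, $\im(d_{min,i})$ is closed for every $i$, and therefore
\begin{equation*}
\psi_{L^2}(\reg(X),g)=\chi_{2,M}(\reg(X),g)-\chi_{2,m}(\reg(X),g).
\end{equation*}

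Next I would translate each Euler characteristic on the right-hand side into intersection-cohomological data via the isomorphisms \eqref{orianaminor} recalled from \cite{JEC}, namely $H^i_{2,max}(\reg(X),g)\cong I^{\underline{m}}H^i(X,\mathbb{R})$ and $H^i_{2,min}(\reg(X),g)\cong I^{\overline{m}}H^i(X,\mathbb{R})$. Taking alternating sums of dimensions yields formula \eqref{birettina} immediately. The dichotomy \eqref{bonabona} then follows from the second equality of Theorem \ref{zarro}, which says that $\chi_{2,M}=\chi_{2,m}$ when $m$ is even and $\chi_{2,M}=-\chi_{2,m}$ when $m$ is odd; re-expressing $\chi_{2,M}$ through \eqref{orianaminor} produces the two stated cases.

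For the invariance statement \eqref{mummis} the strategy is to observe that once $\psi_{L^2}$ has been encoded purely in terms of the middle-perversity intersection cohomology of $X$, the conclusion reduces to a purely topological fact: the intersection cohomology groups of a compact oriented pseudomanifold are invariant under homeomorphisms and, more generally, under stratum-preserving homotopy equivalences. This is one of the central theorems of the Goresky--MacPherson theory, developed in \cite{GM} and \cite{GMA} and exposed for instance in \cite{BA} and \cite{KW}. Since the right-hand sides of \eqref{birettina} and \eqref{bonabona} depend only on these groups, they are unchanged when $(X,g)$ is replaced by any equivalent $(Y,h)$.

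The main obstacle is conceptual rather than computational: every analytic step is a direct invocation of a result already proved in the paper (Theorem \ref{zarro} and Corollary \ref{coroll}) or of a classical one (Cheeger's $L^2$-Hodge theorem and the topological/homotopy invariance of intersection cohomology). The points that still require care are verifying that Cheeger's isomorphisms respect the dimension-counting needed to pass from $\chi_{2,M/m}$ to the stated alternating sums, and checking that the invariance under stratum-preserving homotopy equivalence applies simultaneously to both middle perversities as required by \eqref{mummis}.
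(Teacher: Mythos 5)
Your proposal is correct and follows essentially the same route as the paper: invoke Corollary \ref{coroll} (via Definition \ref{caiserra} and Proposition \ref{palcano}) to guarantee that $\psi_{L^2}(\reg(X),g)$ is defined, combine Theorem \ref{zarro} with Cheeger's isomorphisms \eqref{orianaminor} to obtain \eqref{birettina} and \eqref{bonabona}, and deduce \eqref{mummis} from the topological/stratum-preserving homotopy invariance of intersection cohomology. The only difference is that you spell out the intermediate bookkeeping (passing from $\chi_{2,M}-\chi_{2,m}$ to the alternating sums) which the paper leaves implicit.
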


\begin{proof}
As remarked above we can apply Cor.\ref{coroll} to $(\reg(X),g)$. Therefore $\psi_{L^2}(\reg(X),g)$ exists. Now combining Theorem \ref{zarro} with \eqref{orianaminor} we get \eqref{birettina} and  \eqref{bonabona}. Finally \eqref{mummis} follows by the invariance under homeomorphisms or under stratum preserving homotopy equivalences of the intersection cohomology groups.
\end{proof}

We conclude pointing out that, in the context of compact and oriented manifold with conical singularities, $\psi_{L^2}(\reg(X),g)$, defined using a conic metric $g$ on $\reg(X)$, admits a \textbf{pure topological interpretation} in terms of intersection cohomology groups of $X$.

\begin{thebibliography}{99}
\bibitem {ALMP} 
P{.} Albin, E{.} Leichtnam, R{.} Mazzeo, P{.} Piazza.
\newblock Hodge theory on Cheeger spaces.
\newblock http://arxiv.org/abs/1307.5473

\bibitem {ALMPI}
 P{.} Albin, E{.} Leichtnam, R{.} Mazzeo, P{.} Piazza.
\newblock The signature package on Witt spaces.
\newblock  {\em Ann. Sci. \'Ec. Norm. Sup\'er.},  (4) 45 (2012), no. 2, 241--310.

\bibitem {BA}   
M{.} Banagl. 
\newblock Topological invariants of stratified spaces.
\newblock {\em Springer Monographs in Mathematics.},  Springer, Berlin, 2007.

\bibitem {FB}  
 F{.} Bei.
\newblock General perversities and $L^{2}$ de Rham and Hodge theorems on stratified pseudomanifolds.
\newblock  {\em Bull. Sci. Math.}, 138 (2014), no. 1,   2--40.

\bibitem {FBE} 
F{.} Bei.
\newblock Poincar\'e duality, Hilbert complex and geometric applications.
\newblock  {\em Adv.  Math.},  267, 2014, 121--175.

\bibitem {BGV}
N{.} Berline, E{.} Getzler, M{.} Vergne.
\newblock Heat kernels and Dirac operators.
\newblock  {\em Grundlehren Text Editions.},  Springer-Verlag, Berlin, 2004

\bibitem {BL} 
 J{.} Bruning, M{.} Lesch. 
\newblock Hilbert complexes.
\newblock  {\em J. Funct. Anal.}, 108 (1992), no. 1, 88--132. 
 
\bibitem {BrL} 
 J{.} Bruning.
\newblock M{.} Lesch, K\"ahler-Hodge theory of conformally complex cones
\newblock  {\em Geom. Funct. Anal.},  3 (1993), no. 5, 439--473.

\bibitem {BruL} 
 J{.} Bruning, M{.} Lesch.
\newblock On the spectral theory of complex algebraic curves.
\newblock  {\em J. Reine Angew. Math.},  474 (1996), 25--66.

\bibitem {BrLe}  
J{.} Bruning, M{.} Lesch.
\newblock The spectral rigidity of curve sigularities.
\newblock  {\em C. R. Acad. Sci. Paris S\'er.},  I Math. 319 (1994), no. 2, 181--185.

\bibitem {BPS} 
 J{.} Bruning, N{.}Peyerimhoff, H{.} Schr\"oder.
\newblock The $\overline{\partial}$ operator on algebraic curves.
\newblock  {\em Comm. Math. Phys.},  129 (1990), no. 3, 525--534.

\bibitem {C}  
J{.} Cheeger.
\newblock On the Hodge theory of Riemannian pseudomanifolds. Geometry of the Laplace operator (Proc. Sympos. Pure Math., Univ. Hawaii, Honolulu, Hawaii, 1979),
\newblock   {\em Proc. Sympos. Pure Math.}, XXXVI, pp. 91--146, Amer. Math. Soc., Providence, R.I., 1980.

\bibitem {JEC}
J{.} Cheeger.
\newblock On the spectral geometry of spaces with cone-like singularities.
\newblock  {\em Proc. Nat. Acad. Sci. U.S.A.},  76 (1979), no. 5, 2103--2106.


\bibitem {GM}
M{.} Goresky, R{.} MacPherson.
\newblock Intersection homology theory.
\newblock  {\em Topology}, 19 (1980), no. 2, 135--162.

\bibitem {GMA}  
M{.} Goresky, R{.} MacPherson.
\newblock Intersection homology II.
\newblock  {\em Invent. Math.},  72 (1983), no. 1, 77--129.

\bibitem {KW} 
F{.} Kirwan, J{.} Woolf.
\newblock An inroduction to intersection homology theory. Second Edition. 
\newblock {\em Chapman \& Hall/CRC}, Boca Raton, FL, 2006.

\bibitem{FLA} 
F{.} Lapp.
\newblock An index theorem for operators with horn singularities.
\newblock PhD Thesis, {\em Institut f\"ur Mathematik, Humboldt Universit\"at zu Berlin.},  See: http://edoc.hu-berlin.de/dissertationen/lapp-frank-2013-10-25/PDF/lapp.pdf.


\bibitem{LP} 
M{.} Lesch, N{.} Peyerimhoff.
\newblock On index formulas for manifolds with metric horns. 
 \newblock  {\em Comm. Partial Differential Equations},  23 (1998), no. 3-4, 649--684.

\bibitem {MN}  
 M{.} Nagase.
\newblock $L^{2}$-cohomology and intersection homology of stratified spaces.
\newblock  {\em Duke Math. J.},  50 (1983), no. 1, 329--368.

\bibitem {LS}     
 L{.} D{.} Saper.
\newblock $L^2$-cohomology of the Weil-Petersson metric.
\newblock {\em Contemp. Math.}, 150, Amer. Math. Soc., Providence, RI, 1993. 
\end {thebibliography}

\end{document}